\def\del{\delta}
\numberwithin{equation}{section}
\theoremstyle{plain}
\newtheorem{thm}{Theorem}[section]
\newtheorem{cor}[thm]{Corollary}
\newtheorem{lem}[thm]{Lemma}
\newtheorem{prop}[thm]{Proposition}
\newtheorem{defn}[thm]{Definition}
\newtheorem{exm}[thm]{Example}
\newtheorem{rem}[thm]{Remark}
\newcommand{\Hom}{\operatorname{Hom}\nolimits}
\newcommand{\End}{\operatorname{End}\nolimits}
\newcommand{\op}{\operatorname{op}\nolimits}
\renewcommand{\mod}{\mathsf{mod}\hspace{.01in}}
\newcommand{\M}{\mathcal M}
\newcommand{\B}{\mathcal B}
\newcommand{\uB}{\underline{\B}}
\newcommand{\U}{\mathcal U}
\newcommand{\V}{\mathcal V}
\newcommand{\A}{\mathcal A}
\newcommand{\W}{\mathcal W}
\newcommand{\h}{\mathcal H}
\newcommand{\s}{\mathcal S}
\newcommand{\T}{\mathcal T}
\newcommand{\D}{\mathcal D}
\newcommand{\N}{\mathcal N}
\newcommand{\X}{\mathscr X}
\newcommand{\C}{\mathcal C}
\newcommand{\E}{\mathbb E}
\newcommand{\EE}{\mathbb E}
\newcommand{\svecv}[2]{\left(\begin{smallmatrix}
      #1 \\
      #2
    \end{smallmatrix}\right)}
\newcommand{\svech}[2]{\left(\begin{smallmatrix}
      #1 & #2
\end{smallmatrix}\right)}
\renewcommand{\emph}{\textit}
\renewcommand{\phi}{\varphi}
\newcommand{\add}{\mathsf{add}\hspace{.01in}}
\begin{document}

\title[Largest exact structures and almost split sequences on hearts]{Largest exact structures and almost split sequences on hearts of twin cotorsion pairs}\footnote{\hspace{1em}Yu Liu is supported by the National Natural Science Foundation of China (Grant No. 12171397).  Wuzhong Yang is supported by the National Natural Science Foundation of China (Grant Nos. 12271321 and 11971384).
Panyue Zhou is supported by the Hunan Provincial Natural Science Foundation of China (Grant No. 2023JJ30008) and by the National Natural Science Foundation of China (Grant No. 11901190). }
\author{Yu Liu, Wuzhong Yang and Panyue Zhou}
\address{ School of Mathematics and Statistics, Shaanxi Normal University, 710062 Xi'an, Shannxi, P. R. China}
\email{recursive08@hotmail.com}

\address{School of Mathematics, Northwest University, 710127 Xi'an, Shaanxi, P. R. China}
\email{mengxiankun061@qq.com}

\address{School of Mathematics and Statistics, Changsha University of Science and Technology, 410114 Changsha, Hunan, P. R. China}
\email{panyuezhou@163.com}

\thanks{The authors would like to thank Professor Bin Zhu for helpful discussions.}
\begin{abstract}
Hearts of cotorsion pairs on extriangulated categories are abelian categories. On the other hand, hearts of twin cotorsion pairs are not always abelian. They were shown to be semi-abelian by Liu and Nakaoka. Moreover, Hassoun and Shah proved that they are quasi-abelian under certain conditions. In this article, we first show that the heart of any twin cotorsion pair has a largest exact category structure and is always quasi-abelian. We also provide a sufficient and necessary condition for the heart of a twin cotorsion pair being abelian. Then by using the results we have got, we investigate the almost split sequences in the hearts of twin cotorsion pairs.  Finally, as an application, we show that a Krull-Schmidt, Hom-finite triangulated category has a Serre functor whenever it has a cluster tilting object.
\end{abstract}
\keywords{twin cotorsion pair; heart; quasi-abelian; abelian;  almost split sequence;
Serre functor}
\subjclass[2020]{18E10; 18G80}
\maketitle

\section{Introduction}

Cluster tilting theory gives a way to construct abelian categories from triangulated categories and exact categories: one can pass from the original categories to abelian quotient categories by factoring out cluster tilting subcategories. On triangulated categories, such results were shown by Buan, Marsh and Reiten \cite[Theorem 2.2]{BMR} for cluster categories, by Keller and Reiten \cite[Proposition 2.1]{KR} for $2$-Calabi-Yau case and then by Koenig and Zhu \cite[Theorem 3.3]{KZ} for the general case. Demonet and Liu \cite[Theorem 3.2]{DL} proved an analogous result for exact categories with enough projectives.

The notion of cotorsion pair was first introduced by Salce \cite{Sa}. It was defined originally on abelian groups, and then on exact and triangulated categories.  All the results mentioned above have a unified cotorsion pair version, which is more general without losing too much information. We will explain in detail later.

On triangulated categories, the concept of cotorsion pair is an analog of torsion pair: $(\U,\V)$ is a cotorsion pair if and only if $(\U,\V[1])$ is a torsion pair. We recall the definition of cotorsion pairs on triangulated categories for the convenience of the readers:
Let $\U, \V$ be two full subcategories in a triangulated category $\C$ with shift functor $[1]$. The pair  $(\U,\V)$ is called a \emph{cotorsion pair} on $\C$ if the following conditions are satisfied:
\begin{enumerate}
 \item[(1)]  $\Hom_\C(\U,\V[1])=0$.
 \vspace{1mm}
\item[(2)] Any object $C\in \C$ admits a triangle $V\to U\to C\to V[1]$ with $U\in \U$ and $V\in \V$.
\end{enumerate}
$t$-structures and co-$t$-structures on $\C$ can be realized as special kinds of cotorsion pairs in the following way:
\begin{itemize}
\item[(i)] a cotorsion pair $(\U,\V)$ can be called a $t$-structure if $\U[1]\subseteq \U$;
\item[(ii)] a cotorsion pair $(\U,\V)$ can be called a co-$t$-structure if $\U[-1]\subseteq \U$.
\end{itemize}

Nakaoka \cite{N1} defined the hearts of cotorsion pairs, which is a generalization of the hearts of $t$-structures. He also generalized the well-known result for the hearts of $t$-structures in \cite{BBD}, showing that the hearts of cotorsion pairs are abelian.

The notion of extriangulated category was introduced by Nakaoka and Palu \cite{NP}, which is a simultaneous generalization of triangulated categories and exact categories. Cotorsion pairs \cite{NP}, their hearts (which are shown to be abelian) \cite{LN} and cluster tilting subcategories \cite{ZZ2} can be defined on extriangulated categories, which are generalizations of the same concepts on both triangulated categories and exact categories.

Any cluster tilting subcategory $\T$ on an extriangulated category $\B$ admits a cotorsion pair $(\T,\T)$, and the ideal quotient $\B/\T$ is just the heart of $(\T,\T)$, hence is abelian. This is the unified version of all the results we mentioned at the beginning.

In \cite{N2}, Nakaoka introduced a generalized concept called twin cotorsion pairs: a pair of cotorsion pairs $((\s,\T),(\U,\V))$ satisfying the condition $\s\subseteq \U$. Note that any cotorsion pair $(\U,\V)$ can be realized as a twin cotorsion pair $((\U,\V),(\U,\V))$. He then defined the hearts of twin cotorsion pairs. It is also a generalization of the hearts of cotorsion pairs in the sense that the heart of the twin cotorsion pair $((\U,\V),(\U,\V))$ is just the heart of $(\U,\V)$. Later these concepts were generalized to extriangulated categories \cite{NP,LN}. An interesting example of twin cotorsion pairs was hidden in \cite{BM}. In that paper, Buan and Marsh considered a rigid object $T$ on a Krull-Schmidt, Hom-finite triangulated category $\C$ with Serre functor. In fact, $T$ admits a twin cotorsion pair $((\add T[1],\X_T),(\X_T,\V))$ (where $\X_T:=\{M\in\C\mid {\rm Hom}_{\C}(T,M)=0\}$) and $\C/\X_T$ is the heart of this twin cotorsion pair. $\C/\X_T$ is not abelian in general, but they showed that the localization of  $\C/\X_T$ with respect to the regular morphisms on it is equivalent to $\mod \End_\C(T)^{\op}$. Another interesting example is the following: by the results in \cite{LN}, $n$-cluster ($n>2$) tilting subcategory $\T$ on an extriangulated category $\B$ with enough projectives and enough injectives induces a twin cotorsion pair $((\T,\M),(\M,\N))$.

Generally speaking, hearts of twin cotorsion pairs are not always abelian (see Example \ref{ex1} in Section 4). Some ``if and only if" conditions for the hearts being abelian are discussed for the twin cotorsion pairs induced by $n$-cluster ($n>2$) tilting subcategories (see \cite{L2} and \cite{HZ}). It was shown in \cite[Theorem 2.32]{LN} that the heart of any twin cotorsion pair is semi-abelian (same results were shown before for triangulated categories \cite{N2} and exact categories \cite{L1}). Later, Hassoun and Shah \cite[Theorem 4.2]{HS} proved that the hearts are quasi-abelian under certain conditions.

The notion of semi-abelian category was introduced by
Janelidze, M\'{a}rki and Tholen \cite{JMT} in order to capture typical
algebraic properties valid for groups, rings and algebras.
Semi-abelian categories provide a categorical approach to the isomorphism and decomposition theorems of group theory, to general theories of radicals and commutators, and to homology theory of non-abelian structures. 

Rump introduced a class of additive categories close to abelian
categories which are called quasi-abelian categories in \cite{R1} (note that Rump used the term ``almost abelian"
instead of ``quasi-abelian"). There are plenty of concrete quasi-abelian
categories, such as various categories of topological abelian groups, topological
vector spaces and lattices over orders.

\vspace{1mm}

We recall the definitions of semi-abelian category and quasi-abelian category.

\begin{defn}
An additive category is called preabelian if any morphism admits a kernel and a cokernel. A preabelian category $\mathcal A$ is called \emph{left semi-abelian} (resp. \emph{left quasi-abelian}) if in any pull-back diagram
$$\xymatrix{
A \ar[r]^{\mathbf{a}} \ar[d]_{\mathbf{b}} &B \ar[d]^{\mathbf{c}}\\
C \ar[r]_{\mathbf{d}} &D}
$$
the morphism $\mathbf{a}$ is an epimorphism (resp. a cokernel) whenever $\mathbf{d}$ is a cokernel. Dually we can define \emph{right semi-abelian} (resp. \emph{right quasi-abelian}) categories. $\mathcal A$ is called \emph{semi-abelian} (resp. \emph{quasi-abelian}) if it is both left and right semi-abelian (resp. quasi-abelian).
\end{defn}

By definition, any quasi-abelian category is semi-abelian. 
We have the following relation:
$$\{ \mbox{Abelian Cat.} \}\subset\{ \mbox{Quasi-Abelian Cat.} \}\subset\{ \mbox{Semi-Abelian Cat.} \}\subset\{ \mbox{Preabelian Cat.} \}.$$
For more details about these categories, see \cite{R1}. Note that semi-abelian category is not always quasi-abelian, since counterexamples have been found in \cite{BD,R2}. But how about the hearts of twin cotorsion pairs?
\vspace{1mm}

According to \cite[Theorem 3.3]{SW} (see \cite[Theorem 3.5]{C} for a generalized result), every preabelian category $\mathcal A$ has a maximal exact category structure, maximal means that all exact structures on $\mathcal A$ are contained within it. Since the hearts of twin cotorsion pairs are preabelian, they all have maximal exact structures. But we show that these maximal structures on the hearts are more special. This observation helps us to find that the hearts of twin cotorsion pairs are quasi-abelian. In fact, we have the following theorem.

\begin{thm}{\rm (see Theorem \ref{ck} and Theorem \ref{main} for details)}
Let $(\B,\EE,\mathfrak{s})$ be an extriangulated category. The heart of any twin cotorsion pair on $\B$ has a largest exact category structure in the sense that any kernel-cokernel pair is a short exact sequence. Moreover, this heart is quasi-abelian.
\end{thm}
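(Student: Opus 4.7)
My plan is to establish both claims together by exhibiting, inside the heart $\underline{\h}$ of a twin cotorsion pair $((\s,\T),(\U,\V))$, a class $\mathscr{E}$ of short sequences that is simultaneously an exact structure (in fact the largest one) and that certifies the category as quasi-abelian. Since by \cite{LN} the heart is preabelian, my candidate for $\mathscr{E}$ is the class of all kernel-cokernel pairs.

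First I would revisit the explicit descriptions from \cite{LN} of kernels and cokernels of morphisms in $\underline{\h}$: a morphism $\underline{f}:X\to Y$ of the heart lifts to some $f:X\to Y$ in $\B$, and kernel and cokernel can be read off by pasting together suitable extriangles built from the cotorsion-pair data and a realization of $f$. With these descriptions in hand I would show that every kernel-cokernel pair $X\xrightarrow{\underline{a}}Y\xrightarrow{\underline{b}}Z$ in $\underline{\h}$ actually arises (after choice of representatives) from an extriangle of $\B$ whose outer terms belong to the heart, so that the behaviour of such pairs under pullback and pushout in $\underline{\h}$ is controlled by the behaviour of extriangles in $\B$.

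The central step is then to verify the quasi-abelian axiom: a pullback of a cokernel is a cokernel and, dually, a pushout of a kernel is a kernel. For this I would take a cokernel $\underline{b}:Y\to Z$ in $\underline{\h}$ together with a morphism $\underline{d}:W\to Z$, lift the data to $\B$, and invoke the axioms of an extriangulated category (in particular (ET4) and its dual, together with the pullback/pushout construction for extriangles) to assemble an extriangle whose image in $\underline{\h}$ realises the pullback and whose middle deflation projects to the desired cokernel. Once this and its dual are in place, $\mathscr{E}$ is closed under pullbacks and pushouts; combined with the routine closure of kernels and cokernels under composition in any preabelian category, this makes $\mathscr{E}$ into an exact structure. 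Maximality is immediate, since in any exact structure on a preabelian category the inflations and deflations are automatically kernels and cokernels, so every conflation is already a kernel-cokernel pair.

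The hardest part will be the diagrammatic work in the pullback/pushout step. The heart is a subquotient of $\B$, so one has to pick representatives of morphisms that are simultaneously compatible with several extriangles, confirm that the putative pullback square descends from $\B$ to an actual pullback in $\underline{\h}$, and check that the kernel and cokernel produced by the extriangulated construction coincide with the intrinsic ones in the heart. Once that bookkeeping is handled, the quasi-abelian statement follows at once from the standard characterization of quasi-abelian preabelian categories as those in which the class of all kernel-cokernel pairs is stable under pullback and pushout.
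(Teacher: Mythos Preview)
Your proposal is broadly on the right track and shares with the paper the idea of lifting kernel--cokernel pairs in $\underline{\h}$ to extriangles in $\B$, but the logical architecture and the key technical device differ substantially.

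The paper does \emph{not} verify the quasi-abelian axiom (pullback-stability of cokernels) directly. It proceeds in the opposite order: first it constructs an exact structure $\underline{\mathfrak{F}}$ on $\underline{\h}$, and only afterwards deduces quasi-abelianness as a three-line corollary. The construction goes through an intermediate extriangulated substructure $\EE_1\subseteq\EE$ on $\B$ consisting of those $\EE$-triangles whose inflation is $\W$-monic and whose deflation is $\W$-epic; this is the key device your outline is missing. Restricting $\EE_1$ to $\h$ gives an extriangulated structure $\mathbb{F}$ on $\h$ in which every object of $\W$ is projective--injective, and then the general machinery of \cite{NP} and \cite{ZZ1} produces an exact structure $\underline{\mathfrak{F}}$ on the quotient $\underline{\h}=\h/\W$ for free, with all exact-category axioms (pullback/pushout stability, composition closure) coming automatically. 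The substantive work (Theorem~\ref{ck}) is then to show that \emph{every} kernel--cokernel pair in $\underline{\h}$ already lies in $\underline{\mathfrak{F}}$. Once that is done, the pushout of any kernel is an inflation in $(\underline{\h},\underline{\mathfrak{F}})$, hence again a kernel, and quasi-abelian follows immediately (Theorem~\ref{main}).

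Your plan instead tries to check pullback/pushout stability of kernel--cokernel pairs directly by lifting a square to $\B$ and descending. This is not hopeless, but without the $\W$-monic/$\W$-epic condition you have no mechanism guaranteeing that the extriangle you build in $\B$ has its middle term in $\h$ (cf.\ Lemma~\ref{exten}) or that its image in $\underline{\h}$ is again a kernel--cokernel pair (cf.\ Lemma~\ref{ick}); the ``bookkeeping'' you anticipate would force you to rediscover exactly this condition. There is also a genuine error: your claim that closure of kernels and cokernels under composition is ``routine'' in any preabelian category is false. This fails in general preabelian (even semi-abelian) categories and is essentially part of what distinguishes quasi-abelian ones. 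In the paper's ordering the issue never arises, because composition closure is already part of the exact-category axioms for $\underline{\mathfrak{F}}$, obtained from general theory before one ever examines arbitrary kernel--cokernel pairs.
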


Although the hearts of twin cotorsion pairs are quasi-abelian, as mentioned above, they are not always abelian. We find that if the heart of a twin cotrsion pair $((\s,\T),(\U,\V))$ is abelian, it equals to the intersection of the heart of $(\s,\T)$ and the heart of $(\U,\V)$ (see Proposition \ref{prop1} for details). Moreover, By using the results we have got, we provide a sufficient and necessary condition for the heart of a twin cotorsion pair being abelian. The second main result is the following.

\begin{thm}{\rm (see Theorem \ref{main2} for details)}
Let $(\B,\EE,\mathfrak{s})$ be an extriangulated category and $((\s,\T),(\U,\V))$ be a twin cotorsion pair. The heart of $((\s,\T),(\U,\V))$ is abelian if and only if any epimorphism $\alpha: B\to C$ in the heart admits an $\EE$-triangle
$$\xymatrix{B' \ar[r]^-{p'} &C'\ar[r] &S \ar@{-->}[r] &}$$
such that $S\in \s$ and the image of $p'$ in the heart equals to $\alpha$.
\end{thm}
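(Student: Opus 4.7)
The plan is to use the preceding theorem, which ensures that the heart $\underline{\h}$ of $((\s,\T),(\U,\V))$ is quasi-abelian, and to reduce abelianness to a tractable cokernel condition. Since a quasi-abelian category is abelian precisely when every epimorphism is a cokernel (equivalently, when every morphism is strict), the task is to recognize this cokernel property in terms of the ambient extriangulated data of $\B$ and, in particular, of the cotorsion pair $(\s,\T)$.

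For the ``if'' direction, let $\alpha\colon B\to C$ be an epimorphism in $\underline{\h}$ and invoke the hypothesis to obtain an $\EE$-triangle $B'\to C'\to S$ with $S\in\s$ such that the image of $p'\colon B'\to C'$ in $\underline{\h}$ equals $\alpha$. Because $S\in\s$ and $\s\subseteq\U$, such an $\EE$-triangle should descend under the quotient functor $\B\to\underline{\h}$ to a cokernel sequence, by essentially the same mechanism used to establish semi-abelianness in \cite[Theorem 2.32]{LN} together with the largest-exact-structure result just proved. Consequently $\alpha=\underline{p'}$ is a cokernel in $\underline{\h}$, and combined with quasi-abelianness this forces $\underline{\h}$ to be abelian.

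For the ``only if'' direction, assume $\underline{\h}$ is abelian and let $\alpha\colon B\to C$ be an epimorphism. Since the heart is abelian, $\alpha$ is a cokernel. Lifting data to $\B$, one obtains an $\EE$-triangle whose associated morphism in the heart is $\alpha$, but whose third term is in general only known to lie in $\T$. The key step is to modify this data using $(\s,\T)$: resolve the third term by an object of $\s$ via the cotorsion pair axioms, form the induced pullback square in the extriangulated sense, and extract a new $\EE$-triangle $B'\to C'\to S$ with $S\in\s$. The hypothesis that $\alpha$ is a cokernel (rather than merely an epimorphism) is used at this stage to ensure that the correction does not alter the image in $\underline{\h}$, so that $\underline{p'}=\alpha$ is preserved.

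The main obstacle is precisely this last construction: producing an $\EE$-triangle with third term in $\s$ from the categorical statement that $\alpha$ is a cokernel. This will require a careful application of the extriangulated analogue of the octahedron axiom to splice the $\EE$-triangle arising from a kernel-cokernel presentation of $\alpha$ with a resolution $\EE$-triangle provided by $(\s,\T)$, together with the fact that morphisms factoring through the defining ideal (essentially $\W=\T\cap\U$) vanish in the heart. The bookkeeping between the various induced $\EE$-triangles and their images under the quotient functor is where the proof will be most delicate.
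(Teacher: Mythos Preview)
Your overall strategy matches the paper's: reduce abelianness to the criterion that every epimorphism is a cokernel (the paper records this as Remark~\ref{cond}, using only that $\underline{\h}$ is preabelian), and then characterize cokernels in $\underline{\h}$ as exactly those morphisms arising from an $\EE$-triangle $B'\to C'\to S$ with $S\in\s$.

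There is, however, a genuine mislocation of the key step in your ``only if'' plan. When $\underline{p}$ is a cokernel, the kernel--cokernel pair lifts via Theorem~\ref{ck} to an $\mathbb{F}$-triangle $A'\xrightarrow{i'} B'\xrightarrow{p_1} C_1$ with \emph{all three} terms in $\h$; the third term is not ``only known to lie in $\T$'', and there is no evident lift producing a third term in $\T$. The relevant resolution is on the \emph{first} term: since $A'\in\h\subseteq\B^-$, there is an $\EE$-triangle $A'\to W\to S$ with $W\in\W$ and $S\in\s$. Because $i'$ is $\W$-monic (this is built into the definition of $\mathbb{F}$-triangle), the map $A'\to W$ factors through $i'$, and one then \emph{pushes out} along $i'$ (dual of Lemma~\ref{L4}) to obtain the desired $\EE$-triangle $B'\to C_1\oplus W\to S$. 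Your proposed operation (resolve the third term by $\s$ and pull back) is the wrong handle: even if you had a third term $D$ and a resolution $T_D\to S_D\to D$ from the cotorsion pair $(\s,\T)$, the resulting pullback would insert an object of $\T$ (not of $\W$) between $C'$ and the new middle term, and there is no reason this becomes invisible in $\underline{\h}$ or lands in $\h$.

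For the ``if'' direction your sketch is correct in spirit but too vague. The paper's argument is concrete and symmetric to the above: given $B'\xrightarrow{p'} C'\to S$ with $S\in\s$, use $C'\in\h\subseteq\B^+$ to take an $\EE$-triangle $V'\to W'\to C'$ with $V'\in\V$, $W'\in\W$; pulling back produces $A'\to B'$ with $A'\in\h$, and one checks (exactly as in the proof of Theorem~\ref{ck}) that $A'\to B'\oplus W'\to C'$ is an $\mathbb{F}$-triangle, whence $\underline{p'}$ is a cokernel.
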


Auslander-Reiten theory, which was established by series of important works \cite{AR1, AR2}, is crucial for the representation theory of algebra. Almost split sequence plays a very important role in this theory. Later, many results in the theory were shown on more abstract categories, for example, arbitrary abelian categories. Recently, Auslander-Reiten theory for extriangulated categories has been established in \cite{INP}, we can discuss almost split sequences on extriangulated categories. Since the hearts of twin cotorsion pairs are now shown to be quasi-categories (they have largest exact structures and hence are also extriangulated categories), moreover, the exact structures of the hearts are inherited from the original categories, by using the theory we have shown, we can find the relation between the almost split sequences in the hearts and the original categories. (Note that Auslander-Reiten theory in quasi-abelian categories has been investigated in \cite{Sh}, but in this paper, we will focus on the relation between almost split sequences in different categories.)

\begin{thm}{\rm (see Definition \ref{as} and Theorem \ref{main3} for details)}
Let $(\B,\EE,\mathfrak{s})$ be a Krull-Schmidt extriangulated category and $((\s,\T),(\U,\V))$ be a twin cotorsion pair. There exists a one-to-one correspondence between the almost split sequences in the heart of $((\s,\T),(\U,\V))$ and the almost split sequences in a subcategory of $(\B,\EE,\mathfrak{s})$ related to this heart.
\end{thm}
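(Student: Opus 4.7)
The plan is to leverage the description of the heart $\h$ of the twin cotorsion pair $((\s,\T),(\U,\V))$ as an ideal quotient $\C/[\N]$, where $\C$ is the intersection of the ``positive'' and ``negative'' cones cut out by the twin cotorsion pair and $[\N]$ is the ideal of morphisms in $\C$ factoring through a class of objects that vanish in $\h$. The subcategory of $\B$ referred to in the statement will be $\C$ itself, equipped with the exact structure whose conflations are the $\EE$-triangles all of whose terms lie in $\C$. By Theorem~\ref{ck} and Theorem~\ref{main}, this exact structure on $\C$ corresponds under the quotient functor $\pi\colon\C\to\h$ to the largest exact structure on $\h$, and the Krull--Schmidt hypothesis lifts from $\B$ to both $\C$ and $\h$, so $\pi$ induces a bijection between isoclasses of indecomposables of $\C$ whose image in $\h$ is nonzero and isoclasses of nonzero indecomposables of $\h$.

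First I would verify that $\pi$ preserves and reflects the relevant data: $\EE$-triangles with all three terms in $\C$ descend to short exact sequences in $\h$, every short exact sequence in $\h$ admits a lift of this form, and an object $X\in\C$ with $\pi(X)\neq 0$ has a local endomorphism ring if and only if $\pi(X)$ does, because $[\N]$ restricted to such $X$ is contained in the radical of $\End_\C(X)$. Next I would establish the forward direction: given an almost split sequence in $\C$ realised by an $\EE$-triangle $X\to Y\to Z\dashrightarrow$, its image in $\h$ is a non-split short exact sequence with indecomposable end terms, and the right almost split property is translated by lifting any non-retraction $W\to\pi(Z)$ in $\h$ to a morphism $\widetilde{W}\to Z$ in $\C$, factoring it through $Y\to Z$ by almost-splitness in $\C$, and then pushing the factorization back down through $\pi$. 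The left almost split property is dual.

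For the reverse direction, I take an almost split sequence in $\h$, fix a representative $\EE$-triangle with all terms in $\C$ (which exists by the lifting established above), and discard any direct summands of the end terms that lie in $\N$, which is harmless by Krull--Schmidt since they vanish in $\h$. The main obstacle will be ensuring that a lift of a factorization in $\h$ is an honest factorization in $\C$ and not merely one modulo $[\N]$: a naive lift of a relation $h=\pi(g)\pi(p)$ in $\h$ only produces $\widetilde{h}-gp\in[\N]$ in $\C$, and one must correct $g$ by a morphism factoring through $\N$ in order to absorb this error. Resolving this requires the $\Hom$-vanishing built into the axioms of a cotorsion pair, which allows morphisms into or out of objects of $\N$ to be propagated along the $\EE$-triangle structure of the almost split sequence. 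Once this technical point is settled, the mutual inverseness of the two constructions follows by tracking representatives, and the uniqueness-up-to-isomorphism of almost split sequences delivers the one-to-one correspondence.
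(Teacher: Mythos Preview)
Your overall plan---identify an extriangulated structure on $\C$ (the paper's $\h$) in which $\N$ (the paper's $\W$) is projective--injective, then transport almost split sequences along the quotient $\pi\colon\C\to\C/[\N]$---matches the paper's strategy. The gap is in your choice of exact structure on $\C$. You take the conflations to be ``the $\EE$-triangles all of whose terms lie in $\C$'', but the paper explicitly notes that $\h$ is \emph{not} extension closed in $(\B,\EE,\mathfrak{s})$ in general, so this class need not form an extriangulated (or exact) structure, and even where it does, $\W$ is not projective--injective for it. The correct structure is the paper's $\mathbb{F}$: one restricts to those $\EE$-triangles $A\xrightarrow{x}B\xrightarrow{y}C\dashrightarrow$ with $x$ $\W$-monic and $y$ $\W$-epic. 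Only with this extra condition does one get (i) that $\h$ is closed under such extensions, (ii) that $\W$ is projective--injective, and (iii) that the image $A\xrightarrow{\underline{x}}B\xrightarrow{\underline{y}}C$ is a kernel--cokernel pair in $\underline{\h}$.

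This also affects your error-correction step. You attribute the absorption of the discrepancy $\widetilde{h}-gp\in[\N]$ to ``$\Hom$-vanishing built into the axioms of a cotorsion pair'', but the mechanism the paper uses is precisely the $\W$-monic/$\W$-epic property of the chosen conflation: if $a-dx'$ factors through $\W$ and $x'$ is $\W$-monic, then $a-dx'$ factors through $x'$, giving the corrected lift. Without the $\W$-monic hypothesis this step fails. Similarly, your lifting of short exact sequences from $\underline{\h}$ (``fix a representative $\EE$-triangle with all terms in $\C$ \ldots\ and discard any direct summands of the end terms that lie in $\N$'') is too casual: the paper's Theorem~\ref{ck} and Proposition~\ref{P2}(a) perform a nontrivial diagram chase to produce an $\mathbb{F}$-triangle (not merely an $\EE$-triangle in $\h$) with the prescribed end terms, and this construction is what guarantees the $\W$-monic/$\W$-epic conditions needed downstream.
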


\medskip

As an application of our results, we go back to a triangulated category with a cluster tilting object. As mentioned at the beginning, for a Krull-Schmidt, Hom-finite triangulated category $\C$ with a cluster tilting subcategory $\T$, we have an abelian ideal quotient $\C/\T$. If $\T=\add T$, then we have $\C/\T\simeq \mod \End_\C(T)$, hence $\C/\T$ has Auslander-Reiten sequences. Based on these facts, by using the results we have got, we show the following theorem.

\begin{thm}
Let $\C$ be a Krull-Schmidt, Hom-finite, $k$-linear triangulated category. If $\C$ has a cluster tilting object, then $\C$ has a Serre functor.
\end{thm}

This paper is organized as follows. In Section 2, we review some elementary concepts and properties of extriangulated categories.
In Section 3, we prove that the hearts of twin cotorsion pairs have largest exact category structures and are quasi-abelian. In Section 4, we provide a sufficient and necessary condition when the hearts of twin cotorsion pairs become abelian. In Section 5, we investigate the relation between the almost split sequences in the hearts and $(\B,\EE,\mathfrak{s})$. In Section 6, we show some results associated with  a cluster tilting object on a triangulated category as an application.

\section{Preliminaries}
In this section, we collect  some terminologies and basic properties of extriangulated categories
which we need later. They will also help the readers to have an overlook at extriangulated categories. We omit the precise definition (\cite[Definition 2.12]{NP}), and for more details, we refer to \cite[Sections 2 and 3]{NP}.

An extriangulated category is an additive category $\B$ equipped with an additive bifunctor
$$\mathbb{E}: \B^{\rm op}\times \B\rightarrow \mathbf{Ab},$$
where $\mathbf{Ab}$ is the category of abelian groups and a correspondence $\mathfrak{s}$.

For any pair of objects $A,C\in\B$, any element $\del\in\EE(C,A)$ is called an $\EE$-extension. For any morphisms $a\in\Hom_\B(A,A')$ and $c\in\Hom_\B(C',C)$,  $\EE(C,a)(\del)\in\EE(C,A')\ \ \text{and}\ \ \EE(c,A)(\del)\in\E(C',A)$ are simply denoted by $a_{\ast}\del$ and $c^{\ast}\del$, respectively. Let $\del\in \EE(C,A)$ and $\del'\in \EE(C',A')$ be any pair of $\E$-extensions. A {\it morphism} $(a,c)\colon\del\to{\delta}{'}$ of extensions is a pair of morphisms $a\in\Hom_\B(A,A')$ and $c\in\Hom_\B(C,C')$ in $\B$, satisfying the equation
$a_{\ast}\del=c^{\ast}{\delta}{'}$.

We say two sequences $A\xrightarrow{x} B\xrightarrow{y} C$, $A\xrightarrow{x'} B'\xrightarrow{y} C'$ are equivalent if they admit a commutative diagram.
$$\xymatrix@C=1cm@R=0.3cm{
&B \ar[dr]^y \ar[dd]^-{\cong}\\
A \ar[dr]_{x'} \ar[ur]^x &&C\\
&B' \ar[ur]_{y'}
}
$$
The sequences which are equivalent to each other form an equivalence class, we use $[A\xrightarrow{x} B\xrightarrow{y} C]$ to denote such class.

In extriangulated category $\B$, every $\EE$-extension $\delta\in\mathbb{E}(C, A)$ associates with an equivalence class of sequences by the correspondence $\mathfrak{s}$:
$$\mathfrak{s}(\delta)=\xymatrix@C=0.8cm{[A\ar[r]^x &B\ar[r]^y&C]}.$$
Moreover, if we have two extensions $\del\in \EE(C,A)$, $\del'\in \EE(C',A')$ such that
$$\mathfrak{s}(\delta)=\xymatrix@C=0.8cm{[A\ar[r]^x &B\ar[r]^y&C]},\quad \mathfrak{s}(\delta')=\xymatrix@C=0.8cm{[A'\ar[r]^{x'} &B'\ar[r]^{y'}&C']}$$
and an equation $a_{\ast}\del=c^{\ast}{\delta}{'}$, then we can get the following commutative diagram:
$$\xymatrix{
A\ar[r]^x \ar[d]_a &B\ar[r]^y \ar[d]^{\exists ~b} &C \ar[d]^c\\
A'\ar[r]_{x'} &B'\ar[r]_{y'}&C'.
}$$

\smallskip

\begin{defn}\label{dein}\cite[Definitions 2.17, 2.19 and 3.23]{NP}
Let $(\B,\EE,\mathfrak{s})$ be an extriangulated category.
\begin{itemize}
\setlength{\itemsep}{2.5pt}
\item[{\rm (1)}] Let $\del\in\EE(C,A)$. If $\mathfrak{s}(\delta)=[A\xrightarrow{~x~}B\xrightarrow{~y~}C]$, we call the following sequence
$$A\overset{x}{\longrightarrow}B\overset{y}{\longrightarrow}C\overset{\delta}{\dashrightarrow}$$
 an $\EE$-triangle. We also say $\delta$ is realized by $A\overset{x}{\longrightarrow}B\overset{y}{\longrightarrow}C$. We can omit $``\delta"$ in the $\EE$-triangle if it is not used in the argument.

\item[{\rm (2)}] An object $P\in\B$ is called {\it projective} if
for any $\EE$-triangle $A\overset{x}{\longrightarrow}B\overset{y}{\longrightarrow}C \dashrightarrow$ and any morphism $c:P\to C$, there exists a morphism $b:P\to B$ satisfying $yb=c$. Dually we can define injective objects. An object is called  a projective-injective object if it is both projective and injective.
%

\item[{\rm (3)}] Let $\s$ be a subcategory of $\B$. We say $\s$ is {\it extension closed}
if in any $\EE$-triangle $A\rightarrow B\rightarrow C\dashrightarrow$ with $A,C\in\s$, we have $B\in\s$.

\end{itemize}
\end{defn}

The following property is very important for extriangulated categories (see \cite[Proposition 3.3]{NP} for details).

\begin{prop}\label{exs}
Let $(\B,\EE,\mathfrak{s})$ be an extriangulated category. For any $\EE$-triangle
$$A\overset{x}{\longrightarrow}B\overset{y}{\longrightarrow}C\dashrightarrow,$$
we have the following two short exact sequences:
\begin{itemize}
\item[(1)] $\Hom_\B(C,-)\xrightarrow{(y,-)}\Hom_\B(B,-)\xrightarrow{(x,-)}\Hom_\B(A,-)\longrightarrow \EE(C,-)$;
\item[(2)] $\Hom_\B(-,A)\xrightarrow{(-,x)}\Hom_\B(-,B)\xrightarrow{(-,y)}\Hom_\B(-,C)\longrightarrow \EE(-,A)$.
\end{itemize}
\end{prop}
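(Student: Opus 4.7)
The plan is to prove both exactness statements by direct invocation of the extriangulated-category axioms recalled in \cite[Definition 2.12]{NP}; sequence (2) follows from (1) by the dual axioms (ET3)$^{\op}$ and (ET4)$^{\op}$, so I will sketch only (1). Write $\delta\in\EE(C,A)$ for the extension realized by $A\xrightarrow{x}B\xrightarrow{y}C$, and for each fixed $X\in\B$ define the connecting map $\Hom_\B(A,X)\to\EE(C,X)$ by $f\mapsto f_{\ast}\delta$; I must check $(x,-)\circ(y,-)=0$ together with exactness at $\Hom_\B(B,X)$ and $\Hom_\B(A,X)$.

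The first step is to verify that $y\circ x=0$, which gives $(x,-)\circ(y,-)=0$. This is obtained by exhibiting $(1_A,0)$ as a morphism of extensions from the trivial extension $0\in\EE(C,A)$, realized by $A\xrightarrow{\svecv{1}{0}}A\oplus C\xrightarrow{\svech{0}{1}}C$, to $\delta$; applying the morphism-of-extensions axiom produces a middle map $b:A\oplus C\to B$ with $b\circ\svecv{1}{0}=x$ and $y\circ b=0$, whence $yx=y\circ b\circ\svecv{1}{0}=0$.

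Next I would establish exactness at $\Hom_\B(B,X)$: given $g:B\to X$ with $gx=0$, applying (ET3) to a morphism of extensions from $\delta$ to the split extension in $\EE(C,X)$ produces a middle map whose commutativity with $y$ yields the desired factorization $g=h\circ y$. Exactness at $\Hom_\B(A,X)$ is the crux, and rests on the equivalence, valid for any $f:A\to X$: $f_{\ast}\delta=0$ if and only if $f$ factors through $x$. The ``if'' direction is immediate from the identity $x_{\ast}\delta=0$ (pushing $\delta$ out along its own first map yields the split extension $B\to B\oplus C\to C$), giving $(g\circ x)_{\ast}\delta=g_{\ast}(x_{\ast}\delta)=0$ for any $g:B\to X$. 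The ``only if'' direction uses the realization axiom: when $f_{\ast}\delta=0$, the realization of $f_{\ast}\delta$ splits, and composing the morphism of extensions $(f,1_C):\delta\to f_{\ast}\delta$ with such a splitting produces the required lift $B\to X$ through which $f$ factors.

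The main obstacle is not any individual diagram chase but the careful bookkeeping of the realization and morphism-of-extensions axioms. The equivalence ``$f_{\ast}\delta=0\Leftrightarrow f$ factors through $x$'' is the real engine of the argument; once it is in hand, every other piece of exactness---including the analogous piece for sequence (2) obtained by replacing pushouts with pullbacks and connecting morphism $g\mapsto g^{\ast}\delta$---follows routinely.
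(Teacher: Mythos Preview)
The paper does not prove this proposition itself; it merely records it and cites \cite[Proposition~3.3]{NP}. Your sketch is essentially the argument found there, and it is correct in spirit.

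The one place to tighten is exactness at $\Hom_\B(B,X)$. Axiom (ET3) takes a commuting \emph{left} square between two $\EE$-triangles and returns the \emph{third} map; it does not take a morphism of extensions and return a middle map (that is the compatibility of $\mathfrak{s}$). The clean set-up is to pair $\delta$ with the split $\EE$-triangle $0\to X\xrightarrow{1_X}X$ realizing $0\in\EE(X,0)$, rather than with a split extension in $\EE(C,X)$: the left square given by $0\colon A\to 0$ and $g\colon B\to X$ commutes because $gx=0$, and (ET3) then supplies $h\colon C\to X$ with $1_X\cdot g=h\circ y$, i.e.\ $g=hy$. Using the split extension $X\to X\oplus C\to C$ in $\EE(C,X)$ as you wrote does not directly yield the factorization. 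Your treatment of $yx=0$ and of the key equivalence ``$f_\ast\delta=0$ iff $f$ factors through $x$'' is correct, so with this small adjustment the argument is complete and matches the source the paper cites.
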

\medskip

\begin{rem}\cite[Remark 2.18]{NP}\label{rem}
Any extension closed subcategory $\M$ of an extriangulated category $(\B,\EE,\mathfrak{s})$ has a natural extriangulated category structure $(\M,\EE|_{\M},\mathfrak{s}|_{\M})$  inheriting from $(\B,\EE,\mathfrak{s})$,
where $\EE|_{\M}$ is the restriction of $\EE$ onto ${\M^{\rm op}\times \M}$
and  $\mathfrak{s}|_{\M}$ is the restriction of $\mathfrak{s}$  onto $\M$.
\end{rem}

Note that any extension closed subcategory of a triangulated category is an extriangulated category and any exact category is also an extriangulated category. Since we also need to deal with exact categories later, we recall some concepts about exact categories (there are several notions of exact categories, the one we use in this article attributed to Quillen \cite{Q}).

Let $\mathcal A$ be an additive category. A sequence $A\xrightarrow{x} B\xrightarrow{y} C$ in $\mathcal A$ is called a \emph{kernel-cokernel pair} if $x$ is the kernel of $y$ and $y$ is the cokernel of $x$. An exact category structure on $\mathcal A$ is a pair $(\mathcal A,\mathfrak{S})$ where $\mathfrak{S}$ is a class of kernel-cokernel pairs closed under isomorphisms, satisfying certain axioms (see \cite[Definition 2.1]{Bu} for details). We call a kernel-cokernel pair in $\mathfrak{S}$ a \emph{short exact sequence}.  We say $(\mathcal A,\mathfrak{S})$ is the largest exact category structure if any kernel-cokernel pair belongs to $\mathfrak{S}$.



In this article, let $(\B,\EE,\mathfrak{s})$ be an extriangulated category.
When we say that $\C$ is a subcategory of $\B$, we always assume that $\C$ is full and closed under isomorphisms.

We recall the definition of a cotorsion pair on $\B$.

\begin{defn}\cite[Definition 2.1]{NP}
Let $\U$ and $\V$ be two subcategories of $\B$ which are closed under direct summands. We call $(\U,\V)$ a \emph{cotorsion pair} if the following conditions are satisfied:
\begin{itemize}
\item[(a)] $\EE(\U,\V)=0$.
\smallskip

\item[(b)] For any object $B\in \B$, there exist two $\EE$-triangles
\begin{align*}
V_B\rightarrow U_B\rightarrow B{\dashrightarrow},\quad
B\rightarrow V^B\rightarrow U^B{\dashrightarrow}
\end{align*}
satisfying $U_B,U^B\in \U$ and $V_B,V^B\in \V$.
\end{itemize}

\end{defn}

By the definition of a cotorsion pair, we can conclude the following result.

\begin{lem}\label{L1}\cite[Remark 2.2]{LN}
Let $(\U,\V)$ be a cotorsion pair in $\B$.
\begin{itemize}
\item[(a)] $\V=\{ X\in \B \text{ }|\text{ } \EE(\U,X)=0\}$.
\item[(b)]  $\U=\{ Y\in \B \text{ }|\text{ } \EE(Y,\V)=0\}$.
\item[(c)] $\U$ and $\V$ are closed under extensions.

\end{itemize}
\end{lem}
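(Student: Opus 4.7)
My plan is to prove the three parts in order, deriving (c) from (a) and (b).

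For (a), the inclusion $\V\subseteq\{X\in\B\mid\EE(\U,X)=0\}$ is immediate from the vanishing condition $\EE(\U,\V)=0$ in the definition of a cotorsion pair. For the reverse inclusion, given $X\in\B$ with $\EE(\U,X)=0$, I would invoke the second axiom of a cotorsion pair to produce an $\EE$-triangle $X\to V^X\to U^X\dashrightarrow\del$ with $V^X\in\V$ and $U^X\in\U$. The extension class $\del$ lies in $\EE(U^X,X)=0$, so this $\EE$-triangle splits, exhibiting $X$ as a direct summand of $V^X$. Since $\V$ is closed under direct summands by hypothesis, $X\in\V$. Part (b) is completely dual, applied to the other $\EE$-triangle $V_X\to U_X\to X\dashrightarrow$ guaranteed by the same axiom.

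For (c), I would combine the characterizations in (a) and (b) with the long exact sequence in $\EE$ attached to an $\EE$-triangle. Suppose $A\to B\to C\dashrightarrow$ is an $\EE$-triangle with $A,C\in\V$, and pick any $U\in\U$. Prolonging the four-term sequence of Proposition~\ref{exs}(2) by one term to the right (a standard consequence of the extriangulated axioms) yields exactness of
$$\EE(U,A)\longrightarrow \EE(U,B)\longrightarrow \EE(U,C).$$
Both outer groups vanish by (a), hence $\EE(U,B)=0$ for every $U\in\U$, and a second appeal to (a) gives $B\in\V$. Closure of $\U$ under extensions is dual, using (b) and Proposition~\ref{exs}(1).

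The only conceptual point is the splitting argument in (a) and (b); the rest is a routine diagram chase. The mild technical step is the one-term prolongation of the sequences in Proposition~\ref{exs} to a five-term exact sequence in extension groups. This is standard for extriangulated categories and is obtained from the axiom (ET4) and its dual via a short diagram argument, so it does not constitute a serious obstacle.
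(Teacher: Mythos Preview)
Your argument is correct and is precisely the standard one. The paper itself does not supply a proof of this lemma: it simply cites \cite[Remark 2.2]{LN} and remarks that the statement follows ``by the definition of a cotorsion pair.'' Your write-up fills in exactly the expected details---the splitting argument for (a) and (b) via closure under direct summands, and the long exact sequence for (c). Your caveat about needing one more term than Proposition~\ref{exs} provides is accurate; the required exactness of $\EE(U,A)\to\EE(U,B)\to\EE(U,C)$ is available for extriangulated categories (e.g.\ \cite[Corollary 3.12]{NP}), so there is no gap.
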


%
%

By Lemma \ref{L1}, we can get the following corollary.

\begin{cor}
For a pair of cotorsion pairs $((\s,\T),(\U,\V))$, the following conditions are equivalent:
$${\rm (1)} ~\s\subseteq \U; \quad {\rm (2)}~ \V\subseteq \T; \quad {\rm (3)}~ \EE(\s,\V)=0.$$
\end{cor}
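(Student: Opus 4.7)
The plan is to prove the cyclic chain of implications $(1)\Rightarrow(3)\Rightarrow(2)\Rightarrow(1)$, using Lemma \ref{L1} as the central tool. The lemma tells us that a cotorsion pair is uniquely determined by the vanishing of $\EE$ on one side: $\V=\{X\in\B\mid\EE(\U,X)=0\}$, $\U=\{Y\in\B\mid\EE(Y,\V)=0\}$, and analogously $\T=\{X\in\B\mid\EE(\s,X)=0\}$, $\s=\{Y\in\B\mid\EE(Y,\T)=0\}$. Together with the defining vanishing axioms $\EE(\s,\T)=0$ and $\EE(\U,\V)=0$, these characterizations make each implication a one-line inclusion argument.

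First, for $(1)\Rightarrow(3)$, if $\s\subseteq\U$ then by monotonicity of $\EE$ in the first variable, $\EE(\s,\V)\subseteq\EE(\U,\V)=0$. Next, for $(3)\Rightarrow(2)$, the hypothesis $\EE(\s,\V)=0$ means every $V\in\V$ satisfies $\EE(\s,V)=0$, and by Lemma \ref{L1}(a) applied to the cotorsion pair $(\s,\T)$ this is precisely the condition $V\in\T$, giving $\V\subseteq\T$. Finally, for $(2)\Rightarrow(1)$, if $\V\subseteq\T$ then $\EE(\s,\V)\subseteq\EE(\s,\T)=0$, so every $S\in\s$ satisfies $\EE(S,\V)=0$, and Lemma \ref{L1}(b) applied to $(\U,\V)$ yields $S\in\U$, i.e.\ $\s\subseteq\U$.

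There is no real obstacle: the argument is purely a bookkeeping exercise exploiting the duality between the two sides of a cotorsion pair as encoded in Lemma \ref{L1}. The only thing worth being careful about is that the subcategories $\s,\T,\U,\V$ are assumed closed under direct summands (part of the definition of a cotorsion pair), which ensures Lemma \ref{L1} applies verbatim; no further technical hypotheses are needed. The whole proof therefore fits in a few lines.
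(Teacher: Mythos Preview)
Your proof is correct and is precisely the argument the paper has in mind: the paper simply states that the corollary follows from Lemma~\ref{L1} without spelling out the implications, and your cyclic chain $(1)\Rightarrow(3)\Rightarrow(2)\Rightarrow(1)$ is the natural way to unpack that reference.
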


\begin{defn}\cite[Definition 4.12]{NP}
A pair of cotorsion pairs $((\s,\T),(\U,\V))$ is called a twin cotorsion pair if $\s \subseteq \U$.
\end{defn}

\begin{rem}
A cotorsion pair $(\U,\V)$ can be realized as a twin cotorsion pair $((\U,\V),(\U,\V))$.
\end{rem}

Given two objects $A$ and $B$ of $\B$, we denote by $\X(A,B)$ the set of morphisms from  $A$ to $B$ which factor through objects lie in a subcategory $\X$. $\X(A,B)$ is a subgroup of $\Hom_\B(A,B)$, and the family of
these subgroups forms an ideal of $\B$. Then we
have a category $\B/\X$ whose objects are the objects of
$\B$ and  the set of morphisms from $A$ to $B$ is $\Hom_\B(A,B)/\X(A,B)$. Such category is called the \emph{ideal quotient category} of $\B$ by $\X$. For a subcategory $\C\supseteq \X$, we also denote by $\C/\X$ the ideal quotient category of $\C$ by $\X$.

In the rest of the paper, let $((\s,\T),(\U,\V))$ be a twin cotorsion pair and $\W=\T\cap \U$. For convenience, we denote by $\uB$ the ideal quotient $\B/\W$. For any morphism $f\in \Hom_{\B}(A,B)$, we denote its image in $\Hom_{\uB}(A,B)$ by $\underline f$. For any subcategory $\D$ of $\B$, we denote by $\underline \D$ the image of $\D$ in $\uB$ by the canonical quotient functor $\pi:\B\to \uB$.

Now we recall the definition of  the heart of a twin cotorsion pair from \cite[Definitions 2.5 and 2.6]{LN}.

\begin{defn}\label{He}
For a twin cotorsion pair $((\s,\T),(\U,\V))$, let
\begin{itemize}
\item[(i)] $\B^+=\{X\in \B \text{ }|\text{ } \mbox{there exists an}~\text{ } \EE\text{-triangle } V\to W\to X\dashrightarrow \text{, }V\in \V \text{ and }W\in \W \}$,
\item[(ii)] $\B^-=\{Y\in \B \text{ }|\text{ } \mbox{there exists an}~ \text{ } \EE\text{-triangle } Y\to W'\to S\dashrightarrow \text{, }S\in \s \text{ and }W'\in \W \}$,
\item[(iii)] $\h=\B^+\cap \B^-$.
\end{itemize}
We call the ideal quotient $\underline \h$ the heart of $((\s,\T),(\U,\V))$. For a single cotorsion pair $(\U',\V')$, the heart of $((\U',\V'),(\U',\V'))$ is called the heart of the cotorsion pair $(\U',\V')$.
\end{defn}

We will use the following property in the next section.

\begin{lem}\cite[Lemmas 2.9, 2.10 and 2.30]{LN}\label{+-}
\begin{itemize}
\item[(1)] In the $\EE$-triangle $T\to A\xrightarrow{f} B\dashrightarrow$ with $T\in \T$, $A\in \B^+$ whenever $B\in \B^+$.
\item[(2)] In the $\EE$-triangle $T\to A\xrightarrow{f} B\dashrightarrow$ with $T\in \T$ and $A,B\in \h$, $\underline f$ is a monomorphism in $\underline \h$.
\item[(3)] In the $\EE$-triangle $X\xrightarrow{x} Y\to U\dashrightarrow$ with $U\in \U$, $Y\in \B^-$ whenever $X\in \B^-$.
\item[(4)] In the $\EE$-triangle $X\xrightarrow{x} Y\to U\dashrightarrow$ with $U\in \U$  and $X,Y\in \h$, $\underline x$ is an epimorphism in $\underline \h$.
\end{itemize}
\end{lem}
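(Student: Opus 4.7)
The lemma has four parts that pair up as (1)/(3) and (2)/(4) by duality in the extriangulated category (reversing arrows and swapping $(\s,\T)$ with $(\V,\U)$); so I would prove (1) and (2) and obtain the rest by this dualization.

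For (1), my plan proceeds in two steps. First, I would show $\T\subseteq \B^+$: take the $(\U,\V)$-resolution $V_T\to U_T\to T\dashrightarrow$ of $T\in \T$ with $V_T\in \V$ and $U_T\in \U$. Applying $\Hom(S,-)$ for $S\in \s$ to this $\EE$-triangle and using the long exact sequence extending Proposition \ref{exs} (the six-term $\EE$-exact sequence standard in extriangulated categories), I would deduce $\EE(S,U_T)=0$ from $\EE(S,V_T)=0$ (since $\s\subseteq \U$ and $\EE(\U,\V)=0$) and $\EE(S,T)=0$ (since $T\in \T$). Hence $U_T\in \T\cap \U=\W$, and $V_T\to U_T\to T\dashrightarrow$ witnesses $T\in \B^+$. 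In the second step, given the $\EE$-triangle $T\to A\to B\dashrightarrow$ with $T,B\in \B^+$, I would invoke the octahedral-type axiom (ET4) to splice the defining $\EE$-triangles for $T\in \B^+$ and $B\in \B^+$ with the given one, producing an $\EE$-triangle $V'\to W'\to A\dashrightarrow$ where $V'\in \V$ by extension closure of $\V$, and $W'\in \W$ by extension closure of $\T$ and $\U$ combined with the $\EE(\s,-)$-vanishing arguments above.

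For (2), assume $g\colon H\to A$ in $\h$ satisfies $\underline f\,\underline g=0$ in $\underline \h$, so $fg$ factors as $f_2 f_1$ through some $W_0\in \W$ with $f_1\colon H\to W_0$ and $f_2\colon W_0\to B$. Since $H\in \h\subseteq \B^-$, fix an $\EE$-triangle $H\xrightarrow{h} W'_H\to S'_H\dashrightarrow$ with $W'_H\in \W$ and $S'_H\in \s$. Applying $\Hom(-,W_0)$ to this triangle, the four-term sequence of Proposition \ref{exs} ends at $\EE(S'_H,W_0)$, which vanishes because $W_0\in \T$ and $\EE(\s,\T)=0$; hence $f_1$ lifts along $h$ to some $\tilde f_1\colon W'_H\to W_0$, so $fg=(f_2\tilde f_1)h$ factors through $h$. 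Naturality of the connecting morphism $\partial\colon \Hom(H,-)\to \EE(S'_H,-)$ applied to $f\colon A\to B$ gives $f_*\partial_A(g)=\partial_B(fg)=0$. Finally, applying $\EE(S'_H,-)$ to $T\to A\xrightarrow{f} B\dashrightarrow$ and using $\EE(S'_H,T)=0$ shows that $f_*\colon \EE(S'_H,A)\to \EE(S'_H,B)$ is injective, so $\partial_A(g)=0$. Therefore $g$ factors through $h$, hence through $W'_H\in \W$, proving $\underline g=0$ and thus that $\underline f$ is monic in $\underline \h$.

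The main technical point is in step (1): ensuring $W'$ lies in $\U$ (not just $\T$) cannot be done by extension closure alone, because the given $\EE$-triangle involves $T\in \T$ whose membership in $\U$ is not assumed. The essential device is the preliminary replacement of $T$ by $U_T\in \W$, which turns the problem into combining objects already in $\W$ and exploits both $\EE(\s,\V)=0$ and $\EE(\s,\T)=0$. For (2) the main insight is less the diagram-chasing than identifying naturality of $\partial$ as the correct mechanism for transporting the factorization of $fg$ back to $g$, together with a second use of $\EE(\s,\T)=0$ to guarantee the needed injectivity of $f_*$.
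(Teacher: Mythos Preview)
The paper does not supply a proof of this lemma; it is quoted from \cite[Lemmas~2.9, 2.10 and 2.30]{LN}. Your proposal is essentially correct, with one point in part~(1) that needs adjustment.

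For~(1), your first step is fine, and in fact simpler than you make it: from $V_T\to U_T\to T$ with $V_T\in\V\subseteq\T$ and $T\in\T$, extension closure of $\T$ (Lemma~\ref{L1}(c)) immediately gives $U_T\in\T$, so $U_T\in\W$; no long exact sequence is needed. Your second step, however, does not work as literally described. The $\B^+$-witness $V_T\to W_T\to T$ and the given triangle $T\to A\to B$ share $T$ as the \emph{third} term of one and the \emph{first} term of the other, and in a general extriangulated category there is no ET4-type axiom combining such a pair (you have neither a map out of the first term nor into the third term to which Lemma~\ref{L4} or its dual applies). A working construction is: pull back $T\to A\to B$ along $W_B\to B$ via the dual of Lemma~\ref{L4} and \cite[Proposition~3.15]{NP} to obtain $\EE$-triangles $V_B\to E\to A$ and $T\to E\to W_B$; extension closure of $\T$ gives $E\in\T$. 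Now take the $(\U,\V)$-resolution $V_E\to U_E\to E$; by your first-step argument applied to $E$ in place of $T$, we get $U_E\in\W$. Finally, since $U_E\to E$ and $E\to A$ are both deflations, (ET4)$^{\op}$ yields an $\EE$-triangle $V'\to U_E\to A$ together with $V_E\to V'\to V_B$, so $V'\in\V$ by extension closure. Thus your ``replacement'' idea is exactly right, but it must be applied to the intermediate object $E$, not to $T$ directly.

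For~(2), your argument via naturality of the connecting map $\partial$ and injectivity of $f_*$ on $\EE(S'_H,-)$ is correct and clean; it is precisely the mechanism used in \cite{LN}. Note only that the exactness of $\EE(S'_H,T)\to\EE(S'_H,A)\to\EE(S'_H,B)$ goes beyond Proposition~\ref{exs} as stated here, but it is standard (\cite[Corollary~3.12]{NP}).
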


We have already known that: for any twin cotorsion pair $((\s,\T),(\U,\V))$, its heart is a semi-abelian category \cite[Theorem 2.32]{LN}. Moreover, if $\h=\B^+$ or $\h=\B^-$, the heart of $((\s,\T),(\U,\V))$ is quasi-abelian \cite[Theorem 4.2]{HS}. In the next section, we show that the hearts of twin cotorsion pairs are always quasi-abelian.

\section{The hearts are quasi-abelian}

In order to show our main results, we need the following definition.

\begin{defn}\label{DF1}
For any pair of objects $C,A\in \B$, let $\EE_1(C,A)$ be the subset of $\EE(C,A)$ such that $\delta\in \EE_1(C,A)$ if it admits an $\EE$-triangle
$$\xymatrix{A\ar[r]^x &B\ar[r]^y &C\ar@{-->}[r]^{\delta} &}$$
which satisfies the following conditions:
\begin{itemize}
\item[(a)] $x$ is $\W$-monic, which means $\Hom_{\B}(B,W)\xrightarrow{\Hom_{\B}(x,W)}\Hom_{\B}(A,W)$ is surjective for any $W\in \W$.
\item[(b)] $y$ is $\W$-epic, which means $\Hom_{\B}(W,B)\xrightarrow{\Hom_{\B}(W,y)}\Hom_{\B}(W,C)$ is surjective for any $W\in \W$.
\end{itemize}
For convenience, such $\EE$-triangles are called $\EE_1$-triangles.
\end{defn}


Although $\h$ is not always extension closed in $(\B,\EE,\mathfrak{s})$, we have the following lemma.

\begin{lem}\label{exten}
Let $A \xrightarrow{x} B\xrightarrow{y} C\dashrightarrow$ be an $\EE_1$-triangle with $C,A\in \h$. Then $B\in \h$.
\end{lem}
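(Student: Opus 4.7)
\emph{Plan of proof.} My goal is to exhibit, for $B$, the two $\EE$-triangles witnessing $B\in\B^+$ and $B\in\B^-$; the two constructions will be formally dual, with the $\W$-epic property of $y$ driving the first and the $\W$-monic property of $x$ the second. The main tools are the pullback/pushout of an $\EE$-triangle along a morphism and the $3\times 3$/(ET4) axiom of an extriangulated category, together with the extension closure of $\V$ and $\s$ supplied by Lemma \ref{L1}(c).

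For $B\in\B^+$ I pick $\EE$-triangles $V_A\to W_A\to A\dashrightarrow$ and $V_C\to W_C\xrightarrow{c}C\dashrightarrow\delta_C$ witnessing $A,C\in\B^+$, and let $\delta$ realise $A\xrightarrow{x}B\xrightarrow{y}C\dashrightarrow$. Since $y$ is $\W$-epic and $W_C\in\W$, the morphism $c$ lifts to some $g:W_C\to B$ with $yg=c$, which forces $c^{\ast}\delta=0$. The pullback of $\delta$ along $c$ is therefore a split $\EE$-triangle, and the standard $3\times 3$ completion produces an $\EE$-triangle
$$V_C\longrightarrow A\oplus W_C\xrightarrow{\svech{x}{g}}B\dashrightarrow.$$
Taking the direct sum of $V_A\to W_A\to A\dashrightarrow$ with the split $\EE$-triangle $0\to W_C\to W_C\dashrightarrow$ yields an $\EE$-triangle $V_A\to W_A\oplus W_C\to A\oplus W_C\dashrightarrow$. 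Applying (ET4) (in its dual form) to the composition $W_A\oplus W_C\to A\oplus W_C\to B$ then produces an $\EE$-triangle $V\to W_A\oplus W_C\to B\dashrightarrow$ whose third term sits in an $\EE$-triangle $V_A\to V\to V_C\dashrightarrow$. Extension closure of $\V$ forces $V\in\V$, and $W_A\oplus W_C\in\W$ since both $\T$ and $\U$ are closed under finite direct sums (immediate from Lemma \ref{L1}(c) applied to each cotorsion pair). Hence $B\in\B^+$.

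The proof that $B\in\B^-$ is entirely dual. Pick $\EE$-triangles $A\to W_A'\to S_A\dashrightarrow$ and $C\to W_C'\to S_C\dashrightarrow$ witnessing $A,C\in\B^-$. Since $x$ is $\W$-monic, the morphism $A\to W_A'$ extends along $x$ to some $B\to W_A'$, which forces the pushout of $\delta$ along $A\to W_A'$ to split; the $3\times 3$ completion yields an $\EE$-triangle $B\to W_A'\oplus C\to S_A\dashrightarrow$. Combining this with the direct sum $\EE$-triangle $W_A'\oplus C\to W_A'\oplus W_C'\to S_C\dashrightarrow$ via (ET4) produces an $\EE$-triangle $B\to W_A'\oplus W_C'\to S'\dashrightarrow$ together with an $\EE$-triangle $S_A\to S'\to S_C\dashrightarrow$. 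Extension closure of $\s$ (Lemma \ref{L1}(c) applied to the cotorsion pair $(\s,\T)$) gives $S'\in\s$, so $B\in\B^-$ and therefore $B\in\h$.

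The only real subtlety is organising the (ET4)/$3\times 3$ bookkeeping so that the $\W$-epic (respectively $\W$-monic) hypothesis is used precisely at the point where one needs the intermediate pullback (respectively pushout) $\EE$-triangle to split; once that splitting is in hand, the closure properties of $\V$, $\s$ and $\W$ make the rest routine.
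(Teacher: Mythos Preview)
Your proof is correct and shares the paper's key idea: use the $\W$-monic hypothesis on $x$ (resp.\ the $\W$-epic hypothesis on $y$) to make the pushout of $\delta$ along $A\to W_A'$ (resp.\ the pullback along $W_C\to C$) split, which after a $3\times 3$/Lemma~\ref{L4} completion yields an $\EE$-triangle $B\to W_A'\oplus C\to S_A$ (resp.\ $V_C\to A\oplus W_C\to B$). The only difference lies in how the argument is closed off. The paper stops at that intermediate triangle and appeals to Lemma~\ref{+-}, i.e.\ to closure properties of $\B^\pm$ proved in \cite[Lemmas~2.9, 2.10]{LN} (going slightly beyond the four items actually quoted). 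You instead unpack this closure by hand: you bring in the second witnessing triangle (for $C\in\B^-$, resp.\ $A\in\B^+$), apply (ET4) (resp.\ its dual) to the resulting composite of inflations (resp.\ deflations), and finish with the extension closure of $\s$ and $\V$ from Lemma~\ref{L1}(c). Your version is therefore a few lines longer but entirely self-contained, while the paper's is more compact by deferring to \cite{LN}.
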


\begin{proof}
Since $A\in \h$, it admits an $\EE$-triangle $A \xrightarrow{w} W\rightarrow S \dashrightarrow$ with $W\in \W$ and $S\in \s$. Then $w$ factors through $x$ since $x$ is $\W$-monic. Hence we have the following commutative diagram by the dual of \cite[Proposition 3.15]{NP}.
$$\xymatrix{
A \ar[r]^x \ar[d]_w &B\ar[r]^y \ar[d] &C \ar@{=}[d] \ar@{-->}[r] &\\
W \ar[r] \ar[d] &W\oplus C \ar[r] \ar[d] &C \ar@{-->}[r] &\\
S \ar@{-->}[d] \ar@{=}[r] &S \ar@{-->}[d]\\
&&
}
$$
By Lemma \ref{+-}, the second column of the diagram implies that $B\in \B^-$. Dually we can prove that $B\in \B^+$. Hence $B\in \B^+\cap\B^-=\h$.
\end{proof}

We introduce a useful lemma.

\begin{lem}\cite[Proposition 1.20]{LN}\label{L4}
Let $A\xrightarrow{x} B\xrightarrow{y} C\overset{\delta}\dashrightarrow$ be any $\EE$-triangle. Let $f:A\to D$ be any morphism and $D \xrightarrow{d} E\xrightarrow{e} C\dashrightarrow$ be an $\EE$-triangle realizing $f_*\delta$. Then there is a morphism $g:B\to E$ which gives a commutative diagram of $\EE$-triangles
$$\xymatrix{
A\ar[r]^x \ar[d]_{f} &B\ar[r]^y \ar[d]^{\exists ~g} &C\ar@{-->}[r]^{\delta} \ar@{=}[d]&\\
D \ar[r]_d &E\ar[r]_e &C\ar@{-->}[r]_{f_*\delta} &
}
$$
and an $\EE$-triangle $A \xrightarrow{\svecv{-f}{x}} D\oplus B \xrightarrow{\svech{d}{g}} E \overset{e^*\delta}\dashrightarrow$. This means the left square of the commutative diagram is a weak push-out and  a weak pull-back:
\begin{itemize}
\item[(i)] If there are morphisms $s:D\to R$ and $t:B\to R$ such that $sf=tx$, then there exists a morphism (not necessarily unique) $r:E\to R$ which makes the following diagram commute:
$$\xymatrix{
A\ar[r]^x \ar[d]_{f} &B \ar[d]^g \ar@/^8pt/[ddr]^t\\
D \ar[r]^d \ar@/_8pt/[drr]_s &E \ar@{.>}[dr]^r\\
&&R
}
$$
\item[(ii)] If there are morphisms $s':R'\to D$ and $t':R'\to B$ such that $ds'=gt'$, then there exists a morphism (not necessarily unique) $r':R'\to A$ which makes the following diagram commute:
$$\xymatrix{
R' \ar@/_8pt/[ddr]_{s'} \ar@/^8pt/[drr]^{t'} \ar@{.>}[dr]^{r'}\\
&A\ar[r]^x \ar[d]_{f} &B \ar[d]^g\\
&D \ar[r]^d  &E
}
$$
\end{itemize}
\end{lem}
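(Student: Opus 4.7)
The plan is to deduce everything from the defining property of the pushforward $f_*\delta$ together with the standard extension/realization axioms of an extriangulated category. The existence of $g$ and the commutativity of the two squares on the top will come from the fact that $(f,\id_C)$ is, by the very definition of $f_*\delta$, a morphism of $\EE$-extensions from $\delta$ to $f_*\delta$: any such morphism can be completed to a morphism of the sequences realizing them (this is exactly the property recalled just before Definition \ref{dein}). So one gets $g:B\to E$ with $gx=df$ and $eg=y$ for free.

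Next I would construct the $\EE$-triangle $A\xrightarrow{\svecv{-f}{x}}D\oplus B\xrightarrow{\svech{d}{g}}E\overset{e^*\delta}{\dashrightarrow}$. This is the ``mapping cone'' of the morphism of $\EE$-triangles just produced, and in the extriangulated setting it is obtained by applying the axiom (ET4)$^{\mathrm{op}}$ (the dual octahedral-type axiom of Nakaoka--Palu) to the $\EE$-triangles $A\xrightarrow{x}B\xrightarrow{y}C\dashrightarrow$ and $D\xrightarrow{d}E\xrightarrow{e}C\dashrightarrow$ together with the comparison $g$. A cleaner route, which is the one I would actually write down, is to observe that such a ``glueing'' triangle is standard folklore for extriangulated categories and appears, for instance, as part of the proof of \cite[Proposition 3.15]{NP}; one simply verifies that the matrices of maps and the realized extension $e^*\delta$ are the correct ones by chasing the two compatibility squares.

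With that $\EE$-triangle in hand, the weak push-out and weak pull-back statements are formal consequences of the two long exact sequences in Proposition \ref{exs}. For (i): given $s:D\to R$ and $t:B\to R$ with $sf=tx$, the row morphism $\svech{s}{t}:D\oplus B\to R$ satisfies $\svech{s}{t}\circ \svecv{-f}{x}=-sf+tx=0$. Applying $\Hom_\B(-,R)$ to the $\EE$-triangle $A\to D\oplus B\to E\dashrightarrow$ then yields $r:E\to R$ with $r\circ\svech{d}{g}=\svech{s}{t}$, i.e.\ $rd=s$ and $rg=t$. For (ii) the argument is dual, using $\Hom_\B(R',-)$ applied to the same $\EE$-triangle.

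The main obstacle I anticipate is step two: producing the glueing $\EE$-triangle $A\to D\oplus B\to E\dashrightarrow$ with the correct signs and the correct realized extension $e^*\delta$. In a triangulated category this is a one-line application of the octahedral axiom, but in the extriangulated setting one has to be careful to invoke the correct variant of (ET4) and to check that the class produced there coincides with $e^*\delta$. Once this is done, the remaining parts are purely diagrammatic consequences of Proposition \ref{exs}.
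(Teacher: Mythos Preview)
The paper does not prove this lemma; it is quoted verbatim from \cite[Proposition 1.20]{LN} and used as a black box throughout. So there is nothing in the paper to compare your argument against.

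That said, your outline is essentially the standard one. Two remarks. First, the realization axiom gives you \emph{some} $g$ making the ladder commute, but the substance of the lemma is that a specific such $g$ makes $A\to D\oplus B\to E$ an $\EE$-triangle realizing $e^*\delta$; this is precisely the content of \cite[Proposition 1.20]{LN}, and you correctly identify it as the nontrivial step. Second, your derivation of (i) from Proposition~\ref{exs} is fine, but (ii) is not quite ``dual'': you again apply $\Hom_\B(R',-)$ to the same triangle, but the hypothesis $ds'=gt'$ does not make $\svech{d}{g}\svecv{s'}{t'}$ vanish. You should feed in $\svecv{-s'}{t'}$ instead, so that $\svech{d}{g}\svecv{-s'}{t'}=-ds'+gt'=0$, and then the factorization through $\svecv{-f}{x}$ gives $fr'=s'$ and $xr'=t'$ as required.
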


Denote by $\mathfrak{s}|_{\EE_1}$ the restriction of $\mathfrak{s}$ onto the $\EE$-extensions realized by $\EE_1$-triangles. We have the following proposition.

\begin{prop}\label{main1}
$(\B,\EE_1,\mathfrak{s}|_{\EE_1})$ is an extriangulated subcategory of $(\B,\EE,\mathfrak{s})$.
\end{prop}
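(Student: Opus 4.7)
The plan is to give a functorial reformulation of $\EE_1$ that trivializes biadditivity, and then verify the octahedral axiom $(\mathrm{ET4})$ and its dual; the rest of the axioms will be inherited essentially for free from the ambient structure. The crucial observation is that, by Proposition~\ref{exs} applied to a realization $A\xrightarrow{x}B\xrightarrow{y}C\dashrightarrow\delta$, the map $x$ is $\W$-monic iff $k_*\delta=0$ for every $k\colon A\to W$ with $W\in\W$, and $y$ is $\W$-epic iff $h^*\delta=0$ for every $h\colon W\to C$. Hence
$$\EE_1(C,A)=\Big(\bigcap_{k\colon A\to W,\, W\in\W}\Ker(k_*)\Big)\cap\Big(\bigcap_{h\colon W\to C,\, W\in\W}\Ker(h^*)\Big),$$
which is a subgroup of $\EE(C,A)$ independent of the choice of realization. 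Functoriality then makes the sub-bifunctor property immediate: for any $a\colon A\to A'$ and $\delta\in\EE_1$ we have $k_*(a_*\delta)=(ka)_*\delta=0$ and $h^*(a_*\delta)=a_*(h^*\delta)=0$, and dually for pull-backs; split extensions clearly lie in $\EE_1$. The axiom $(\mathrm{ET3})$ (and its dual) is automatic since the compatibility $a_*\delta=c^*\delta'$ defining a morphism of $\EE_1$-triangles is literally the same equation as in the ambient $(\B,\EE,\mathfrak{s})$, where the required middle morphism already exists.

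For $(\mathrm{ET4})$, I would take two composable $\EE_1$-triangles $A\xrightarrow{f}B\xrightarrow{f'}D\dashrightarrow\delta_1$ and $B\xrightarrow{g}C\xrightarrow{g'}F\dashrightarrow\delta_2$, and apply $(\mathrm{ET4})$ in $(\B,\EE,\mathfrak{s})$ to produce an object $E$, morphisms $h'\colon C\to E$, $d\colon D\to E$, $e\colon E\to F$, and two new $\EE$-triangles $A\xrightarrow{gf}C\xrightarrow{h'}E\dashrightarrow\delta_3$ and $D\xrightarrow{d}E\xrightarrow{e}F\dashrightarrow\delta_4$, together with the standard relations $\delta_4=f'_*\delta_2$, $d^*\delta_3=\delta_1$, $f_*\delta_3=e^*\delta_2$ and commutativities $df'=h'g$, $eh'=g'$. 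Membership $\delta_4\in\EE_1$ is immediate from $\delta_4=f'_*\delta_2$ via the functorial characterization: $k_*\delta_4=(kf')_*\delta_2=0$ and $h^*\delta_4=f'_*(h^*\delta_2)=0$. For $\delta_3\in\EE_1$, the $\W$-monicity of $gf$ is clear by composing the lifting properties of $f$ and $g$. To check that $h'$ is $\W$-epic, I would take $\alpha\colon W\to E$, lift $e\alpha$ through the $\W$-epic $g'=eh'$ to $\gamma\colon W\to C$, observe that $e(\alpha-h'\gamma)=0$, use the half-exact sequence of Proposition~\ref{exs}(2) along $D\to E\to F$ to write $\alpha-h'\gamma=d\eta$ for some $\eta\colon W\to D$, then lift $\eta$ through the $\W$-epic $f'$ to $\mu\colon W\to B$, and conclude $\alpha=h'(\gamma+g\mu)$ using $df'=h'g$. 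The dual axiom $(\mathrm{ET4})^{\mathrm{op}}$ is handled by a symmetric argument.

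The main obstacle is precisely this $\W$-epic verification for $h'$: it requires a diagram chase that routes $\alpha\colon W\to E$ back through \emph{both} new $\EE$-triangles of the octahedron, using the $\W$-epic properties of $f'$ and $g'$ simultaneously together with the commutativities coming from $(\mathrm{ET4})$. Everything else is either formal, follows at once from the relation $\delta_4=f'_*\delta_2$, or is a direct composition of $\W$-factorizations. Once this step and its dual are in place, $\EE_1$ qualifies as a closed sub-bifunctor of $\EE$ in the sense of Herschend--Liu--Nakaoka, and the extriangulated substructure $(\B,\EE_1,\mathfrak{s}|_{\EE_1})$ follows.
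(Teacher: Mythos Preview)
Your proof is correct and follows essentially the same route as the paper: both reduce the problem to the Herschend--Liu--Nakaoka criterion (closure under push-forward plus the (ET4)-type composition property and their duals), and your diagram chase for the $\W$-epicity of the new deflation in (ET4) is, up to a relabelling of objects, exactly the paper's argument in their step~(2-b).

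The one genuine difference is your functorial reformulation $\EE_1(C,A)=\bigcap_k\Ker(k_*)\cap\bigcap_h\Ker(h^*)$. This buys you closure under push-forward and pull-back essentially for free via naturality, whereas the paper verifies it by hand: it realizes $f_*\delta$ by an $\EE$-triangle, invokes Lemma~\ref{L4} to obtain the homotopy push-out square, and uses the weak push-out property to lift any $A'\to W$ through the new inflation. Your approach is cleaner here and also makes the verification $\delta_4=f'_*\delta_2\in\EE_1$ a one-liner (the paper does not need this step at all under the HLN criterion, so your extra check is harmless redundancy). For the remaining (ET4) computation---$gf$ being $\W$-monic by composition, and the key two-stage lifting $\alpha\mapsto\gamma\mapsto\eta\mapsto\mu$ to show the new deflation is $\W$-epic---the two proofs coincide.
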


\begin{proof}
By \cite[Proposition 3.16]{HLN} and duality, we need to prove that:
\begin{itemize}
\item[(1)] For any $\delta \in \EE_1(C,A)$ and any morphism $f:A\to D$, we have $f_*\delta\in \EE_1(C,D)$.
\item[(2)] If we have two $\EE_1$-triangles
$$\xymatrix{A \ar[r]^f &B\ar[r]^{f'} &C\ar@{-->}[r]^{\delta} &, &B\ar[r]^g &D\ar[r]^{g'} &F\ar@{-->}[r]^{\rho}&,}$$
then $gf$ admits an $\EE_1$-triangles $A \xrightarrow{gf} D\to E\overset{\sigma}\dashrightarrow$.
\end{itemize}

We check (1) first. Let $\delta$ be realized by an $\EE_1$-triangle $A \xrightarrow{x} B\xrightarrow{y} C\dashrightarrow$ and $f_*\delta$ be realized by an $\EE$-triangle $D \xrightarrow{d} E\xrightarrow{e} C\dashrightarrow$. By Lemma \ref{L4}, there is a morphism $g:B\to E$ which gives a  commutative diagram of $\EE$-triangles
$$\xymatrix{
A\ar[r]^x \ar[d]_{f} &B\ar[r]^y \ar[d]^g &C\ar@{-->}[r]^{\delta} \ar@{=}[d]&\\
D \ar[r]_d &E\ar[r]_e &C\ar@{-->}[r]_{f_*\delta} &
}
$$
such that $A \xrightarrow{\svecv{-f}{x}} D\oplus B \xrightarrow{\svech{d}{g}} E \overset{e^*\delta}\dashrightarrow$ is an $\EE$-triangle.

(1-a) {\bf $d$ is $\W$-monic:} Let $w:D\to W$ be any morphism with $W\in \W$. Since $x$ is $\W$-monic, there is a morphism $w':B\to W$ such that $wf=w'x$. Since the left square of the commutative diagram is a weak push-out, there is a morphism $e_1:E\to W$ such that $e_1d=w$, which implies that $d$ is $\W$-monic.


(1-b) {\bf $e$ is $\W$-epic:} This is just because $y$ is $\W$-epic.

Now we show that (2) holds. For any two $\EE_1$-triangles
$$\xymatrix{A \ar[r]^f &B\ar[r]^{f'} &C\ar@{-->}[r]^{\delta} &, &B\ar[r]^g &D\ar[r]^{g'} &F\ar@{-->}[r]^{\rho}&,}$$
we can get the following commutative diagram (see (ET4) in \cite[Definition 2.12]{NP}).
$$\xymatrix{
A \ar[r]^f \ar@{=}[d] &B\ar[r]^{f'} \ar[d]^g &C \ar[d]^h \ar@{-->}[r]^{\delta} &\\
A \ar[r]^{gf} &D \ar[d]^{g'} \ar[r]^e &E \ar[d]^{h'} \ar@{-->}[r]^{\sigma} &\\
&F \ar@{=}[r] \ar@{-->}[d]^{\rho} &F \ar@{-->}[d]^{f'_*\rho}\\
&&
}
$$
We show that $\sigma\in \EE_1(E,A)$.

(2-a) {\bf $gf$ is $\W$-monic:} This is just because $g$ and $f$ are $\W$-monic.

(2-b) {\bf $e$ is $\W$-epic:} Let $w:W\to E$ be any morphism with $W\in \W$. Since $g'$ is $\W$-epic, there exists a morphism $w_1:W\to D$ such that $h'w=g'w_1=h'ew_1$. Then there is a morphism $w_2:W\to C$ such that $hw_2=w-ew_1$. Since $f'$ is $\W$-epic, there is a morphism $w_3:W\to B$ such that $w_2=f'w_3$. Hence $hw_2=hf'w_3=egw_3$. Then $w=e(w_1+gw_3)$, which means $e$ is $\W$-epic.
\end{proof}

Denote $\EE_1|_{\h}$ by $\mathbb{F}$. For convenience, any $\EE$-triangle which realizes an $\EE$-extension lying in some extension group $\mathbb{F}(C,A)$ is called an $\mathbb{F}$-triangle. Let $\mathfrak{F}$ be the class of all $\mathbb{F}$-triangles. By Remark \ref{rem}, Lemma \ref{exten} and Proposition \ref{main1}, we have the following corollary.

\begin{cor}
$(\h,\mathbb{F},\mathfrak{s}|_{\mathbb{F}})$ is an extriangulated subcategory of $(\B,\EE,\mathfrak{s})$.
\end{cor}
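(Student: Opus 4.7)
The plan is to assemble the corollary from three ingredients already established in the excerpt. First, Proposition \ref{main1} gives that $(\B,\EE_1,\mathfrak{s}|_{\EE_1})$ is itself an extriangulated category sitting inside $(\B,\EE,\mathfrak{s})$ (since $\EE_1(C,A) \subseteq \EE(C,A)$ for every pair of objects and the realization $\mathfrak{s}|_{\EE_1}$ is the restriction of $\mathfrak{s}$). So the question reduces to verifying that the heart $\h$ is extension-closed inside this smaller extriangulated structure, i.e.\ that every $\EE_1$-triangle whose end terms lie in $\h$ has its middle term in $\h$.

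The second step is exactly what Lemma \ref{exten} supplies: any $\EE_1$-triangle $A \xrightarrow{x} B \xrightarrow{y} C \dashrightarrow$ with $A, C \in \h$ satisfies $B \in \h$. Thus $\h$ is an extension-closed subcategory of the extriangulated category $(\B,\EE_1,\mathfrak{s}|_{\EE_1})$.

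The third step is to invoke Remark \ref{rem}, which states that any extension-closed subcategory of an extriangulated category inherits a natural extriangulated structure by restricting the bifunctor and the realization. Applying this to the extension-closed subcategory $\h$ of $(\B,\EE_1,\mathfrak{s}|_{\EE_1})$ and unravelling the notation gives the restricted bifunctor $\EE_1|_{\h} = \mathbb{F}$ together with the realization $\mathfrak{s}|_{\mathbb{F}}$, so that $(\h,\mathbb{F},\mathfrak{s}|_{\mathbb{F}})$ is an extriangulated subcategory of $(\B,\EE,\mathfrak{s})$, as required.

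There is essentially no obstacle here; this is a bookkeeping corollary whose content is entirely distributed among Proposition \ref{main1}, Lemma \ref{exten} and Remark \ref{rem}. The only mild point to be careful about is applying Remark \ref{rem} relative to the intermediate extriangulated structure $(\B,\EE_1,\mathfrak{s}|_{\EE_1})$ rather than the original $(\B,\EE,\mathfrak{s})$, but both are legitimate since the remark applies to any extriangulated category and its extension-closed subcategories.
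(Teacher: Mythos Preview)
Your proposal is correct and follows exactly the same approach as the paper: the paper simply cites Remark~\ref{rem}, Lemma~\ref{exten} and Proposition~\ref{main1} without further comment, and your write-up spells out precisely how these three ingredients combine.
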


We can get the following property for $\mathbb{F}$-triangles.

\begin{lem}\label{ick}
Let $A\xrightarrow{x} B\xrightarrow{y} C\dashrightarrow$ be an $\mathbb{F}$-triangle. Then $A\xrightarrow{\underline x} B\xrightarrow{\underline y} C$ is a kernel-cokernel pair in $\underline \h$.
\end{lem}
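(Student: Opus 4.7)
The plan is to verify that $\underline y$ is the cokernel of $\underline x$ and, dually, that $\underline x$ is the kernel of $\underline y$ in $\underline\h$. First, $\underline y\,\underline x = 0$ is immediate since $yx = 0$ follows from Proposition~\ref{exs} applied to $\id_A$. For the existence half of the cokernel universal property, given $\underline g:B\to D$ with $\underline g\,\underline x=0$, I would write $gx = vu$ with $u:A\to W_0$, $W_0\in\W$, and use the $\W$-monicity of $x$ to lift to $u':B\to W_0$ with $u'x = u$; then Proposition~\ref{exs} provides $h:C\to D$ with $hy = g - vu'$, so $\underline g = \underline h\,\underline y$. Existence for the kernel is dual, using $\W$-epicity of $y$.

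I expect the main obstacle to be uniqueness, which reduces to the single claim: if $k:C\to D$ has $ky$ factoring through $\W$, then $k$ itself factors through $\W$. The strategy is to fix the $\EE$-triangle $V_D\to W_D\xrightarrow{d_w}D\xrightarrow{\tau}$ coming from $D\in\B^+$ and to show $k^*\tau = 0$ in $\EE(C,V_D)$ (equivalently, $k$ factors through $W_D\in\W$). This should be a four-step chain: (i) writing $ky = pq$ with $W\in\W\subseteq\U$, so $\EE(\U,\V)=0$ gives $p^*\tau=0$ and hence $y^*(k^*\tau)=0$; (ii) the long exact sequence of Proposition~\ref{exs} writes $k^*\tau = \alpha_*\delta$ for some $\alpha:A\to V_D$; (iii) taking the $\EE$-triangle $A\xrightarrow{a}W'_A\to S'_A\dashrightarrow$ from $A\in\B^-$ and using $\EE(S'_A,V_D)=0$ (since $S'_A\in\s\subseteq\U$) yields $\alpha = \alpha'a$; and (iv) the $\W$-monicity of $x$ extends $a$ to $B$, forcing $a_*\delta = 0$ and hence $k^*\tau=\alpha'_*(a_*\delta)=0$.

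The dual uniqueness for the kernel (that $x\mu$ factoring through $\W$ implies $\mu$ does) follows a symmetric four-step chain: take the $\EE$-triangle $Z\xrightarrow{z}W'_Z\to S'_Z\dashrightarrow$ from $Z\in\B^-$ and aim at $\mu_*\tau' = 0$ in $\EE(S'_Z,A)$; first $(x\mu)_*\tau' = 0$ follows from $\EE(\s,\T)=0$, then $\mu_*\tau' = f^*\delta$ for some $f:S'_Z\to C$ by Proposition~\ref{exs}; next $\EE(S'_Z,V_C)=0$ combined with the $\B^+$-structure $V_C\to W_C\xrightarrow{w_C}C$ on $C$ gives $f = w_C f'$; and finally the $\W$-epicity of $y$ forces $w_C^*\delta = 0$. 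The delicate ingredient throughout is the interplay between the $\W$-monic/epic conditions built into the $\mathbb F$-triangle and the cotorsion-pair vanishings $\EE(\U,\V)=\EE(\s,\T)=0$, which together collapse every $\EE$-extension obstruction in the chain to zero.
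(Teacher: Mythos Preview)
Your argument is correct. The existence halves of the kernel and cokernel properties are handled exactly as in the paper: for the kernel, the paper also writes $yz$ through $\W$, uses $\W$-epicity of $y$ to replace the factorization by one through $B$, and then lifts along $x$ via Proposition~\ref{exs}.

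Where you diverge from the paper is in the uniqueness part (showing $\underline x$ is a monomorphism and $\underline y$ is an epimorphism). The paper does not run your four-step extension chase; instead it takes the $\EE$-triangle $V_C\to W_C\to C\dashrightarrow$ from $C\in\B^+$, uses $\W$-epicity of $y$ together with Lemma~\ref{L4} to obtain an $\EE$-triangle $V_C\longrightarrow A\oplus W_C\xrightarrow{\svech{x}{w}} B\dashrightarrow$, and then simply invokes Lemma~\ref{+-}(2) (since $V_C\in\V\subseteq\T$) to conclude that $\underline x$ is a monomorphism. This is shorter because the real work is buried in the cited result from \cite{LN}; your route is longer but entirely self-contained, relying only on the cotorsion-pair vanishings $\EE(\U,\V)=0$ and $\EE(\s,\T)=0$ and the long exact sequence.

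One small citation issue: in your step (ii) and its dual you need exactness at the $\EE$-term (that $\ker\big(y^*:\EE(C,V_D)\to\EE(B,V_D)\big)$ is the image of $\Hom_\B(A,V_D)\to\EE(C,V_D)$). Proposition~\ref{exs} as stated in the paper only records exactness at the $\Hom$-terms; for exactness at the $\EE$-term you should cite the full long exact sequence for extriangulated categories, e.g.\ \cite[Corollary~3.12]{NP}. Steps (iii) and (iv), by contrast, really do use only the exactness already contained in Proposition~\ref{exs}.
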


\begin{proof}
We only need to show that $\underline x$ is the kernel of $\underline y$, the other half is by dual.

$C$ admits an $\EE$-triangle $V_C\to W_C\to C\dashrightarrow$ with $V_C\in \V$ and $W_C\in \W$,  since $y$ is $\W$-epic, we can get the following commutative diagram
$$\xymatrix{
V_C \ar[r] \ar[d] &W_C \ar[r] \ar[d]^w &C \ar@{=}[d] \ar@{-->}[r] &\\
A \ar[r]_x &B \ar[r]_y &C \ar@{-->}[r] &.
}
$$
By Lemma \ref{L4}, we can choose $w$ to make an $\EE$-triangle $V_C\longrightarrow A\oplus W_C\xrightarrow{\svecv{x}{w}} B\dashrightarrow$ which implies that $\underline x$ is a monomorphism in $\underline \h$.

Let $z:D\to B$ be a morphism in $\h$ such that $\underline {yz}=0$. Then we have a commutative diagram
$$\xymatrix{
D \ar[d]_{z} \ar[r]^{w_1} &W \ar[d]^{w_2}\\
B \ar[r]_{y} &C
}
$$
with $W\in \W$. Since $y$ is $\W$-epic, there is a morphism $w_3:W\to B$ such that $w_2=yw_3$. Then $y(z-w_3w_1)=0$. There is a morphism $d:D\to A$ such that $xd=z-w_3w_1$. Hence $\underline {xd}=\underline z$, which implies that $\underline x$ is the kernel of $\underline y$.
\end{proof}

According to this lemma, we can get the following corollary.

\begin{cor}\label{serre}
Let $A\xrightarrow{x} B\xrightarrow{y} C\dashrightarrow$ be a nonsplit $\mathbb{F}$-triangle. Then $A,B,C$ do not belong to $\W$.
\end{cor}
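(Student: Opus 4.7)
The plan is to argue by contradiction, ruling out each of the three possibilities $A\in\W$, $C\in\W$, and $B\in\W$ separately. The underlying principle for the two endpoint cases is that the $\W$-monicity of $x$ (respectively the $\W$-epicity of $y$) in Definition~\ref{DF1} is strong enough to split that morphism whenever its source (respectively its target) already lies in $\W$, and in an extriangulated category a split monomorphism or split epimorphism in an $\EE$-triangle forces the extension class to vanish.

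Concretely, if $A\in\W$, I would feed $W=A$ into the $\W$-monic property of $x$: the surjection $\Hom_\B(B,A)\twoheadrightarrow\Hom_\B(A,A)$ provides a retraction $r\colon B\to A$ with $rx=\id_A$, so $x$ is a split monomorphism and the $\EE$-triangle splits, contradicting the hypothesis. Dually, if $C\in\W$, I apply the $\W$-epic property of $y$ to $W=C$ to lift $\id_C$ to a section $s\colon C\to B$ satisfying $ys=\id_C$; this makes $y$ a split epimorphism and gives the symmetric contradiction.

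The middle-object case $B\in\W$ is the main obstacle, since neither the $\W$-monic nor the $\W$-epic property alone immediately yields a splitting. Here I would instead invoke Lemma~\ref{ick}, which already tells us that $A\xrightarrow{\underline x}B\xrightarrow{\underline y}C$ is a kernel-cokernel pair in $\underline{\h}$. Because $B\in\W$, the image of $B$ in $\underline{\h}$ is a zero object, and hence the monomorphism $\underline x\colon A\to 0$ forces $\underline A=0$. This means $\id_A$ factors through some $W\in\W$, exhibiting $A$ as a direct summand of $W$; since $\W=\T\cap\U$ is closed under direct summands (as both $\T$ and $\U$ are, by the very definition of a cotorsion pair), we conclude $A\in\W$, which reduces the situation to the already-treated first case and completes the proof.
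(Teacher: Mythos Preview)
Your proof is correct and follows essentially the same strategy as the paper: the endpoint cases $A\in\W$ and $C\in\W$ are handled via the $\W$-monic/$\W$-epic conditions exactly as you do, and for $B\in\W$ the paper likewise invokes the kernel--cokernel pair from Lemma~\ref{ick} to force $A$ (and $C$) to become zero in $\underline{\h}$, hence to lie in $\W$, yielding the contradiction. Your version is slightly more explicit in justifying why $\underline{A}=0$ implies $A\in\W$ (via closure of $\W$ under direct summands), but the argument is the same.
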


\begin{proof}
If $A\in \W$, since $x$ is $\W$-monic, $1:A\to A$ factors through $x$, which means this $\mathbb{F}$-triangle splits, a contradiction. Hence $A\notin \W$. By the similar argument we can obtain that $C\notin \W$.

If $B\in \W$, by Lemma \ref{exten}, $A\to 0\to B$ is a kernel-cokernel pair in $\underline \h$, which implies that $A,C$ becomes zero objects in $\underline \h$. Then $A,C\in \W$, a contradiction.
\end{proof}

By the definition of $\mathfrak{F}$, we can get that $\W$ is a subcategory of projective-injective objects in $(\h,\mathbb{F},\mathfrak{s}|_{\mathbb{F}})$. A well-known result by Happel \cite{Ha} is that the stable category of a Frobenius exact category is a triangulated category. On extriangulated categories, we have the following generalized result:

\begin{prop}\cite[Proposition 3.30]{NP} and \cite[Theorem 3.13]{ZZ1}
Let $\mathcal I\subseteq \B$ be a subcategory of projective-injective objects. Then the ideal quotient $\B/\mathcal I$ has the structure of an extriangulated category, induced from that of $\B$.
\end{prop}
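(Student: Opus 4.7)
The plan is to construct the extriangulated structure on $\B/\mathcal{I}$ directly: first define an additive bifunctor $\overline{\EE}$ on $\B/\mathcal{I}$, then define a realization $\overline{\mathfrak{s}}$ by pushing $\mathfrak{s}$ through the quotient functor $\pi$, and finally verify the axioms (ET1)--(ET4) (plus their duals) of \cite[Definition 2.12]{NP}. The critical mechanism that makes all three parts work is that the projective-injective objects ``see no extensions''.

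First I would establish the following key vanishing: for every $I \in \mathcal{I}$ and every $X \in \B$ one has $\EE(X,I) = 0 = \EE(I,X)$. Indeed, if $\delta \in \EE(X,I)$ is realized by an $\EE$-triangle $I \xrightarrow{x} B \to X \dashrightarrow$, then the injectivity of $I$ (Definition \ref{dein}(2)) lifts $1_I$ along $x$, forcing $x$ to be split monic and $\delta = 0$; dually for $\EE(I,X)$ using projectivity. Consequently, if a morphism $f \colon A \to A'$ factors through some object of $\mathcal{I}$, then $f_* \colon \EE(C,A) \to \EE(C,A')$ vanishes, and symmetrically for $g^*$ when $g$ factors through $\mathcal{I}$. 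This shows the bifunctor $\EE$ descends unambiguously to a bifunctor $\overline{\EE} \colon (\B/\mathcal{I})^{\op} \times (\B/\mathcal{I}) \to \mathbf{Ab}$ given on objects by $\overline{\EE}(\overline{C},\overline{A}) = \EE(C,A)$.

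Next I would define the realization $\overline{\mathfrak{s}}$: for $\delta \in \overline{\EE}(\overline{C},\overline{A}) = \EE(C,A)$, pick any representative $A \xrightarrow{x} B \xrightarrow{y} C$ of $\mathfrak{s}(\delta)$ in $\B$, and set $\overline{\mathfrak{s}}(\delta) = [\,\overline{A} \xrightarrow{\overline{x}} \overline{B} \xrightarrow{\overline{y}} \overline{C}\,]$. Independence of the representative is immediate because an isomorphism in $\B$ between two realizations in $\B$ projects to an isomorphism in $\B/\mathcal{I}$ giving the required equivalence. The axioms (ET1) (bifunctoriality) and (ET2) (direct sums of triangles) follow straight from the corresponding statements in $\B$ and from additivity of the quotient functor $\pi$.

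The main obstacle, and the place where the projective-injectivity of $\mathcal I$ is really used, lies in (ET3) and (ET4): a commutative diagram in $\B/\mathcal{I}$ lifted to $\B$ need not commute on the nose, only up to a morphism factoring through $\mathcal{I}$. To handle this, given $\overline{a},\overline{c}$ with $\overline{c}\,\overline{y} = \overline{y}'\,\overline{a}$, I would lift to representatives $a,c$ in $\B$; then the discrepancy $cy - y'a \colon B \to C'$ factors as $B \xrightarrow{\alpha} I \xrightarrow{\beta} C'$ with $I \in \mathcal I$. Using projectivity of $I$, the map $\beta$ lifts through $y'$ to some $\beta' \colon I \to B'$, and replacing $a$ by $a + \beta'\alpha$ (which has the same image in $\B/\mathcal{I}$) yields a strictly commutative square in $\B$, to which (ET3) for $\B$ applies, producing the required morphism of $\overline{\mathfrak{s}}$-triangles after projection. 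For (ET4) I would apply the same strict-lift strategy to build the relevant octahedron in $\B$ and then push it down; its $\W$-monic/$\W$-epic style side conditions are not needed here since the ambient axioms only require existence, not any $\W$-condition. The dual axioms (ET3)$^{\op}$ and (ET4)$^{\op}$ are handled symmetrically using injectivity of $\mathcal I$. Once these verifications are in place, $(\B/\mathcal{I},\overline{\EE},\overline{\mathfrak{s}})$ is by definition an extriangulated category, and by construction the quotient functor $\pi$ carries $\EE$-triangles to $\overline{\mathfrak{s}}$-triangles, completing the proof.
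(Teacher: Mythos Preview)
The paper does not supply its own proof of this proposition; it is simply quoted from \cite[Proposition 3.30]{NP} and \cite[Theorem 3.13]{ZZ1}. Your outline is the standard one used in those references and is essentially correct: use projectivity/injectivity of $\mathcal I$ to see that $\EE(-,I)$ and $\EE(I,-)$ vanish, hence $\EE$ descends to a bifunctor $\overline{\EE}$ on $\B/\mathcal I$; push $\mathfrak{s}$ through the quotient functor to define $\overline{\mathfrak{s}}$; and verify the axioms by lifting diagrams to $\B$, correcting the $\mathcal I$-discrepancy, applying the axiom there, and projecting back.

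Two small remarks, neither a genuine gap. First, your description of (ET3) has the roles of the maps scrambled: in \cite[Definition 2.12]{NP} one is given $a\colon A\to A'$ and $b\colon B\to B'$ with $x'a=bx$ and must produce $c\colon C\to C'$. The correction step therefore uses \emph{injectivity} of $\mathcal I$ to extend the $\mathcal I$-factor of $x'a-bx$ along the inflation $x$, adjusting $b$ rather than $a$; projectivity is what handles (ET3)$^{\op}$. Second, (ET4) actually needs no lift-and-adjust at all: by definition of $\overline{\mathfrak{s}}$, the two given $\overline{\mathfrak{s}}$-triangles already come from honest $\EE$-triangles in $\B$ sharing the object $B$, so one applies (ET4) directly in $\B$ and pushes the resulting diagram through $\pi$.
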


For $(\h,\mathbb{F},\mathfrak{s}|_{\mathbb{F}})$  and $\W$, the ``$\EE$-triangles" in $\underline \h$ are just the image of $\mathbb{F}$-triangles under the canonical quotient functor $\pi:\h\to \underline \h$. We denote the class of these ``$\EE$-triangles" by $\underline {\mathfrak{F}}$. For any sequence $A\xrightarrow{\underline a} B\xrightarrow{\underline b} C$ in $\underline {\mathfrak{F}}$, $\underline a$ is a monomorphism and $\underline b$ is an epimorphism. Then by \cite[Corollary 3.18]{NP}, $\underline \h$ has an exact category structure in the sense of \cite{Bu} and the class of short exact sequences is just $\underline {\mathfrak{F}}$. Thus $(\underline \h,\underline {\mathfrak{F}})$ is an exact category.


\begin{thm}\label{ck}
$(\underline \h,\underline {\mathfrak{F}})$ is the largest exact category structure on $\underline \h$ in the sense that any kernel-cokernel pair in $\underline \h$ belongs to $\underline {\mathfrak{F}}$.
\end{thm}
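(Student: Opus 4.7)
The inclusion $\underline{\mathfrak{F}} \subseteq \{\text{kernel-cokernel pairs in } \underline{\h}\}$ is already given by Lemma \ref{ick}. My plan is therefore to prove the reverse: given any kernel-cokernel pair $A \xrightarrow{\underline a} B \xrightarrow{\underline b} C$ in $\underline{\h}$, I will construct an $\mathbb{F}$-triangle in $\h$ whose image under $\pi$ is isomorphic, as a pair, to $(\underline a, \underline b)$.

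First I would fix representatives $a: A \to B$ and $b: B \to C$ in $\B$. Since $\underline b \underline a = 0$, we have a factorization $ba = \psi \phi$ through some $W_0 \in \W$, so that $\svecv{a}{\phi}: A \to B \oplus W_0$ and $\svech{b}{-\psi}: B \oplus W_0 \to C$ form a genuine complex in $\B$ whose projection to $\underline{\h}$ is unchanged. To upgrade this complex to an $\mathbb{F}$-triangle, I will use the defining data of the heart: since $A \in \B^-$, there is an $\EE$-triangle $A \xrightarrow{\alpha_A} W_A \to S_A \dashrightarrow$ with $W_A \in \W$ and $S_A \in \s$, and since $C \in \B^+$, there is an $\EE$-triangle $V_C \to W_C \xrightarrow{\beta_C} C \dashrightarrow$ with $V_C \in \V$ and $W_C \in \W$. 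The cotorsion vanishing $\EE(S_A, \W) \subseteq \EE(\s,\T) = 0$ makes $\alpha_A$ a $\W$-monic, while $\EE(\W, V_C) \subseteq \EE(\U,\V) = 0$ makes $\beta_C$ a $\W$-epic.

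The core of the proof is to glue these data together. Augmenting $\svecv{a}{\phi}$ by the further component $\alpha_A$ produces a $\W$-monic map $A \to B \oplus W_0 \oplus W_A$; dually, augmenting the codomain of $\svech{b}{-\psi}$ using $\beta_C$ produces a $\W$-epic companion. Applying Lemma \ref{L4} (and its dual) to the triangles $A \to W_A \to S_A \dashrightarrow$ and $V_C \to W_C \to C \dashrightarrow$ with the maps $a$ and $b$, respectively, I assemble these augmentations into a single $\EE$-triangle $A' \to Y \to C' \dashrightarrow$ in which $A', C'$ differ from $A, C$ only by direct summands in $\W$ (so they project to the original objects in $\underline{\h}$) and whose first map is $\W$-monic and second map is $\W$-epic. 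By Lemma \ref{exten}, the middle term $Y$ then lies in $\h$, so the triangle is genuinely an $\mathbb{F}$-triangle. By Lemma \ref{ick} its projection is a kernel-cokernel pair; since this pair has the same kernel $\underline a$, uniqueness of cokernels in the preabelian category $\underline{\h}$ forces it to be isomorphic to $(\underline a, \underline b)$.

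The main obstacle I anticipate is precisely the gluing step: forcing $\W$-monicity on the inflation side by adding the $W_A$-component may a priori disturb the $\W$-epic condition on the deflation side, and vice versa, and one must also verify that the middle object lands in $\h$ rather than merely in $\B^-$ or $\B^+$. Handling this requires a careful, possibly iterated, invocation of Lemma \ref{L4} together with its dual, using the compatibility equation $ba = \psi\phi$ to align the two augmentations. Everything else — the cotorsion vanishing computations that make $\alpha_A$ $\W$-monic and $\beta_C$ $\W$-epic, the extension closure of $\h$ under $\mathbb{F}$-triangles, and the cokernel uniqueness at the end — is routine once the compatible $\mathbb{F}$-triangle has been produced.
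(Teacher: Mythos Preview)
Your overall shape is right --- build an $\mathbb{F}$-triangle whose image in $\underline\h$ is the given pair --- but the gluing step as you describe it does not go through. Applying Lemma~\ref{L4} to $A\to W_A\to S_A\dashrightarrow$ along $a$ yields a pushout $\EE$-triangle $A\to W_A\oplus B\to E\dashrightarrow$ together with $B\to E\to S_A\dashrightarrow$; the dual applied to $V_C\to W_C\to C\dashrightarrow$ along $b$ yields a pullback $\EE$-triangle $F\to W_C\oplus B\to C\dashrightarrow$. These are two separate triangles sitting on opposite sides of $B$, and nothing in the axioms of an extriangulated category lets you splice them into a single $\EE$-triangle $A'\to Y\to C'\dashrightarrow$ with $Y$ a $\W$-thickening of $B$. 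The relation $ba=\psi\phi$ is a relation between morphisms, not between extensions, and produces no element of any $\EE$-group; so there is simply no candidate $\EE$-triangle to which your ``iterated'' Lemma~\ref{L4} could be applied.

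The paper's argument is asymmetric and relies on an external input you have not invoked. One only performs the pushout of $A\to W^A\to S^A\dashrightarrow$ along $x$, obtaining $B\xrightarrow{y_1}C_1\to S^A\dashrightarrow$; then $C_1$ is reflected into $\h$ via a further $\s$-quotient, producing $C'\in\h$ and an $\EE$-triangle $B\xrightarrow{y'}C'\to S'\dashrightarrow$ with $S'\in\s$. The key point, taken from the dual of \cite[Proposition~2.25]{LN}, is that $\underline{y'}$ \emph{is} the cokernel of $\underline x$ in $\underline\h$ --- this is how one knows the construction recovers the given pair. Only then does one pull back along the $\B^+$-datum $V_1\to W_1\to C'\dashrightarrow$ of the \emph{new} object $C'$ (not of $C$) to obtain the $\mathbb{F}$-triangle $A'\to W_1\oplus B\to C'\dashrightarrow$, and a separate computation checks $\W$-epicness of the deflation. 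In particular neither $A'$ nor $C'$ has the form $A\oplus W$ or $C\oplus W$ in $\B$; they are only isomorphic to $A$, $C$ in $\underline\h$, so your expectation there is also too strong.
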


\begin{proof}
We have known that $(\underline \h,\underline {\mathfrak{F}})$ is an exact category. Let $A\xrightarrow{\underline x} B\xrightarrow{\underline y} C$ be a kernel-cokernel pair in
$\underline{\mathcal H}$. Since $A\in \h$, it admits an $\EE$-triangle $A\to W^A \to S^A \dashrightarrow$
with $S^A\in \s$ and $W^A\in \W$. Then we have the following commutative diagram of $\EE$-triangles
$$\xymatrix{
A\ar[r] \ar[d]_x &W^A \ar[r] \ar[d] &S^A \ar@{=}[d] \ar@{-->}[r] &\\
B\ar[r]_{y_1} &C_1 \ar[r] &S^A \ar@{-->}[r] &
}
$$
with $C_1\in \B^-$ by Lemma \ref{+-}. $C_1$ admits the following commutative diagram
$$\xymatrix{
V_1 \ar[r] \ar@{=}[d] &U_1 \ar[r] \ar[d] &C_1 \ar[d]^{y_2} \ar@{-->}[r] &\\
V_1 \ar[r] &W_1 \ar[r] \ar[d] &C' \ar[d] \ar@{-->}[r] &\\
&S_1 \ar@{-->}[d] \ar@{=}[r] &S_1 \ar@{-->}[d]\\
&&
}
$$
with $U_1\in \U$, $S_1\in \s$, $V_1\in \V$, $W_1\in \W$ and $C'\in \h$. By the dual of \cite[Proposition 2.25]{LN}, $\underline {y_2y_1}$ is the cokernel of $\underline x$. Denote $y_2y_1$ by $y'$, we have the following commutative diagram
$$\xymatrix{
B \ar[r]^{y_1} \ar@{=}[d] &C_1 \ar[r] \ar[d]^{y_2} &S^A \ar[d] \ar@{-->}[r] &\\
B \ar[r]_{y'} &C' \ar[r] \ar[d] &S' \ar[d] \ar@{-->}[r] &\\
&S_1 \ar@{=}[r] \ar@{-->}[d]  &S_1 \ar@{-->}[d]\\
&&
}
$$
with $S'\in \s$. Since $C'$ admits an $\EE$-triangle $V_1\to W_1\xrightarrow{w_2} C'\dashrightarrow$ with $V_1\in \V$ and $W_1\in \W$, we have the following commutative diagram
$$\xymatrix{
V_1 \ar@{=}[r] \ar[d] &V_1 \ar[d]\\
A' \ar[r]^{w_1} \ar[d]_{x'} &W_1 \ar[r]^{s_1} \ar[d]^{w_2} &S' \ar@{=}[d] \ar@{-->}[r]^{\delta} &\\
B \ar[r]_{y'} \ar@{-->}[d]  &C' \ar[r]_{s_2} \ar@{-->}[d] &S'\ar@{-->}[r]^{x'_*\delta} &\\
&&
}
$$
with $A'\in \h$ by Lemma \ref{+-}. By Lemma \ref{L4}, there is a morphism $w_2':W_1\to C'$ and a commutative diagram of $\EE$-triangles
$$\xymatrix{
A' \ar[r]^{w_1} \ar[d]_{x'} &W_1 \ar[r]^{s_1} \ar[d]^{w_2'} &S' \ar@{=}[d] \ar@{-->}[r]^{\delta} &\\
B \ar[r]_{y'} &C' \ar[r]_{s_2} &S'\ar@{-->}[r]^{x'_*\delta} &
}
$$
which induces an $\EE$-triangle
$$\xymatrix{A' \ar[r]^-{\svecv{w_1}{-x'}} &W_1\oplus B \ar[r]^-{\svech{w_2'}{y'}} &C' \ar@{-->}[r] &.}$$
By Proposition \ref{exs}, we have the following exact sequence
$$\Hom_\B(W_1,\W)\xrightarrow{\Hom_\B(w_1,\W)}\Hom_\B(A',\W)\longrightarrow \EE(S',\W)=0$$
which implies that $w_1$ is $\W$-monic. Then $\svecv{w_1}{-x'}$ is also $\W$-monic. Now we show $\svech{w_2'}{y'}$ is $\W$-epic. Since $s_2w_2'=s_1=s_2w_2$, there is a morphism $w:W_1\to B$ such that $w_2-w_2'=y'w$. For any morphism $w':W'\to C'$ with $W'\in \W$, since $\EE(W',V_1)=0$, there is a morphism $u:W'\to W_1$ such that $w_2u=w'$. Then $w'=y'wu+w_2'u$ and we have the following commutative diagram
$$\xymatrix{
&&W' \ar[dll]_-{\svech{u}{wu}} \ar[d]^{w'}\\
W_1\oplus B \ar[rr]_-{\svech{w_2'}{y'}} &&C'.
}
$$
Hence $\svech{w_2'}{y'}$ is $\W$-epic. By Lemma \ref{+-} (2) and (4), $-\underline {x'}$ is a monomorphism and $\underline {y'}$ is an epimorphism. Then $A'\xrightarrow{-\underline {x'}} B\xrightarrow{\underline {y'}}C'$ belongs to $\underline {\mathfrak{F}}$.
$A'\xrightarrow{-\underline {x'}} B\xrightarrow{\underline {y'}}C'$ is a kernel-cokernel pair by Lemma \ref{ick} and it is isomorphic to $A\xrightarrow{\underline x} B\xrightarrow{\underline y} C$. Since $(\underline \h,\underline {\mathfrak{F}})$ is an exact category, $\underline {\mathfrak{F}}$ is closed under isomorphisms. Then $A\xrightarrow{\underline x} B\xrightarrow{\underline y} C$ belongs to $\underline {\mathfrak{F}}$.
\end{proof}

\begin{rem}
By Lemma \ref{ick} and Theorem \ref{ck}, we know that a sequence $A\xrightarrow{\underline x} B\xrightarrow{\underline y} C$ is a kernel-cokernel pair if and only if it is the image of an $\mathbb{F}$-triangle by canonical quotient functor $\pi:\B\to \uB$.
\end{rem}

The following lemma is needed in the proofs of the main results.

\begin{lem}\label{kc}
Let $\mathcal A$ be a preabelian category.
\begin{itemize}
\item[(1)] If $k:K\to A$ is the kernel of a morphism $f:A\to B$ and $c:A\to C$ is the cokernel of $k$, then $K\xrightarrow{k} A\xrightarrow{c} C$ is a kernel-cokernel pair.
\item[(2)] If $k:K\to A$ is both an epimorphism and a kernel, then it is an isomorphism.
\end{itemize}
\end{lem}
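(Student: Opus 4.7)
Both parts are essentially formal consequences of the universal properties of kernels and cokernels in a preabelian category; I would expect no step to invoke anything specific about twin cotorsion pairs.

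For part (1), my plan is to exhibit $k$ itself as a kernel of $c$. First I would use that $fk = 0$ (since $k = \ker f$) together with the universal property of the cokernel $c$ to produce a factorization $f = hc$ for some $h \colon C \to B$. Next I would take an arbitrary kernel $k' \colon K' \to A$ of $c$ (which exists since $\mathcal A$ is preabelian) and observe that $fk' = hck' = 0$, which forces $k'$ to factor uniquely through $k = \ker f$ as $k' = kv$. Symmetrically, $ck = 0$ provides a unique factorization $k = k'u$ of $k$ through $k'$. I would then check, using that $k$ and $k'$ are both monomorphisms (being kernels), that the two comparison morphisms $u$ and $v$ are mutually inverse, so $k$ and $k'$ represent the same kernel of $c$. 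Combined with the hypothesis that $c$ is a cokernel of $k$, this gives the desired kernel-cokernel pair.

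For part (2), I would start from the assumption that $k$ is the kernel of some morphism $f \colon A \to B$. Since $fk = 0$ and $k$ is epi, $f = 0$. I would then invoke the elementary observation that the kernel of the zero morphism $0 \colon A \to B$ is the identity $1_A$, so by uniqueness of kernels up to unique isomorphism, $k$ must be an isomorphism.

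I do not anticipate a substantive obstacle: the only thing to be careful of is unpacking the universal properties correctly in part (1) so that the comparison maps $u$ and $v$ really are mutually inverse, which comes down to applying the monomorphism property of $k$ and $k'$ to the identities $k(vu) = k$ and $k'(uv) = k'$.
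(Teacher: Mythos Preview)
Your proposal is correct and follows essentially the same line as the paper. The only cosmetic differences are that in part~(1) the paper verifies the universal property of the kernel for $k$ directly (any $g$ with $cg=0$ factors through $k$) rather than comparing with an auxiliary kernel $k'$, and in part~(2) the paper first invokes part~(1) to replace $f$ by the cokernel $c$ of $k$ before concluding that morphism is zero, whereas you argue with $f$ itself; neither difference is substantive.
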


\begin{proof}
(1) We only need to show that $k$ is the kernel of $c$. First there exists a morphism $b:C\to B$ such that $f=bc$. Let $g:D\to A$ be a morphism such that $cg=0$, then $bcg=fg=0$. Hence there exits a morphism $d:D\to K$ such that $dg=k$, which implies that $k$ is the kernel of $c$.\\
(2) By the result we just proved, $k$ admits a kernel-cokernel pair $K\xrightarrow{k} A\xrightarrow{c} C$. Since $k$ is an epimorphism, we get that $c=0$. $0:A\to C$ has a kernel $1:A\to A$, hence we have a commutative diagram
$$\xymatrix{
K \ar[r]^k \ar[d]_{\cong} &A \ar[r]^0 &C\\
A \ar@{=}[ur]
}$$
which implies that $k$ is an isomorphism.
\end{proof}

\begin{rem}\label{cond}
There are several equivalent conditions for a preabelian category $\mathcal A$ being abelian. One is that any epic-monic morphism is an isomorphism. By the dual of Lemma \ref{kc} (2), we can get the following condition for $\mathcal A$ being abelian: any epimorphism in $\mathcal A$ is a cokernel. Note that in an abelian category, epimorphisms are always cokernels.
\end{rem}

Now we can show the following theorem.

\begin{thm}\label{main}
$\underline \h$ is quasi-abelian.
\end{thm}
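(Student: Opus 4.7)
The plan is to leverage Theorem \ref{ck} together with the classical pullback/pushout stability axioms for Quillen exact categories. First I would establish the crucial identification: in the preabelian category $\underline{\h}$, the cokernels coincide with the deflations (admissible epimorphisms) of the exact structure $(\underline{\h},\underline{\mathfrak{F}})$, and dually the kernels coincide with the inflations. Indeed, any cokernel $c: A \to C$ admits a kernel $k: K \to A$ since $\underline{\h}$ is preabelian, and by the dual of Lemma \ref{kc}(1), the sequence $K \xrightarrow{k} A \xrightarrow{c} C$ is a kernel--cokernel pair; Theorem \ref{ck} then forces this pair to lie in $\underline{\mathfrak{F}}$, so $c$ is a deflation. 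Conversely, every deflation is by construction a cokernel. The dual argument identifies kernels with inflations.

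Next, given a pullback square in $\underline{\h}$
$$
\xymatrix{
A \ar[r]^{\mathbf{a}} \ar[d]_{\mathbf{b}} & B \ar[d]^{\mathbf{c}}\\
C \ar[r]_{\mathbf{d}} & D
}
$$
with $\mathbf{d}$ a cokernel, the identification above turns $\mathbf{d}$ into a deflation. The standard pullback axiom for exact categories (see \cite{Bu}) says the pullback of $\mathbf{d}$ along $\mathbf{c}$ exists in $(\underline{\h},\underline{\mathfrak{F}})$ and the morphism parallel to $\mathbf{d}$ in that pullback is again a deflation. Since pullbacks in the preabelian category coincide with those in the ambient additive category, this parallel morphism must be precisely $\mathbf{a}$ (up to canonical isomorphism); so $\mathbf{a}$ is a deflation and hence a cokernel, which is the left quasi-abelian condition.

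The right quasi-abelian condition is completely dual: for a pushout square with $\mathbf{d}$ a kernel, the identification turns $\mathbf{d}$ into an inflation, the pushout axiom for $(\underline{\h},\underline{\mathfrak{F}})$ makes the parallel morphism $\mathbf{a}$ an inflation, and then $\mathbf{a}$ is a kernel. Combining the two halves yields the full quasi-abelianness of $\underline{\h}$.

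The only non-routine step is the identification of preabelian cokernels (respectively kernels) with the deflations (respectively inflations) of $\underline{\mathfrak{F}}$, and I expect this to be the main obstacle to verify carefully; it is where the whole argument rests, and it depends essentially on the maximality built into Theorem \ref{ck}. Once that identification is in place, the rest is a formal application of the Quillen exact category axioms and the uniqueness of universal pullbacks/pushouts in preabelian categories.
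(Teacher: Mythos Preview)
Your proposal is correct and follows essentially the same route as the paper: identify kernels (respectively cokernels) in $\underline{\h}$ with inflations (respectively deflations) of $(\underline{\h},\underline{\mathfrak{F}})$ via Lemma \ref{kc}(1) and Theorem \ref{ck}, then invoke the pushout/pullback stability axioms of an exact category. The paper carries this out for the right quasi-abelian half (pushout of a kernel) and dualizes, whereas you spell out the left half (pullback of a cokernel) first, but the argument is the same; your worry about the identification step being ``non-routine'' is unfounded, since Theorem \ref{ck} makes it immediate.
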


\begin{proof}
Let $\underline i:A\to B$ be a kernel in $\underline \h$. Since $\underline \h$ is preabelian, $\underline i$ admits a kernel-cokernel pair by Lemma \ref{kc} (1), which also means that it admits a short exact sequence
$$\xymatrix{A \ar[r]^{\underline i} &B \ar[r]^{\underline p} &C}$$
in $(\underline \h,\underline {\mathfrak{F}})$. For any push-out diagram
$$\xymatrix{
A \ar[r]^{\underline i} \ar[d] &B \ar[d]\\
A' \ar[r]_{\underline {i'}} &B'
}
$$
by the definition of exact category, $\underline {i'}$ admits a short exact sequence $A' \xrightarrow{\underline {i'}} B'\xrightarrow{\underline {p'}} C'$. Hence $\underline {i'}$ is a kernel, which implies that $\underline \h$ is right quasi-abelian. By the similar method we can show that $\underline \h$ is left quasi-abelian. Thus $\underline \h$ is quasi-abelian.
\end{proof}

\section{Abelian Hearts}

The hearts of twin cotorsion pairs are not always abelian (see Example \ref{ex1}). We will give a necessary condition for the hearts being abelian first, after some preparations.

Since $(\s,\T)$ and $(\U,\V)$ are cotorsion pairs, they have abelian hearts by \cite[Theorem 3.2]{LN}. For convenience, we introduce the following notations:
\begin{itemize}
\item[(a1)] $\B^+_1=\{X\in \B \text{ }|\text{ } \mbox{there exists an}~\text{ } \EE\text{-triangle } T\to M\to X\dashrightarrow \text{, }T\in \T \text{ and }M\in \s\cap\T \}$;
\item[(a2)] $\B^-_1=\{Y\in \B \text{ }|\text{ } \mbox{there exists an}~ \text{ } \EE\text{-triangle } Y\to M'\to S\dashrightarrow \text{, }S\in \s \text{ and }M'\in \s\cap\T \}$;
\item[(a3)] $\h_1=\B^+_1\cap \B^-_1$;
\item[(b1)] $\B^+_2=\{X\in \B \text{ }|\text{ } \mbox{there exists an}~\text{ } \EE\text{-triangle } V\to N\to X\dashrightarrow \text{, }V\in \V \text{ and }N\in \U\cap\V \}$;
\item[(b2)] $\B^-_2=\{Y\in \B \text{ }|\text{ } \mbox{there exists an}~ \text{ } \EE\text{-triangle } Y\to N'\to U\dashrightarrow \text{, }U\in \U \text{ and }N'\in \U\cap\V \}$;
\item[(b3)] $\h_2=\B^+_2\cap \B^-_2$.
\end{itemize}
By definition $\h_1/(\s\cap \T)$ is the heart of $(\s,\T)$, and $\h_2/(\U\cap \V)$ is the heart of $(\U,\V)$. We also have $\B^-_1\subseteq \B^-$ and $\B^+_2\subseteq \B^+$.

\begin{lem}\label{L41}
$\B^+\subseteq \B^+_1$ and $\B^-\subseteq \B^-_2$.
\end{lem}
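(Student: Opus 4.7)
The plan is to prove the inclusion $\B^+\subseteq \B^+_1$ in detail; the inclusion $\B^-\subseteq \B^-_2$ will then follow by a completely dual argument, using the cotorsion pair $(\U,\V)$ and Lemma~\ref{L1}(c) applied to $\U$ in place of $\T$.

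First, I fix $X\in \B^+$ and pick a defining $\EE$-triangle $V\to W\to X\dashrightarrow$, so $V\in \V$ and $W\in \W=\T\cap \U$. Because $\V\subseteq \T$ (one of the equivalent forms of the twin cotorsion pair condition recorded in Section~2), both $V$ and $W$ lie in $\T$. Next I apply the cotorsion pair $(\s,\T)$ to $W$ to obtain a second $\EE$-triangle
$$T_W\xrightarrow{c} S_W\xrightarrow{d} W\dashrightarrow$$
with $T_W\in \T$ and $S_W\in \s$. Since $W\in \T$ and $T_W\in \T$, extension-closure of $\T$ (Lemma~\ref{L1}(c)) forces $S_W\in \T$, hence $S_W\in \s\cap \T$, the class required to land in $\B^+_1$.

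The final step is to glue the two $\EE$-triangles along the common object $W$ (appearing as the end of the first and the middle of the second). Applying the dual octahedral axiom (ET4)$^{\op}$ from \cite{NP} to this configuration (equivalently, pulling the triangle $V\to W\to X\dashrightarrow$ back along $d\colon S_W\to W$ via the dual of Lemma~\ref{L4} and reading off the resulting $3\times 3$ diagram), I extract an object $K$ together with two new $\EE$-triangles
$$T_W\to K\to V\dashrightarrow \qquad\text{and}\qquad K\to S_W\to X\dashrightarrow$$
compatible with the original data. Since $T_W\in \T$ and $V\in \T$, the first of these triangles combined with Lemma~\ref{L1}(c) yields $K\in \T$, and the second triangle then exhibits $X\in \B^+_1$ via $K\in \T$ and $S_W\in \s\cap \T$.

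The main point to handle carefully is the octahedral construction itself: one must arrange the diagram so that $K$ ends up precisely as an extension of $V$ by $T_W$ (rather than, say, as an extension involving $S_W$ or $W$ directly), since only then does extension-closure of $\T$ close the argument. Once the correct two rows/columns of the $3\times 3$ diagram are identified, everything else is formal.
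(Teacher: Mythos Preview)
Your argument is correct. The application of (ET4)$^{\op}$ to the pair of deflations $S_W\xrightarrow{d}W$ and $W\to X$ indeed produces an $\EE$-triangle $K\to S_W\to X\dashrightarrow$ together with $T_W\to K\to V\dashrightarrow$, and extension-closure of $\T$ (Lemma~\ref{L1}(c)) then gives $K\in\T$ and $S_W\in\s\cap\T$ as you claim.

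The paper takes a slightly different route: instead of resolving $W$, it applies the cotorsion pair $(\s,\T)$ directly to $X$, obtaining $T\to S\to X\dashrightarrow$ with $T\in\T$, $S\in\s$. The corresponding $3\times3$ diagram then contains a row $V\to M\to S\dashrightarrow$ with $M\in\T$ (as an extension of $W$ by $T$); since $\EE(\s,\V)=0$ this row splits, so $S$ is a summand of $M\in\T$, and closure of $\T$ under direct summands yields $S\in\s\cap\T$. Thus the paper's triangle ending at $X$ is immediate but showing its middle term lies in $\s\cap\T$ requires the splitting and direct-summand argument, whereas your approach needs the extra (ET4)$^{\op}$ step to reach $X$ but then uses only extension-closure of $\T$, avoiding any appeal to summand-closure. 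Both are short; your version is marginally more robust in that it does not invoke the summand-closed hypothesis on $\T$.
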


\proof For an object $X\in \B^{+}$,
there exists an $\EE$-triangle
$V\to W\to X\dashrightarrow$
where $V\in\V$ and $W\in\W$.
Since $(\s,\T)$ is a cotorsion pair,
there exists an  $\EE$-triangle
$T\to S\to X\dashrightarrow$
where $T\in\T$ and $S\in\s$. We can obtain the following commutative diagram
$$\xymatrix{
&T\ar@{=}[r] \ar[d] &T \ar[d]\\
V\ar[r]\ar[d]&M\ar[r] \ar[d] &S \ar[d] \ar@{-->}[r] &\\
V\ar[r]&W \ar[r] \ar@{-->}[d] &X \ar@{-->}[r] \ar@{-->}[d]&\\
&&
}$$
with $M\in\T$. Since $\EE(S,V)=0$, the second row of the above diagram splits.
Thus we have $M\cong S\oplus V$.
Note that $\T$ is closed under direct summands,
we have $S\in\T$.
Hence $S\in\s\cap\T$, which implies that $X\in\B^+_1$.
Dually, one can show that $\B^-\subseteq \B^-_2$.
\qed

\begin{lem}\label{FT}
Let $f:A\to B$ be a morphism in $\B$.
\begin{itemize}
\item[(1)] Assume that $A,B\in \h_1$, then $f$ factors through $\W$ if and only if it factors through $\s\cap \T$.
\item[(2)] Assume that $A,B\in \h_2$, then $f$ factors through $\W$ if and only if it factors through $\U\cap \V$.
\end{itemize}
\end{lem}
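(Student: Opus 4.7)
The plan is to observe first that the two ``if'' directions are immediate, since $\s\cap\T\subseteq \W$ (this uses $\s\subseteq \U$, so $\s\cap\T\subseteq \U\cap\T=\W$) and, dually, $\U\cap\V\subseteq \W$ (this uses $\V\subseteq \T$). Any factorization through an object of $\s\cap\T$ (resp.\ $\U\cap\V$) is therefore a factorization through an object of $\W$.

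For the ``only if'' direction of (1), suppose $f=gw$ with $w\colon A\to W$, $g\colon W\to B$, and $W\in\W$. Since $A\in\h_1\subseteq \B^-_1$, there is an $\EE$-triangle
$$A\xrightarrow{\ a\ } M'\to S\dashrightarrow$$
with $M'\in \s\cap\T$ and $S\in\s$. Because $W\in\W\subseteq \T$ and $(\s,\T)$ is a cotorsion pair, $\EE(S,W)=0$. Applying Proposition \ref{exs}(1) to this $\EE$-triangle and the object $W$ shows that $\Hom_{\B}(M',W)\to\Hom_{\B}(A,W)$ is surjective, so there exists $w'\colon M'\to W$ with $w=w'a$. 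Then $f=gw=(gw')a$ factors through $M'\in\s\cap\T$, as desired.

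For the ``only if'' direction of (2), the argument is symmetric, using the other endpoint. Starting from $f=gw$ with $W\in\W$, use $B\in\h_2\subseteq\B^+_2$ to pick an $\EE$-triangle
$$V\to N\xrightarrow{\ v\ } B\dashrightarrow$$
with $V\in\V$ and $N\in\U\cap\V$. Since $W\in\W\subseteq\U$ and $(\U,\V)$ is a cotorsion pair, $\EE(W,V)=0$; Proposition \ref{exs}(2) then gives the surjectivity of $\Hom_{\B}(W,N)\to\Hom_{\B}(W,B)$, so $g=vg''$ for some $g''\colon W\to N$. Hence $f=gw=v(g''w)$ factors through $N\in\U\cap\V$.

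There is no substantial obstacle here: the proof is a direct application of the Ext-vanishing built into the two cotorsion pairs together with the long exact sequences of Proposition \ref{exs}. The only point worth flagging is the asymmetry — in (1) one exploits the $\B^-_1$-approximation of the source $A$, whereas in (2) one exploits the $\B^+_2$-approximation of the target $B$ — since this is dictated by which side of the relevant cotorsion-pair vanishing is available.
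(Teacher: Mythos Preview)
Your proof is correct and follows essentially the same approach as the paper: the ``if'' direction via $\s\cap\T\subseteq\W$ (resp.\ $\U\cap\V\subseteq\W$), and the ``only if'' direction via the $\B^-_1$-resolution of $A$ together with $\EE(\s,\W)=0$ (resp., dually, the $\B^+_2$-resolution of $B$ together with $\EE(\W,\V)=0$). The paper only writes out (1) and leaves (2) to duality, whereas you spell out both; otherwise the arguments are identical.
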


\begin{proof}
We only show (1), the proof of (2) is by dual. Since $((\s,\T),(\U,\V))$ is a twin cotorsion pair,
we have $\s\subseteq\U$ which implies $$\s\cap\T\subseteq\U\cap\T=\W.$$
This shows that if $f$ factors through $\s\cap\T$, then it factors through
$\W$.

Conversely, assume that $f$ factors through $\W$.
Then there exist two morphisms $s\colon A\to W$ and
$t\colon W\to B$ such that $f=ts$ where $W\in\W$.

Since $A\in\h_1$, there exists an
$\EE$-triangle
$$A\xrightarrow{~g~}M\to S\dashrightarrow$$
where $M\in\s\cap\T$ and $S\in\s$. Since $\EE(S,W)=0$, there exists a morphism
$b\colon M\to W$ such that $s=bg$ and then
$f=ts=(tb)g$.
This shows that $f$ factors through $M\in \s\cap\T$.
\end{proof}

By this lemma, we have $\h_1/(\s\cap \T)=\underline {\h_1}$ and $\h_2/(\U\cap \V)=\underline {\h_2}$. Now we can state the necessary condition.

\begin{prop}\label{prop1}
If $\underline \h$ is abelian, then $\underline {\h_1}$ and $\underline {\h_2}$ are dense in $\underline \h$. Moreover, if $\B$ is Krull-Schmidt, $\underline \h=\underline {\h_1}\cap \underline {\h_2}$.
\end{prop}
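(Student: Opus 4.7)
The plan is to show, under the abelian hypothesis on $\underline \h$, that every $X \in \h$ is isomorphic in $\underline \h$ to some object of $\h_1 \cap \h$ and, dually, to some object of $\h_2 \cap \h$; the Krull-Schmidt equality then follows from uniqueness of direct-sum decompositions.

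For the density of $\underline{\h_1}$, given $X \in \h$ I first exploit $X \in \B^-$ to fix an $\EE$-triangle $X \to W \to S \dashrightarrow$ with $W \in \W$ and $S \in \s$. Since $W \in \T$, the cotorsion pair $(\s, \T)$ supplies an $\EE$-triangle $T \to M \to W \dashrightarrow$ with $T \in \T$ and $M \in \s$, and closure of $\T$ under extensions forces $M \in \s \cap \T$. Pulling $X \to W$ back along $M \to W$ via the dual of Lemma \ref{L4} and completing the $3 \times 3$ diagram by (ET4) produces an object $X_1$ fitting in $\EE$-triangles
\[
T \to X_1 \to X \dashrightarrow, \qquad X_1 \to M \to S \dashrightarrow.
\]
The second triangle shows $X_1 \in \B^-_1$; Lemma \ref{+-}(1) applied to the first, together with $X \in \B^+$, yields $X_1 \in \B^+$, hence $X_1 \in \B^+_1$ by Lemma \ref{L41}. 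Thus $X_1 \in \h_1 \cap \h$, and Lemma \ref{+-}(2) makes the induced $\underline \alpha \colon \underline{X_1} \to \underline X$ a monomorphism in $\underline \h$. The main obstacle is to upgrade $\underline \alpha$ to an isomorphism using the abelian hypothesis: the cokernel $\underline C$ of $\underline \alpha$ exists in $\underline \h$ and, by Theorem \ref{ck}, is realized by an $\mathbb{F}$-triangle $X_1 \to X \to C \dashrightarrow$; comparing this $\mathbb{F}$-triangle with $T \to X_1 \to X \dashrightarrow$ via (ET4) and exploiting both the $\W$-epic property of $X \to C$ and the cotorsion vanishings $\EE(\s, \T) = 0 = \EE(\U, \V)$ should force $C \in \W$, so that $\underline C = 0$. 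The dual construction for $\underline{\h_2}$ starts from the $\B^+$-triangle $V \to W \to X$, applies $(\U, \V)$ to $W \in \U$ to get $W \to V^W \to U^W \dashrightarrow$ with $V^W \in \U \cap \V$ (using that $\U$ is closed under extensions by Lemma \ref{L1}(c) together with $W, U^W \in \U$), and then (ET4) yields $\EE$-triangles $V \to V^W \to X_2 \dashrightarrow$ and $X \to X_2 \to U^W \dashrightarrow$, placing $X_2 \in \h_2 \cap \h$ and giving by Lemma \ref{+-}(4) an epimorphism $\underline X \to \underline{X_2}$ which the symmetric abelianness argument upgrades to an isomorphism.

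For the moreover, the inclusion $\underline{\h_1} \cap \underline{\h_2} \subseteq \underline \h$ is automatic, since $\h_1 \cap \h_2 = \B^+_2 \cap \B^-_1 \subseteq \B^+ \cap \B^- = \h$ follows from $\B^+_2 \subseteq \B^+ \subseteq \B^+_1$ and $\B^-_1 \subseteq \B^- \subseteq \B^-_2$ (Lemma \ref{L41}). For the reverse under Krull-Schmidt, density produces $X_1 \in \h_1 \cap \h$ and $X_2 \in \h_2 \cap \h$ with $\underline X \cong \underline{X_1} \cong \underline{X_2}$ in $\underline \h$; in a Krull-Schmidt category, any isomorphism in $\underline \B$ between objects of $\h$ becomes an honest isomorphism in $\B$ after stripping indecomposable $\W$-summands, and using the stability of the defining conditions of $\B^\pm_i$ under direct summands, one extracts a common representative of $\underline X$ lying in $\h_1 \cap \h_2$. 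The hardest step throughout remains the one flagged above: invoking abelianness of $\underline \h$ to force the cokernel of the comparison morphism into $\W$, via a careful octahedral comparison between the original $\EE$-triangle and the $\mathbb{F}$-triangle supplied by Theorem \ref{ck}.
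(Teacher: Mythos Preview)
Your construction of $X_1$ matches the paper's exactly, but the step you flag as the ``main obstacle'' is a genuine gap, and your proposed workaround through Theorem~\ref{ck} and an octahedral comparison is both unnecessary and not obviously salvageable. The issue is that Theorem~\ref{ck} only produces an $\mathbb{F}$-triangle realizing the kernel--cokernel pair up to isomorphism in $\underline\h$; there is no reason the inflation in that $\mathbb{F}$-triangle is literally the morphism $\alpha\colon X_1\to X$, so you cannot directly run (ET4) against $T\to X_1\to X$. The paper bypasses this entirely: the very same $3\times 3$ diagram that produces $X_1$ also yields, via Lemma~\ref{L4}, an $\EE$-triangle
\[
X_1 \xrightarrow{\ \svecv{\alpha}{m}\ } X\oplus M \longrightarrow W \dashrightarrow
\]
with $M\in\W$ and $W\in\W\subseteq\U$. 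Lemma~\ref{+-}(4) then gives immediately that $\underline\alpha$ is an epimorphism in $\underline\h$; combined with the monomorphism statement you already have and the abelian hypothesis, $\underline\alpha$ is an isomorphism. No cokernel computation, no octahedron, and no appeal to Theorem~\ref{ck} is needed.

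For the Krull--Schmidt part, your sketch (``strip $\W$-summands and find a common representative'') is plausible but imprecise. The paper argues more sharply: take $X$ indecomposable with $X\notin\W$. From $\underline\alpha$ an isomorphism one finds $g\colon X\to X_1$ with $1-\alpha g$ factoring through $\W$; locality of $\End_\B(X)$ forces $\alpha g$ invertible in $\B$. One then compares the triangles $X\to W_1\to S_1$ and $X_1\to M_1\to S_1$ (using $\EE(S_1,W_1)=\EE(S_1,M_1)=0$) to see that $W_1$ is a direct summand of $M_1\oplus S_1$, hence $W_1\in\s\cap\T$. This shows $X$ itself lies in $\B^-_1$, so $X\in\h_1$; the dual gives $X\in\h_2$.
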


\begin{proof}
Let $X\in \h$. Then we have the following commutative diagram
$$\xymatrix{
T_1 \ar@{=}[r] \ar[d] &T_1 \ar[d]\\
X_1 \ar[r]^m \ar[d]_{f} &M_1 \ar[r] \ar[d] &S_1 \ar@{=}[d] \ar@{-->}[r] &\\
X \ar[r]_w \ar@{-->}[d] &W_1 \ar[r] \ar@{-->}[d] &S_1 \ar@{-->}[r] &\\
&&
}
$$
with $W_1\in \W$, $S_1\in \s$, $M_1\in \s\cap\T$ and $T_1\in \T$. By Lemma \ref{+-}, the first column of the diagram implies that $X_1\in \B^+$, the second row implies that $X_1\in \B^-_1$. Since $\B^+\subseteq \B^+_1$ by Lemma \ref{L41} and $\B^-_1\subseteq \B^-$, we have $X_1\in \h_1\cap \h$. From the first column we can also get that $\underline f$ is a monomorphism in $\underline \h$. The diagram induces an $\EE$-triangle $$X_1\xrightarrow{\svecv{f}{m}} X\oplus M_1 \longrightarrow W_1\dashrightarrow$$ which implies that $\underline f$ is an epimorphism in $\underline \h$ by Lemma \ref{+-}. Since $\underline \h$ is abelian, $\underline f$ is an isomorphism. Dually we can show that $X$ is isomorphic to an object in $\h_2\cap \h$ in the quotient category $\underline \h$. Hence $\underline {\h_1}$ and $\underline {\h_2}$ are dense in $\underline \h$.

Now assume that $\B$ is Krull-Schmidt and $X$ is indecomposable which does not lie in $\W$. Since $\B^-_1\subseteq \B^-$ and $\B^+_2\subseteq \B^+$, we have
$$\h_1\cap \h_2=(\B^-_1\cap\B^+_1)\cap(\B^-_2\cap\B^+_2)\subseteq\B^-\cap\B^+=\h.$$
Since $\underline f$ is an isomorphism, there exists a morphism $g:X\to X_1$ such that $\underline {fg}=\underline 1$. Then $1-fg$ factors through an object $W\in \W$. Assume $1-fg:X\xrightarrow{w_0} W\xrightarrow{w_0'} X$. Since $\B$ is Krull-Schmidt and $X$ is indecomposable, $\End_{\B}(X)$ is a local ring. $X$ does not lie in $\W$ implies that $w_0'w_0$ is not invertible. Hence $fg=1-w_0'w_0$ is invertible. Let $(fg)^{-1}=h$. Since $\EE(S_1,W_1)=0$ and $\EE(S_1,M_1)=0$, we have the following commutative diagram
$$\xymatrix{
X \ar[r]^{w} \ar[d]_{gh^{-1}}  &W_1 \ar[d]^{w_1} \ar[r] &S_1 \ar[d] \ar@{-->}[r] &\\
X_1 \ar[r]^m \ar[d]_{f} &M_1 \ar[r] \ar[d]^{m_1} &S_1 \ar[d] \ar@{-->}[r] &\\
X \ar[r]_{w}  &W_1 \ar[r] &S_1 \ar@{-->}[r] &
}
$$
We can get that $1-m_1w_1$ factors through $S_1$. Hence $W_1$ is a direct summand of $M_1\oplus S_1$, which implies that $W_1\in \s\cap \T$. Thus $X\in \B^-_1\cap\B^+\subseteq \h_1$. Dually we can show that $X\in \h_2$. Hence $X\in \h_1\cap \h_2$, we can obtain that $\underline \h=\underline {\h_1}\cap \underline {\h_2}$.
\end{proof}

The following example shows that not every heart of a cotorison pair is abelian.

\begin{exm}\label{ex1}
	{\upshape Let $A=kQ/I$ be a self-injective algebra given by the following quiver $$Q:
		\setlength{\unitlength}{0.03in}\xymatrix{1 \ar@<0.5ex>[r]^{\alpha}_{\ } & 2\ar@<0.5ex>[l]^{\beta}}$$ with $I=\langle\alpha\beta\alpha\beta, \beta\alpha\beta\alpha\rangle$.
		The Auslander-Reiten quiver of $\mod A$ is
		\begin{center}
			$
			\xymatrix@!@C=0.1mm@R=0.1mm{
				&\txt{1\\2\\1\\2}\ar[dr]&&\txt{2\\1\\2\\1}\ar[dr]&\\
				\txt{2\\1\\2}\ar[ur]\ar[dr]\ar@{--}[u]&&\txt{1\\2\\1}\ar@{.}[ll]\ar[ur]\ar[dr]&&\txt{2\\1\\2}\ar@{.}[ll]\ar@{--}[u]\\
				&\txt{2\\1}\ar[dr]\ar[ur]\ar@{.}[l]&&\txt{1\\2}\ar[dr]\ar[ur]\ar@{.}[ll]&\ar@{.}[l]\\
				1\ar[ur]\ar@{--}[uu]&&{2}\ar@{.}[ll]\ar[ur]&&1\ar@{.}[ll]\ar@{--}[uu] }
			$
			\medskip
			
			Figure 1
\end{center}
where the first and the last columns are identified. The stable module category $\B:={\rm \underline{\mod}}A$ is a
 triangulated category and we can get the Auslander-Reiten quiver of $\B$ by deleting the first row in Figure 1.}
\end{exm}

Let  {\setlength{\jot}{-3pt}
$$(\s, \T):=(\add\bigg\{\begin{aligned}2\\1\\2\end{aligned}~\bigg\}, \add\bigg\{\begin{aligned}2\\1\\2\end{aligned}\oplus 2\oplus \begin{aligned}1\\2\end{aligned}~\bigg\}),\quad (\U, \V):=(\T, \add\{2\}).$$
	
By definition, they form a twin cotorsion pair, and
	$$\underline {\h_1}=\underline {\h_2}=\add\bigg\{1\bigg\};\quad \underline \h=\add\bigg\{1\oplus \begin{aligned}2\\1\end{aligned}\oplus\begin{aligned}1\\2\\1\end{aligned}\bigg\}$$
By Proposition \ref{prop1}, $\underline \h$ is not abelian.
}

\medskip


By using the results in Section 3, we provide a sufficient and necessary condition for the hearts of twin cotorsion pairs being abelian.

\begin{thm}\label{main2}
$\underline \h$ is an abelian category if and only if the following condition is satisfied:
\begin{itemize}
\item Any epimorphism $\underline p:B\to C$ in $\underline \h$ admits an $\EE$-triangle
$$\xymatrix{B' \ar[r]^-{p'} &C' \ar[r] &S \ar@{-->}[r] &}$$
such that $B',C'\in \h$, $S\in \s$ and $\underline {p'}=\underline p$.
\end{itemize}
\end{thm}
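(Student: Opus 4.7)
The plan is to derive both directions from Theorem~\ref{ck} (kernel--cokernel pairs in $\underline{\h}$ coincide with images of $\mathbb{F}$-triangles) together with Remark~\ref{cond} (abelianness is equivalent to every epimorphism being a cokernel), so that the problem reduces to showing that the stated condition on $\EE$-triangles characterizes exactly when every epimorphism in $\underline{\h}$ is a cokernel.

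For the necessity direction I will take an arbitrary epimorphism $\alpha\colon B\to C$, which by abelianness is a cokernel, and by Theorem~\ref{ck} therefore arises as $\underline{p}$ for some $\mathbb{F}$-triangle $A\xrightarrow{i}B\xrightarrow{p}C\dashrightarrow$. Using $A\in\h\subseteq\B^-$, I will choose an $\EE$-triangle $A\xrightarrow{a}W^A\to S^A\dashrightarrow$ with $W^A\in\W$ and $S^A\in\s$, and push out along $a$ via Lemma~\ref{L4} to obtain an object $B_1$ fitting into a commutative $3\times 3$ diagram with rows $A\to B\to C$ and $W^A\to B_1\to C$ and columns $A\to W^A\to S^A$ and $B\xrightarrow{j}B_1\to S^A$. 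Lemma~\ref{+-}(1) and (3) will give $B_1\in\h$; Lemma~\ref{+-}(2) applied to $W^A\to B_1\to C$ shows that $\underline{(B_1\to C)}$ is monic; and this map is also epic because $\underline{j}$ is epic by Lemma~\ref{+-}(4) while $\alpha=\underline{(B_1\to C)}\circ\underline{j}$ is epic. Abelianness then promotes this bimorphism to an isomorphism, so the $\EE$-triangle $B\xrightarrow{j}B_1\to S^A\dashrightarrow$ realizes $\alpha$ in the required form.

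For the sufficiency direction, given the hypothesized $\EE$-triangle $B'\xrightarrow{p'}C'\to S\dashrightarrow$ with $S\in\s$ and $\underline{p'}=\alpha$, I will form the kernel $\underline{k}\colon K\to B'$ of $\alpha$ in the preabelian $\underline{\h}$, realize it via an $\mathbb{F}$-triangle $K\xrightarrow{k}B'\xrightarrow{c}D\dashrightarrow$ by combining Lemma~\ref{kc}(1) with Theorem~\ref{ck}, and factor $\alpha=\underline{h}\,\underline{c}$, where the induced $\underline{h}\colon D\to C'$ is automatically both monic and epic. The task is to prove $\underline{h}$ is an isomorphism. I will apply (ET4) to the pair $K\to B'\to D$ and $B'\to C'\to S$ (which share $B'$ in the middle-to-first position), obtaining $\EE$-triangles $K\to C'\to E\dashrightarrow$ and $D\to E\to S\dashrightarrow$ together with the associated commutative octahedron. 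Exploiting that $\underline{p'k}=\alpha\,\underline{k}=0$ and the cotorsion-pair vanishings $\EE(\s,\W)=\EE(\s,\V)=0$, this octahedral data should collapse, upon passage to $\underline{\h}$, into a genuine $\mathbb{F}$-triangle witnessing $\underline{h}$ as a cokernel; combined with $\underline{h}$ being monic, the dual of Lemma~\ref{kc}(2) will then force $\underline{h}$ to be an isomorphism.

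The main obstacle is the sufficiency direction: the hypothesized $\EE$-triangle positions $p'$ as the first map, whereas Theorem~\ref{ck} requires an $\mathbb{F}$-triangle in which a $\underline{\h}$-equivalent map appears as the middle map; moreover the octahedron object $E$ is a priori known only to belong to $\B^-$, so isolating a replacement lying in $\h$ will require further adjustment using the $\W$-resolutions of $K$, $D$ and $C'$. Verifying the $\W$-monic and $\W$-epic conditions of Definition~\ref{DF1} for the resulting maps is the technical heart of the argument, and it relies crucially on the vanishing of $\EE(\s,-)$ on the various objects, guaranteed by the cotorsion-pair axioms.
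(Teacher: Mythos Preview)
Your necessity direction is essentially the paper's argument, arranged slightly differently. The paper also starts from the $\mathbb{F}$-triangle $A'\xrightarrow{i'}B'\xrightarrow{p_1}C_1\dashrightarrow$ furnished by Theorem~\ref{ck}, then uses the $\W$-resolution $A'\to W\to S\dashrightarrow$ together with the dual of Lemma~\ref{L4} to obtain directly the $\EE$-triangle $B'\xrightarrow{\svecv{p_1}{b'}}C_1\oplus W\to S\dashrightarrow$. This is the same pushout you form; the paper simply reads off the anti-diagonal $\EE$-triangle, which makes the equality $\underline{\svecv{p_1}{b'}}=\underline{p_1}=\underline p$ immediate without invoking abelianness a second time to promote a bimorphism to an isomorphism.

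Your sufficiency direction, however, has a genuine gap, and the detour through the kernel of $\alpha$ and (ET4) is not needed. As you yourself flag, the octahedral object $E$ lies only in $\B^-$, and you give no mechanism for bringing the triangles $K\to C'\to E\dashrightarrow$ and $D\to E\to S\dashrightarrow$ back into $\h$ or for linking the map $d\colon D\to E$ in the octahedron to your comparison map $h\colon D\to C'$; the phrase ``should collapse'' is precisely where the proof is missing. The paper bypasses all of this: starting from the hypothesised $\EE$-triangle $B'\xrightarrow{p'}C'\to S\dashrightarrow$, it uses only that $C'\in\B^+$ admits $V'\to W'\to C'\dashrightarrow$ with $V'\in\V$, $W'\in\W$. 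Pulling this back along $C'\to S$ produces the $3\times3$ diagram
\[
\xymatrix{
V' \ar@{=}[r] \ar[d] & V' \ar[d] & \\
A' \ar[r]^{w'} \ar[d]_{i'} & W' \ar[r] \ar[d] & S \ar@{=}[d]\\
B' \ar[r]_{p'} & C' \ar[r] & S
}
\]
with $A'\in\h$ (by Lemma~\ref{+-}). Exactly as in the proof of Theorem~\ref{ck}, the induced $\EE$-triangle $A'\xrightarrow{\svecv{w'}{-i'}}W'\oplus B'\xrightarrow{\svech{w''}{p'}}C'\dashrightarrow$ is an $\mathbb{F}$-triangle, so $A'\xrightarrow{-\underline{i'}}B'\xrightarrow{\underline{p'}}C'$ is a short exact sequence in $(\underline\h,\underline{\mathfrak F})$ and $\underline p=\underline{p'}$ is a cokernel. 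No kernel of $\alpha$, no (ET4), no auxiliary object outside $\h$ is required; you should replace your sufficiency plan with this direct construction.
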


\begin{proof}

By Remark \ref{cond}, we only need to show the following fact:
\begin{itemize}
\item A morphism $\underline p:B\to C$ in $\underline \h$ is a cokernel if and only if we have an $\EE$-triangle
$$\xymatrix{B' \ar[r]^-{p'} &C'\ar[r] &S \ar@{-->}[r] &}$$
such that $B',C'\in \h$, $S\in \s$ and $\underline {p'}=\underline p$.
\end{itemize}
We show the ``only if" part first.

By Lemma \ref{kc} (1) and Theorem \ref{ck}, $\underline p$ admits an $\mathbb{F}$-triangle
$$\xymatrix{A' \ar[r]^{i'} &B' \ar[r]^{p_1} &C_1 \ar@{-->}[r] &}$$
with $\underline {p_1}=\underline p$. Since $i'$ is $\W$-monic and $A'$ admits an $\EE$-triangle $A'\to W\to S\dashrightarrow$ where $W\in \W$, $S\in \s$, we have the following commutative diagram
$$\xymatrix{
A' \ar[r]^{i'} \ar@{=}[d] &B' \ar[r]^{p_1} \ar[d]^{b'} &C_1 \ar[d] \ar@{-->}[r] &\\
A' \ar[r] &W \ar[r] &S \ar@{-->}[r] &.
}
$$
By the dual of Lemma \ref{L4}, we can choose $b'$ to make an $\EE$-triangle
$$\xymatrix{B' \ar[r]^-{\svecv{p_1}{b'}} &C_1\oplus W \ar[r] &S \ar@{-->}[r] &.}$$
This $\EE$-triangle is just what we need.

Now we show the ``if" part. Since $C'$ admits an $\EE$-triangle $V'\to W'\to C'\dashrightarrow$ with $V'\in \V$ and $W'\in \W$, we have the following commutative diagram
$$\xymatrix{
V' \ar@{=}[r] \ar[d] &V' \ar[d]\\
A' \ar[r]^{w'} \ar[d]_{i'} &W' \ar[r] \ar[d] &S \ar@{=}[d] \ar@{-->}[r] &\\
B' \ar[r]_-{p'} \ar@{-->}[d]  &C' \ar[r] \ar@{-->}[d] &S\ar@{-->}[r] &\\
&&
}
$$
with $A'\in \h$. By the proof of Theorem \ref{ck}, we can get an $\mathbb{F}$-triangle
$$\xymatrix{A'\ar[r]^-{\svecv{-i'}{w'}} &B'\oplus W' \ar[r]^-{\svech{p'}{w''}} &C' \ar@{-->}[r] &}$$
which induces a short exact sequence $A'\xrightarrow{-\underline {i'}} B' \xrightarrow {\underline {p'}} C'$. Hence $\underline p=\underline {p'}$ is a cokernel.
\end{proof}

By duality, we obtain the following corollary immediately.

\begin{cor}
$\underline \h$ is an abelian category if and only if the following condition is satisfied:
\begin{itemize}
\item Any monomorphism $\underline q:A\to B$ in $\underline \h$ admits an $\EE$-triangle
$$\xymatrix{V \ar[r] &A'\ar[r]^-{q'} &B' \ar@{-->}[r] &}$$
such that $A',B'\in \h$, $V\in \V$ and $\underline {q'}=\underline q$.
\end{itemize}
\end{cor}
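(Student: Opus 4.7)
The plan is to deduce this corollary from Theorem \ref{main2} by passing to the opposite extriangulated category. First I would invoke the fact that $(\B^{\op},\EE^{\op},\mathfrak{s}^{\op})$ is again an extriangulated category, and observe that the twin cotorsion pair $((\s,\T),(\U,\V))$ on $\B$ gives rise on $\B^{\op}$ to the pair $((\V,\U),(\T,\s))$. Indeed, the cotorsion pair axioms are manifestly symmetric in the two slots, and the condition $\s\subseteq\U$ in $\B$ becomes $\V\subseteq\T$ in $\B^{\op}$, which is equivalent to the twin cotorsion pair condition there. The subcategory $\W=\T\cap\U$ of $\B$ coincides with the corresponding intersection in $\B^{\op}$, so $\uB$ as an additive category is preserved by this duality, with all arrows reversed.

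Next I would verify that the heart is self-dual under this transformation. The defining $\EE$-triangles $V\to W\to X$ (placing $X$ in $\B^+$) become $\EE^{\op}$-triangles $X\to W\to V$, which place $X$ in the $\B^-$ of the new pair $((\V,\U),(\T,\s))$; similarly $\B^-$ on $\B$ matches $\B^+$ on $\B^{\op}$. Hence $\h=\B^+\cap\B^-$ is the same subcategory, and the quotient $\underline{\h}$ computed from the two pairs yields the same additive category with morphisms reversed, i.e.\ $(\underline{\h})^{\op}$.

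Now I would apply Theorem \ref{main2} to the twin cotorsion pair $((\V,\U),(\T,\s))$ on $\B^{\op}$: the abelianness of $(\underline{\h})^{\op}$ is equivalent to the property that every epimorphism $\underline{p}:B\to C$ in $(\underline{\h})^{\op}$ admits an $\EE^{\op}$-triangle $B'\to C'\to S$ with $S$ lying in the ``$\s$-slot'' of the dual pair, which is $\V$, and whose image in $(\underline{\h})^{\op}$ equals $\underline{p}$. Reinterpreting this back in $\B$: an epimorphism in $(\underline{\h})^{\op}$ is precisely a monomorphism $\underline{q}:A\to B$ in $\underline{\h}$; an $\EE^{\op}$-triangle of the displayed form corresponds to an $\EE$-triangle $V\to A'\to B'\dashrightarrow$ in $\B$ with $V\in\V$; and the condition $\underline{p'}=\underline{p}$ translates to $\underline{q'}=\underline{q}$. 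Since $\underline{\h}$ is abelian if and only if $(\underline{\h})^{\op}$ is, the corollary follows.

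The only delicate point in this plan is checking that the dualization is consistent across the two ingredients used in Theorem \ref{main2}, namely the definition of the heart (which must match under swapping $\B^+\leftrightarrow\B^-$) and the quotient $\uB=\B/\W$ (for which $\W$ must be invariant, which it is since $\W=\T\cap\U$ is symmetric in $\T$ and $\U$). Once these verifications are in place, no separate argument is needed: the corollary is a formal consequence of the self-duality of the twin cotorsion pair structure combined with Theorem \ref{main2}.
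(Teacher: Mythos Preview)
Your proposal is correct and is precisely the duality argument the paper invokes: the paper's own proof of this corollary is the single sentence ``By duality, we obtain the following corollary immediately,'' and what you have written is an explicit unpacking of that duality via the opposite extriangulated category and the induced twin cotorsion pair $((\V,\U),(\T,\s))$ on $\B^{\op}$.
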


The following example shows that the condition in Proposition \ref{prop1} is only necessary, not sufficient.

\begin{exm}
{\upshape 	 Let $A=kQ/I$ be an algebra given by the quiver
	 \begin{align}
	 	\begin{minipage}{0.6\hsize}
	 		\ \ \ \ \  \xymatrix{Q: \begin{smallmatrix}6\end{smallmatrix}\ar[r]^{\alpha}&\begin{smallmatrix}5\end{smallmatrix}\ar[r]^{\beta}
	 			&\begin{smallmatrix}4\end{smallmatrix}\ar[r]^{\gamma}&\begin{smallmatrix}3\end{smallmatrix}\ar[r]^{\delta}&\begin{smallmatrix}2\end{smallmatrix}\ar[r]^{\varepsilon}&\begin{smallmatrix}1\end{smallmatrix}}\notag
	 	\end{minipage}
	 \end{align}
and $I=\langle\alpha\beta\gamma\delta, \beta\gamma\delta\varepsilon\rangle.$	The Auslander-Reiten quiver of $\mod A$ is the following:}
\begin{align}
	\tiny{\xymatrix @R=4mm @C=6mm{&&&
			{\begin{smallmatrix}4\\3\\2\\1\end{smallmatrix}}\ar[rd]&&
			{\begin{smallmatrix}5\\4\\3\\2\end{smallmatrix}}\ar[rd]&&
			{\begin{smallmatrix}6\\5\\4\\3\end{smallmatrix}}\ar[rd]&&&\\
			&&{\begin{smallmatrix}3\\2\\1\end{smallmatrix}}\ar[ru]\ar[rd]&&
			{\begin{smallmatrix}4\\3\\2\end{smallmatrix}}\ar[ru]\ar[rd]\ar@{.}[ll]&&
			{\begin{smallmatrix}5\\4\\3\end{smallmatrix}}\ar[ru]\ar[rd]\ar@{.}[ll]&&
			{\begin{smallmatrix}6\\5\\4\end{smallmatrix}}\ar[rd]\ar@{.}[ll]&&\\
			&{\begin{smallmatrix}2\\1\end{smallmatrix}}\ar[ru]\ar[rd]&&
			{\begin{smallmatrix}3\\2\end{smallmatrix}}\ar[ru]\ar[rd]\ar@{.}[ll]&&
			{\begin{smallmatrix}4\\3\end{smallmatrix}}\ar[ru]\ar[rd]\ar@{.}[ll]&&
			{\begin{smallmatrix}5\\4\end{smallmatrix}}\ar[ru]\ar[rd]\ar@{.}[ll]&&
			{\begin{smallmatrix}6\\5\end{smallmatrix}}\ar[rd]\ar@{.}[ll]\\
			{\begin{smallmatrix}1\end{smallmatrix}}\ar[ru]&&\textnormal{$\begin{smallmatrix}2\end{smallmatrix}$}\ar[ru]\ar@{.}[ll]&&
			{\begin{smallmatrix}3\end{smallmatrix}}\ar[ru]\ar@{.}[ll]&&\textnormal{$\begin{smallmatrix}4\end{smallmatrix}$}\ar[ru]\ar@{.}[ll]&&
			{\begin{smallmatrix}5\end{smallmatrix}}\ar[ru]\ar@{.}[ll]&&
			{\begin{smallmatrix}6\end{smallmatrix}}\ar@{.}[ll]
	}}\notag
\end{align}
\end{exm}

In this example, we denote by ``$\bullet$" in the quiver the objects belong to a subcategory and
by  ``$\circ $" the objects do not. Given four subcategories as follows:
\begin{align}
	\tiny{\xymatrix @R=4mm @C2mm{&&&
			{\begin{smallmatrix}\bullet\end{smallmatrix}}\ar[rd]&&
			{\begin{smallmatrix}\bullet\end{smallmatrix}}\ar[rd]&&
			{\begin{smallmatrix}\bullet\end{smallmatrix}}\ar[rd]&&&\\
			\s:=&&{\begin{smallmatrix}\bullet\end{smallmatrix}}\ar[ru]\ar[rd]&&
			{\begin{smallmatrix}\circ\end{smallmatrix}}\ar[ru]\ar[rd]\ar@{.}[ll]&&
			{\begin{smallmatrix}\circ\end{smallmatrix}}\ar[ru]\ar[rd]\ar@{.}[ll]&&
			{\begin{smallmatrix}\circ\end{smallmatrix}}\ar[rd]\ar@{.}[ll]&&\\
			&{\begin{smallmatrix}\bullet\end{smallmatrix}}\ar[ru]\ar[rd]&&
			{\begin{smallmatrix}\circ\end{smallmatrix}}\ar[ru]\ar[rd]\ar@{.}[ll]&&
			{\begin{smallmatrix}\circ\end{smallmatrix}}\ar[ru]\ar[rd]\ar@{.}[ll]&&
			{\begin{smallmatrix}\circ\end{smallmatrix}}\ar[ru]\ar[rd]\ar@{.}[ll]&&
			{\begin{smallmatrix}\bullet\end{smallmatrix}}\ar[rd]\ar@{.}[ll]\\
			{\begin{smallmatrix}\bullet\end{smallmatrix}}\ar[ru]&&\textnormal{$\begin{smallmatrix}\bullet\end{smallmatrix}$}\ar[ru]\ar@{.}[ll]&&
			{\begin{smallmatrix}\circ\end{smallmatrix}}\ar[ru]\ar@{.}[ll]&&\textnormal{$\begin{smallmatrix}\circ\end{smallmatrix}$}\ar[ru]\ar@{.}[ll]&&
			{\begin{smallmatrix}\circ\end{smallmatrix}}\ar[ru]\ar@{.}[ll]&&
			{\begin{smallmatrix}\bullet\end{smallmatrix}}\ar@{.}[ll]\\
	}}\notag
\ \ \quad\quad \quad\quad	\tiny{\xymatrix @R=4mm @C2mm{&&&
			{\begin{smallmatrix}\bullet\end{smallmatrix}}\ar[rd]&&
			{\begin{smallmatrix}\bullet\end{smallmatrix}}\ar[rd]&&
			{\begin{smallmatrix}\bullet\end{smallmatrix}}\ar[rd]&&&\\
			\T:=&&{\begin{smallmatrix}\bullet\end{smallmatrix}}\ar[ru]\ar[rd]&&
			{\begin{smallmatrix}\bullet\end{smallmatrix}}\ar[ru]\ar[rd]\ar@{.}[ll]&&
			{\begin{smallmatrix}\circ\end{smallmatrix}}\ar[ru]\ar[rd]\ar@{.}[ll]&&
			{\begin{smallmatrix}\bullet\end{smallmatrix}}\ar[rd]\ar@{.}[ll]&&\\
			&{\begin{smallmatrix}\bullet\end{smallmatrix}}\ar[ru]\ar[rd]&&
			{\begin{smallmatrix}\bullet\end{smallmatrix}}\ar[ru]\ar[rd]\ar@{.}[ll]&&
			{\begin{smallmatrix}\circ\end{smallmatrix}}\ar[ru]\ar[rd]\ar@{.}[ll]&&
			{\begin{smallmatrix}\circ\end{smallmatrix}}\ar[ru]\ar[rd]\ar@{.}[ll]&&
			{\begin{smallmatrix}\bullet\end{smallmatrix}}\ar[rd]\ar@{.}[ll]\\
			{\begin{smallmatrix}\circ\end{smallmatrix}}\ar[ru]&&\textnormal{$\begin{smallmatrix}\bullet\end{smallmatrix}$}\ar[ru]\ar@{.}[ll]&&
			{\begin{smallmatrix}\bullet\end{smallmatrix}}\ar[ru]\ar@{.}[ll]&&\textnormal{$\begin{smallmatrix}\circ\end{smallmatrix}$}\ar[ru]\ar@{.}[ll]&&
			{\begin{smallmatrix}\circ\end{smallmatrix}}\ar[ru]\ar@{.}[ll]&&
			{\begin{smallmatrix}\bullet\end{smallmatrix}}\ar@{.}[ll]\\
	}}\notag
\end{align}
\begin{align}
	\tiny{\xymatrix @R=4mm @C2mm{&&&
			{\begin{smallmatrix}\bullet\end{smallmatrix}}\ar[rd]&&
			{\begin{smallmatrix}\bullet\end{smallmatrix}}\ar[rd]&&
			{\begin{smallmatrix}\bullet\end{smallmatrix}}\ar[rd]&&&\\
			\U:=&&{\begin{smallmatrix}\bullet\end{smallmatrix}}\ar[ru]\ar[rd]&&
			{\begin{smallmatrix}\circ\end{smallmatrix}}\ar[ru]\ar[rd]\ar@{.}[ll]&&
			{\begin{smallmatrix}\circ\end{smallmatrix}}\ar[ru]\ar[rd]\ar@{.}[ll]&&
			{\begin{smallmatrix}\circ\end{smallmatrix}}\ar[rd]\ar@{.}[ll]&&\\
			&{\begin{smallmatrix}\bullet\end{smallmatrix}}\ar[ru]\ar[rd]&&
			{\begin{smallmatrix}\circ\end{smallmatrix}}\ar[ru]\ar[rd]\ar@{.}[ll]&&
			{\begin{smallmatrix}\circ\end{smallmatrix}}\ar[ru]\ar[rd]\ar@{.}[ll]&&
			{\begin{smallmatrix}\circ\end{smallmatrix}}\ar[ru]\ar[rd]\ar@{.}[ll]&&
			{\begin{smallmatrix}\bullet\end{smallmatrix}}\ar[rd]\ar@{.}[ll]\\
			{\begin{smallmatrix}\bullet\end{smallmatrix}}\ar[ru]&&\textnormal{$\begin{smallmatrix}\bullet\end{smallmatrix}$}\ar[ru]\ar@{.}[ll]&&
			{\begin{smallmatrix}\circ\end{smallmatrix}}\ar[ru]\ar@{.}[ll]&&\textnormal{$\begin{smallmatrix}\circ\end{smallmatrix}$}\ar[ru]\ar@{.}[ll]&&
			{\begin{smallmatrix}\bullet\end{smallmatrix}}\ar[ru]\ar@{.}[ll]&&
			{\begin{smallmatrix}\bullet\end{smallmatrix}}\ar@{.}[ll]\\
	}}\notag
	\ \ \quad\quad \quad\quad	\tiny{\xymatrix @R=4mm @C2mm{&&&
		{\begin{smallmatrix}\bullet\end{smallmatrix}}\ar[rd]&&
			{\begin{smallmatrix}\bullet\end{smallmatrix}}\ar[rd]&&
			{\begin{smallmatrix}\bullet\end{smallmatrix}}\ar[rd]&&&\\
			\V:=&&{\begin{smallmatrix}\bullet\end{smallmatrix}}\ar[ru]\ar[rd]&&
			{\begin{smallmatrix}\circ\end{smallmatrix}}\ar[ru]\ar[rd]\ar@{.}[ll]&&
			{\begin{smallmatrix}\circ\end{smallmatrix}}\ar[ru]\ar[rd]\ar@{.}[ll]&&
			{\begin{smallmatrix}\bullet\end{smallmatrix}}\ar[rd]\ar@{.}[ll]&&\\
			&{\begin{smallmatrix}\bullet\end{smallmatrix}}\ar[ru]\ar[rd]&&
			{\begin{smallmatrix}\bullet\end{smallmatrix}}\ar[ru]\ar[rd]\ar@{.}[ll]&&
			{\begin{smallmatrix}\circ\end{smallmatrix}}\ar[ru]\ar[rd]\ar@{.}[ll]&&
			{\begin{smallmatrix}\circ\end{smallmatrix}}\ar[ru]\ar[rd]\ar@{.}[ll]&&
			{\begin{smallmatrix}\bullet\end{smallmatrix}}\ar[rd]\ar@{.}[ll]\\
			{\begin{smallmatrix}\circ\end{smallmatrix}}\ar[ru]&&\textnormal{$\begin{smallmatrix}\bullet\end{smallmatrix}$}\ar[ru]\ar@{.}[ll]&&
			{\begin{smallmatrix}\bullet\end{smallmatrix}}\ar[ru]\ar@{.}[ll]&&\textnormal{$\begin{smallmatrix}\circ\end{smallmatrix}$}\ar[ru]\ar@{.}[ll]&&
			{\begin{smallmatrix}\circ\end{smallmatrix}}\ar[ru]\ar@{.}[ll]&&
			{\begin{smallmatrix}\bullet\end{smallmatrix}}\ar@{.}[ll]\\
	}}\notag
\end{align}
then $(\s, \T), (\U, \V)$ form a twin cotorsion pair and
$$\underline {\h_1}=\add\bigg\{\begin{smallmatrix}5\end{smallmatrix}\oplus \begin{smallmatrix}4\\3\end{smallmatrix}\oplus\begin{smallmatrix}5\\4\\3\end{smallmatrix}\bigg\},\quad \underline {\h_2}=\add\bigg\{\begin{smallmatrix}4\\3\\2\end{smallmatrix}\oplus \begin{smallmatrix}4\\3\end{smallmatrix}\oplus\begin{smallmatrix}5\\4\\3\end{smallmatrix}\bigg\},\quad \underline \h=\add\bigg\{\begin{smallmatrix}4\\3\end{smallmatrix}\oplus\begin{smallmatrix}5\\4\\3\end{smallmatrix}\bigg\}=\underline {\h_1}\cap\underline {\h_2}.$$
$\underline \h$ is not abelian, since $\begin{smallmatrix}4\\3\end{smallmatrix}\longrightarrow\begin{smallmatrix}5\\4\\3\end{smallmatrix}$ is an epimorphism in $\underline \h$, but we can only find a short exact sequence $\begin{smallmatrix}4\\3\end{smallmatrix}\longrightarrow\begin{smallmatrix}5\\4\\3\end{smallmatrix}\longrightarrow \begin{smallmatrix}5\end{smallmatrix}$ where $\begin{smallmatrix}5\end{smallmatrix}$ lies in $\U$, not in $\s$.

\section{Almost split sequences}

In this section, we assume $\B$ is Krull-Schmidt.

\begin{defn}\label{as}
Let $(\mathcal A, \EE_{\mathcal A}, \mathfrak{s}_{\mathcal A})$ be an arbitrary Krull-Schmidt extriangulated category. An $\EE_{\mathcal A}$-triangle $A\xrightarrow{x} B\xrightarrow{y} C\dashrightarrow$ is called an almost split sequence if the following conditions are satisfied:
\begin{itemize}
\item[(a)] This $\EE_{\mathcal A}$-triangle is nonsplit.
\item[(b)] If $a: A\to M$ is not a section, then there is a morphism $m:B\to M$ such that $a=mx$ (we call such $x$ left almost split).
\item[(c)] If $c:N\to C$ is not a retraction, then there is a morphism $n:N\to B$ such that $c=yn$ (we call such $y$ right almost split).
\end{itemize}
\end{defn}

\begin{rem}
Note that $A\xrightarrow{x} B\xrightarrow{y} C\overset{\delta}\dashrightarrow$ is an almost split sequence if and only if $\delta$ is an almost split extension defined in \cite[Definition 2.1]{INP}. Our definition of almost split sequence also coincides with Auslander-Reiten $\EE$-triangle defined in \cite[Definition 4.1]{ZZ1}. Particularly, the almost split sequences in $(\underline \h,\underline {\mathfrak{F}})$ coincides with the Auslander-Reiten sequences defined in \cite[Definition 4.6]{Sh}.
\end{rem}

\begin{rem}
Let $k$ be a field. If $\A=\mod \Lambda$ where $\Lambda$ is a finite dimensional $k$-algebra, then the almost split sequences defined in Definition \ref{as} are just the Auslander-Reiten sequences in $\mod \Lambda$. If $\A$ is a Krull-Schmidt, Hom-finite, $k$-linear triangulated category with a Serre functor, then the almost split sequences defined in Definition \ref{as} are just the Auslander-Reiten triangles in $\A$.
\end{rem}

According to \cite[Proposition 2.5]{INP} and \cite[Remark 4.2]{ZZ1}, we have the following lemma.

\begin{lem}
Let $(\mathcal A, \EE_{\mathcal A}, \mathfrak{s}_{\mathcal A})$ be a Krull-Schmidt extriangulated category. Let $A\xrightarrow{x} B\xrightarrow{y} C\dashrightarrow$ be an almost split sequence. Then $A,C$ are indecomposable objects.
\end{lem}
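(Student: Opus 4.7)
The plan is to prove each of $A$ and $C$ indecomposable by a symmetric argument, using the minimality encoded in the right/left almost split properties (conditions (c) and (b) of Definition \ref{as}) together with the Krull--Schmidt structure. I will sketch the argument for $C$ and indicate the dual for $A$.

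First I would suppose for contradiction that $C = C_1 \oplus C_2$ with both summands nonzero, and denote by $\iota_i \colon C_i \to C$ and $\pi_i \colon C \to C_i$ the canonical inclusions and projections, so that $1_C = \iota_1\pi_1+\iota_2\pi_2$. The key preliminary observation is that in a Krull--Schmidt category neither $\iota_1$ nor $\iota_2$ can be a retraction: if $\iota_i r = 1_C$ for some $r \colon C \to C_i$, then applying $\pi_j$ with $j\neq i$ gives $\pi_j = \pi_j\iota_i r = 0$, forcing $C_j = 0$, a contradiction.

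Now I would apply the right almost split condition (c) from Definition \ref{as} to each $\iota_i$ in turn, obtaining morphisms $n_i \colon C_i \to B$ with $\iota_i = y n_i$. Combining them gives
$$y \circ (n_1 \pi_1 + n_2 \pi_2) = \iota_1 \pi_1 + \iota_2 \pi_2 = 1_C,$$
so $y$ is a split epimorphism. It is standard in the extriangulated setting that a split epimorphism forces the corresponding $\EE_{\mathcal A}$-extension to vanish, so the given $\EE_{\mathcal A}$-triangle must split, contradicting condition (a). Hence $C$ is indecomposable.

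The argument for $A$ is dual. Assuming $A = A_1 \oplus A_2$ with both summands nonzero and structural morphisms $s_i, p_i$, the same Krull--Schmidt calculation shows that neither $p_i$ is a section; then the left almost split condition (b) produces morphisms $m_i \colon B \to A_i$ with $p_i = m_i x$, whence $(s_1 m_1 + s_2 m_2) x = s_1 p_1 + s_2 p_2 = 1_A$, making $x$ a split monomorphism and again forcing the $\EE_{\mathcal A}$-triangle to split. The only mild subtlety is the Krull--Schmidt lemma that an inclusion (resp.\ projection) of a proper nonzero summand is never a retraction (resp.\ section); once that is noted, the rest is a direct diagram chase, which is why the result is merely a reformulation of \cite[Proposition 2.5]{INP} and \cite[Remark 4.2]{ZZ1}.
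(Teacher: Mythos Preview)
Your proof is correct. The paper does not give its own argument for this lemma; it simply cites \cite[Proposition 2.5]{INP} and \cite[Remark 4.2]{ZZ1}, so there is nothing to compare at the level of strategy. Your direct argument---lifting the biproduct idempotents of a hypothetical decomposition through the right (resp.\ left) almost split morphism to produce a section of $y$ (resp.\ retraction of $x$), and then invoking Proposition~\ref{exs} to conclude $\delta=0$---is the standard one and is essentially what those references contain.

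One small remark on presentation: the observation that the canonical inclusion $\iota_i\colon C_i\to C$ of a proper nonzero summand is never a retraction does not use the Krull--Schmidt hypothesis; it is pure biproduct arithmetic in any additive category (your own computation $\pi_j=\pi_j\iota_i r=0$ shows this). The Krull--Schmidt assumption is part of the ambient setup rather than an ingredient of this particular step, so you could drop the phrase ``Krull--Schmidt lemma'' there without loss.
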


We recall the following basic property for almost split sequences.

\begin{lem}{\rm \cite[Proposition 2.9]{INP}}\label{min}
Let $(\mathcal A, \EE_{\mathcal A}, \mathfrak{s}_{\mathcal A})$ be a Krull-Schmidt extriangulated category. Let $A\xrightarrow{x} B\xrightarrow{y} C\dashrightarrow$ be an almost split sequence. Then $x$ is left minimal and $y$ is right minimal.
\end{lem}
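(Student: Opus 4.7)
The plan is to prove left minimality of $x$; right minimality of $y$ will follow by the dual argument (equivalently, by passing to the opposite extriangulated category). So I fix $b \in \End_{\mathcal A}(B)$ satisfying $bx = x$ and aim to show that $b$ is an automorphism.

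My first step is to rewrite $b$ using the long exact sequence of Proposition \ref{exs}(1) applied to $A \xrightarrow{x} B \xrightarrow{y} C \dashrightarrow$. Since $(1_B - b)x = 0$, exactness at $\Hom_{\mathcal A}(B, B)$ produces a morphism $g \colon C \to B$ with $1_B - b = gy$, whence $b = 1_B - gy$. The heart of the argument is then an exchange identity: $1_B - gy$ is invertible in $\End_{\mathcal A}(B)$ if and only if $1_C - yg$ is invertible in $\End_{\mathcal A}(C)$. Concretely, if $\alpha = (1_C - yg)^{-1}$, then a direct calculation using the relations $(1_C - yg)\alpha = \alpha(1_C - yg) = 1_C$ shows that $1_B + g\alpha y$ is a two-sided inverse of $b$; the reverse implication is symmetric.

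It then remains to check that $1_C - yg$ is invertible. Since $C$ is indecomposable by the preceding lemma and $\mathcal A$ is Krull-Schmidt, the ring $\End_{\mathcal A}(C)$ is local, so $yg$ is either an automorphism of $C$ or lies in its Jacobson radical. If $yg$ were an automorphism with inverse $\gamma$, then $g\gamma$ would satisfy $y(g\gamma) = 1_C$, exhibiting a section of $y$; by Proposition \ref{exs}(2) this would force the extension realizing the triangle to vanish, contradicting the non-splitness condition (a) in Definition \ref{as}. Hence $yg$ lies in the radical, $1_C - yg$ is invertible, and so is $b$.

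The main obstacle in this plan is the exchange identity in the second paragraph: it is the step that converts an invertibility question about the potentially non-local ring $\End_{\mathcal A}(B)$ (which may fail to be local since $B$ can be decomposable) into one about the local ring $\End_{\mathcal A}(C)$. Once this identity is in place, locality of $\End_{\mathcal A}(C)$ together with the non-splitness hypothesis combine cleanly to finish the proof.
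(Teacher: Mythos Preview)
Your argument is correct. The paper's proof reaches the same conclusion by a closely related but more categorical route: it completes $(1_A,b)$ to a morphism of $\EE$-triangles $(1_A,b,c)$ and observes that if $c$ is an isomorphism then so is $b$ by \cite[Corollary~3.6]{NP}; otherwise, locality of $\End_{\mathcal A}(C)$ forces $1-c$ to be invertible, and the morphism of triangles $(0,1-b,1-c)$ shows $1-c$ factors through $y$, contradicting non-splitness. So both proofs exploit that $\End_{\mathcal A}(C)$ is local and that a section of $y$ would split the sequence; the difference is in how invertibility is transported from $\End_{\mathcal A}(C)$ back to $\End_{\mathcal A}(B)$. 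The paper invokes the two-out-of-three isomorphism property for morphisms of $\EE$-triangles, whereas you use the explicit Jacobson--Kaplansky identity $(1-gy)^{-1}=1+g(1-yg)^{-1}y$. Your approach is slightly more elementary in that it avoids appealing to \cite[Corollary~3.6]{NP}, while the paper's approach stays closer to the extriangulated formalism and avoids the explicit ring computation.
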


\begin{proof}
Let $b:B\to B$ be a morphism such that $bx=x$. Then we have the following commutative diagram
$$\xymatrix{
A \ar[r]^x \ar@{=}[d] &B \ar[d]^b \ar[r]^y &C \ar@{-->}[r] \ar[d]^c &\\
A \ar[r]_x &B \ar[r]_y &C \ar@{-->}[r]  &.
}
$$
If $c$ is an isomorphism, by \cite[Corollary 3.6]{NP}, $b$ is an isomorphism. Then $x$ is left minimal. If $c$ is not an isomorphism, since $\mathcal A$ is Krull-Schmidt and $C$ is indecomposable, $1-c$ becomes an isomorphism. From the following commutative diagram
$$\xymatrix{
A \ar[r]^x \ar[d]_0 &B \ar[d]^{1-b} \ar[r]^y &C \ar@{-->}[r] \ar[d]^{1-c} &\\
A \ar[r]_x &B \ar[r]_y &C \ar@{-->}[r]  &
}
$$
we obtain that $1-c$ factors through $y$, but this means $A\xrightarrow{x} B\xrightarrow{y} C\dashrightarrow$ splits, a contradiction. Hence $c$ is an isomorphism.
\end{proof}

\begin{rem}
Since we assume $\A$ is Krull-Schmidt, our definition of an almost split sequence is a special case of \cite[Definition 1.3]{Liu}.
\end{rem}

Let $\C,\D$ be any two subcategories of $\B$. Denote by $\C_{ \D}$ the full
subcategory of $\C$ consisting of all objects with no
non-zero direct summands from  $\D$.

\begin{prop}\label{BF}
Let $A\xrightarrow{x} B\xrightarrow{y} C\dashrightarrow$ be an almost spilt sequence in $(\B,\EE,\mathfrak{s})$ with $A,C\in \h_{ \W}$. Then this almost spilt sequence is an $\mathbb{F}$-triangle, hence is also an almost spilt sequence in $(\h,\mathbb{F},\mathfrak{s}|_{\mathbb{F}})$.
\end{prop}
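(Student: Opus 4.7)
The strategy is to first verify the given $\EE$-triangle satisfies the $\W$-monic/$\W$-epic conditions of Definition \ref{DF1}, whence Lemma \ref{exten} places it inside $\h$ and makes it an $\mathbb{F}$-triangle; then to transfer the almost split conditions into $(\h,\mathbb{F},\mathfrak{s}|_{\mathbb{F}})$ using the fullness of $\h\hookrightarrow\B$.

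By the lemma recalled right after Definition \ref{as}, $A$ and $C$ are indecomposable. Combined with $A,C\in\h_{\W}$, this forces $A,C\notin\W$: an indecomposable object lying in $\W$ would be a nonzero direct summand of itself in $\W$. The first step, then, is to argue that $x$ is $\W$-monic. Take any $a\colon A\to W$ with $W\in\W$. If $a$ were a section, then $A$ would be a direct summand of $W$, and since $\W=\T\cap\U$ is closed under direct summands (both $\T$ and $\U$ are), one would get $A\in\W$, contradicting the previous observation. Hence $a$ is not a section, and the left almost split property of $x$ supplies a factorization $a=bx$. A dual argument, using the right almost split property of $y$ and the fact that $C\notin\W$, shows $y$ is $\W$-epic. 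Thus the triangle is an $\EE_1$-triangle, and Lemma \ref{exten} (applied with $A,C\in\h$) yields $B\in\h$, so it is an $\mathbb{F}$-triangle.

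The final step is to verify that this $\mathbb{F}$-triangle is almost split in $(\h,\mathbb{F},\mathfrak{s}|_{\mathbb{F}})$. Non-splitness is automatic: a splitting in $\h$ would be a splitting in $\B$. For the left almost split condition in $\h$, let $a\colon A\to M$ be a morphism with $M\in\h$ that is not a section in $\h$. Since $\h$ is a full subcategory of $\B$, we have $\Hom_{\h}(M,A)=\Hom_{\B}(M,A)$, so $a$ is not a section in $\B$ either; the left almost split property in $\B$ gives $a=mx$ for some $m\colon B\to M$, and this $m$ is already a morphism of $\h$. The right almost split condition follows dually from the same fullness argument applied to $y$.

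The whole proof is essentially bookkeeping; the only point that needs a moment's thought is the transfer of ``(not) a section / retraction'' between $\B$ and $\h$, which is immediate from the fullness of the inclusion. The key technical ingredient is simply the observation that indecomposable objects of $\h_{\W}$ cannot appear as direct summands of objects of $\W$, which is what turns the almost split properties of $x$ and $y$ into the $\W$-monic/$\W$-epic properties required by Definition \ref{DF1}.
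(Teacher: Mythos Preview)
Your proof is correct and follows essentially the same approach as the paper's: the paper argues in one line that $A\in\h_{\W}$ forces every morphism $A\to W$ with $W\in\W$ to be a non-section, whence $x$ is $\W$-monic (dually for $y$), and then invokes Lemma \ref{exten}. You supply more detail (indecomposability, closure of $\W$ under summands, and the explicit transfer of the almost split conditions to the full subcategory $\h$), but the underlying argument is identical.
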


\begin{proof}
Since $A\in \h_{ \W}$, any morphism from $A$ to an object in $\W$ can not be a section. Hence by definition of almost split sequence, $x$ is $\W$-monic. By the similar argument we can get that $y$ is $\W$-epic. Then by Lemma \ref{exten}, $A\xrightarrow{x} B\xrightarrow{y} C\dashrightarrow$ is an $\mathbb{F}$-triangle.
\end{proof}

\cite[Proposition 5.11]{INP} shows the existence of almost split sequences in the ideal quotient category of an extriangulated category, if the original category has almost split sequences. For the convenience of the readers, we prove the following proposition, which can be realized as a special case of \cite[Proposition 5.11]{INP}.

\begin{prop}\label{P1}
Let $A\xrightarrow{x} B\xrightarrow{y} C\dashrightarrow$ be an almost spilt sequence in $(\h,\mathbb{F},\mathfrak{s}|_{\mathbb{F}})$. Then $A\xrightarrow{\underline x} B\xrightarrow{\underline y} C$ is an almost spilt sequence in $(\underline \h,\underline {\mathfrak{F}})$.
\end{prop}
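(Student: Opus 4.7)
The plan is to verify each of the three conditions in Definition \ref{as} for $A \xrightarrow{\underline{x}} B \xrightarrow{\underline{y}} C$ in $(\underline{\h}, \underline{\mathfrak{F}})$: that it is a nonsplit element of $\underline{\mathfrak{F}}$, that $\underline{x}$ is left almost split, and that $\underline{y}$ is right almost split. The general strategy will be to lift test morphisms from $\underline{\h}$ back to $\h$ and apply the corresponding property of the original almost split sequence, using the $\W$-epic/$\W$-monic conditions built into the notion of an $\mathbb{F}$-triangle to bridge the two categories.

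First, since $\underline{\mathfrak{F}}$ is by definition the image of $\mathfrak{F}$ under the canonical quotient functor $\pi\colon \h \to \underline{\h}$, the sequence $A \xrightarrow{\underline{x}} B \xrightarrow{\underline{y}} C$ lies automatically in $\underline{\mathfrak{F}}$. For the nonsplit property, suppose for contradiction that some $\underline{s}\colon C \to B$ satisfies $\underline{y}\,\underline{s} = \underline{1_C}$. Then $1_C - ys$ factors through some $W \in \W$, say as $C \xrightarrow{w_1} W \xrightarrow{w_2} C$. Since $y$ is $\W$-epic (being the second map in an $\mathbb{F}$-triangle), there is $w_3\colon W \to B$ with $yw_3 = w_2$, and hence $y(s + w_3 w_1) = 1_C$ in $\h$, contradicting the fact that the original sequence is nonsplit.

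For the left almost split condition, let $\underline{a}\colon A \to M$ in $\underline{\h}$ not be a section and choose a lift $a\colon A \to M$ in $\h$. If $a$ admitted a retraction $r$ in $\h$, then $\underline{r}\,\underline{a} = \underline{1_A}$ would make $\underline{a}$ a section in $\underline{\h}$, a contradiction; hence $a$ is not a section in $\h$. The left almost split property of $x$ then produces $m\colon B \to M$ with $a = mx$, and passing to the quotient yields $\underline{a} = \underline{m}\,\underline{x}$, as required. The right almost split property of $\underline{y}$ follows by the dual argument, using that a retraction of a lift $c\colon N \to C$ in $\h$ would descend to a retraction of $\underline{c}$ in $\underline{\h}$.

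The only point that could conceivably cause trouble is the nonsplit verification, because splitness in $\underline{\h}$ is a priori weaker than splitness in $\h$; however, the $\W$-epic property encoded in the definition of an $\mathbb{F}$-triangle converts a hypothetical section in $\underline{\h}$ into a genuine section in $\h$ by the one-line correction above, so no real obstacle arises. The almost split axioms themselves transfer cleanly because ``not a section/retraction'' is preserved when lifting from $\underline{\h}$ to $\h$.
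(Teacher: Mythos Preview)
Your proof is correct and follows essentially the same approach as the paper's. The paper's version is terser: it cites Corollary~\ref{serre} to obtain $A,C\in\h_{\W}$ and $B\notin\W$ and then asserts nonsplitness in $(\underline\h,\underline{\mathfrak{F}})$, whereas you give the explicit $\W$-epic correction $y(s+w_3w_1)=1_C$ to rule out a splitting in the quotient; for the left/right almost split conditions both arguments are identical in spirit, yours simply spelling out the contrapositive that a section (resp.\ retraction) in $\h$ would descend to one in $\underline\h$.
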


\begin{proof}
By Corollary \ref{serre}, we have $A,C\in \h_{ \W}$ and $B\notin \W$. Hence $A\xrightarrow{\underline x} B\xrightarrow{\underline y} C$ is nonsplit in $(\underline \h,\underline {\mathfrak{F}})$. By Definition \ref{as} we can get that $\underline x$ is left almost split, since $x$ is left almost split. By the same reason we know $\underline y$ is right almost split. Then by definition $A\xrightarrow{\underline x} B\xrightarrow{\underline y} C$ is an almost spilt sequence in $(\underline \h,\underline {\mathfrak{F}})$.
\end{proof}

\begin{defn}\label{eq2}
Let $(\mathcal A, \EE_{\mathcal A}, \mathfrak{s}_{\mathcal A})$ be an arbitrary Krull-Schmidt extriangulated category. Two $\EE_{\mathcal A}$-triangles
$$A\xrightarrow{x} B\xrightarrow{y} C\dashrightarrow,\quad A\xrightarrow{x'} B' \xrightarrow{y'} C\dashrightarrow$$
are said to be equivalent if we have the following commutative diagram of $\EE_{\mathcal A}$-triangles.
$$\xymatrix{
A \ar[r]^-{x} \ar@{=}[d] &B \ar[r]^-{y} \ar[d]^{\cong} &C\ar@{-->}[r] \ar[d]^{\cong} &\\
A \ar[r]_-{x'} &B' \ar[r]_-{y'} &C\ar@{-->}[r] &
}
$$
\end{defn}

\begin{prop}\label{P2}
Let $A,B,C\in \h_{ \W}$ and $A\xrightarrow{\underline x} B\xrightarrow{\underline y} C$ be a short exact sequence in $(\underline \h,\underline {\mathfrak{F}})$. Then
\begin{itemize}
\item[(a)] There is an $\mathbb{F}$-triangle
$$\xymatrix{A \ar[r]^-{x'} &B' \ar[r]^-{y'} &C\ar@{-->}[r] &}$$
such that the following diagram is a commutative diagram of short exact sequences in $(\underline \h,\underline {\mathfrak{F}})$.
$$\xymatrix@C=1cm@R=0.3cm{
&B' \ar[dr]^{\underline {y'}} \ar[dd]^{\cong}\\
A \ar[dr]_{\underline x} \ar[ur]^{\underline {x'}} &&C\\
&B \ar[ur]_{\underline y}
}
$$
(we call $\mathbb{F}$-triangle $A\xrightarrow{x'} B'\xrightarrow{y'} C\dashrightarrow$ a pre-image of $A\xrightarrow{\underline x} B\xrightarrow{\underline y} C.$)
\item[(b)] Assume $A\xrightarrow{\underline x} B\xrightarrow{\underline y} C$ is an almost split sequence in $(\underline \h,\underline {\mathfrak{F}})$, then its pre-image $A\xrightarrow{x'} B'\xrightarrow{y'} C\dashrightarrow$ is an almost split sequence in $(\h,\mathbb{F},\mathfrak{s}|_{\mathbb{F}})$.
\item[(c)] Under the assumption in {\rm (b)}, if there is another $\mathbb{F}$-triangle
$$\xymatrix{A \ar[r]^-{x''} &B'' \ar[r]^-{y''} &C\ar@{-->}[r] &}$$
which induces an equivalence between short exact sequences in $(\underline \h,\underline {\mathfrak{F}})$:
$$\xymatrix@C=1cm@R=0.3cm{
&B \ar[r]^{\underline y} \ar[dd]^{\cong} &C\ar[dd]^{\cong}\\
A \ar[dr]_{\underline {x''}} \ar[ur]^{\underline x} \\
&B'' \ar[r]_{\underline {y''}} &C
}
$$
then we have an equivalence between $\mathbb{F}$-triangles:
$$\xymatrix{
A \ar[r]^-{x'} \ar@{=}[d] &B' \ar[r]^-{y'} \ar[d]^{\cong} &C\ar@{-->}[r] \ar[d]^{\cong} &\\
A \ar[r]_-{x''} &B'' \ar[r]_-{y''} &C\ar@{-->}[r] &
}
$$
\end{itemize}
\end{prop}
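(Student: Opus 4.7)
For (a), I will start from Theorem \ref{ck} to obtain some $\mathbb{F}$-triangle $A_0 \xrightarrow{x_0} B_0 \xrightarrow{y_0} C_0 \dashrightarrow$ whose image in $\underline{\h}$ is isomorphic, as a short exact sequence, to the given $A \xrightarrow{\underline x} B \xrightarrow{\underline y} C$; in particular $A_0 \cong A$ and $C_0 \cong C$ in $\underline{\h}$. Since $A, C \in \h_{\W}$ and $\B$ is Krull-Schmidt, a standard argument (matching indecomposable summands not lying in $\W$ and using that $\W$-morphisms form an ideal contained in the Jacobson radical of each local endomorphism ring) upgrades these to direct-sum decompositions $A_0 \cong A \oplus W_A$ and $C_0 \cong C \oplus W_C$ in $\h$ with $W_A, W_C \in \W$. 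A push-out of the $\mathbb{F}$-triangle along the projection $A_0 \to A$, followed by a pull-back along the inclusion $C \hookrightarrow C_0$, then delivers the desired $\mathbb{F}$-triangle $A \xrightarrow{x'} B' \xrightarrow{y'} C \dashrightarrow$; both operations preserve $\mathbb{F}$-triangles by Proposition \ref{main1}, and the middle term $B'$ remains isomorphic to $B$ in $\underline{\h}$, yielding the commutative diagram required in the statement.

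For (b), the $\mathbb{F}$-triangle $T \colon A \xrightarrow{x'} B' \xrightarrow{y'} C \dashrightarrow$ is non-split because its image in $\underline{\h}$ is equivalent to the non-split almost split sequence $A \xrightarrow{\underline x} B \xrightarrow{\underline y} C$. For left-almost-splitness of $x'$, fix $a \colon A \to M$ in $\h$ that is not a section, and decompose $M = M_0 \oplus W_M$ with $M_0 \in \h_{\W}$ and $W_M \in \W$, writing $a = \svecv{a_0}{a_W}$. The component $a_W$ factors through $x'$ since $x'$ is $\W$-monic, and $a_0 \colon A \to M_0$ is still not a section in $\h$. Since $A$ is indecomposable with $A \notin \W$, $\End_{\h}(A)$ is local with $\W(A,A) \subseteq \operatorname{rad}\End_{\h}(A)$, which forces $\underline{a_0}$ to be non-section in $\underline{\h}$ as well. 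Applying left-almost-splitness of $\underline{x'}$ (inherited from $\underline x$ via the iso on $B$) yields $\tilde m_0 \colon B' \to M_0$ with $\underline{a_0} = \underline{\tilde m_0}\, \underline{x'}$, after which the residue $a_0 - \tilde m_0 x'$ factors through $\W$ and is absorbed through $x'$ by $\W$-monicity. Combining components gives $a = m x'$; right-almost-splitness of $y'$ is dual.

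For (c), by (b) both $T_1 \colon A \xrightarrow{x'} B' \xrightarrow{y'} C$ and $T_2 \colon A \xrightarrow{x''} B'' \xrightarrow{y''} C$ are almost split $\mathbb{F}$-triangles, and by hypothesis their images are strictly equivalent (identity on $A, C$) via some iso $\underline \phi \colon B' \to B''$. Lift to $\phi$ in $\h$; since $\phi x' - x''$ factors through $\W$ and $x'$ is $\W$-monic, we correct by a $\W$-factor to obtain $\tilde \phi$ with $\tilde\phi x' = x''$ and $\underline{\tilde\phi} = \underline \phi$ iso. Using Lemma \ref{L4} we extend to a morphism $(1_A, \tilde\phi, c) \colon T_1 \to T_2$ of $\mathbb{F}$-triangles, and then $\underline c \, \underline{y'} = \underline{y''}\, \underline{\tilde\phi} = \underline{y'}$ together with $\underline{y'}$ being an epimorphism in $\underline{\h}$ forces $\underline c = 1_C$; hence $c - 1_C$ factors through some $W_C \in \W$. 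Using $\W$-epicity of $y''$, one adjusts $\tilde\phi$ to $\tilde\phi'$ so that $y''\tilde\phi' = y'$ while still $\tilde\phi' x' = x''$ and $\underline{\tilde\phi'}$ is iso, giving the morphism $(1_A, \tilde\phi', 1_C)\colon T_1 \to T_2$. Finally \cite[Corollary 3.6]{NP} upgrades $\tilde\phi'$ to an isomorphism in $\h$, supplying the desired equivalence.

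The main obstacle is the insistence on strict equivalence (identity on $A$ and $C$) throughout: in (a) the raw output of Theorem \ref{ck} only gives correct endpoints up to isomorphism in $\underline{\h}$, so Krull-Schmidt decompositions must be tracked carefully in order to excise the stray $\W$-summands via push-out and pull-back; in (c), upgrading an iso between the middle terms in $\underline{\h}$ to a genuine equivalence with $c = 1_C$ requires a two-stage $\W$-correction that relies on both $\W$-monicity of the left map and $\W$-epicity of the right map of the $\mathbb{F}$-triangles.
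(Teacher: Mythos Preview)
Your arguments for (a) and (b) are correct and essentially match the paper's: the paper also extracts from Theorem~\ref{ck} an $\mathbb{F}$-triangle of the form $A\oplus W_1\to B\oplus W_2\to C\oplus W_3$ and then strips off the $\W$-summands by the push-out/pull-back moves you describe (using Proposition~\ref{main1}); for (b) the paper argues with indecomposable targets rather than decomposing $M=M_0\oplus W_M$, but the content is the same.

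In (c) there is a genuine slip. You assert that ``by hypothesis their images are strictly equivalent (identity on $A,C$)'' and then compute $\underline{y''}\,\underline{\tilde\phi}=\underline{y'}$, hence $\underline c=1_C$. But the hypothesis in (c) only gives an \emph{isomorphism} on $C$ in $\underline\h$, not the identity: the commutative square is $\underline{y''}\circ(\text{iso on }B)=\gamma\circ\underline y$ for some isomorphism $\gamma\colon C\to C$ in $\underline\h$. Tracing through your construction (with $\underline\phi$ the composite of the iso $B'\cong B$ from (a) and the iso $B\cong B''$ from the hypothesis) gives $\underline c\,\underline{y'}=\gamma\,\underline{y'}$, so only $\underline c=\gamma$, not $\underline c=1_C$. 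Consequently your second $\W$-adjustment, aimed at forcing $c=1_C$, cannot be carried out as written.

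The repair is immediate and in fact simpler than what you wrote: from $\underline c=\gamma$ an isomorphism in $\underline\h$ and $C\in\h_\W$ indecomposable, the local--ring argument you already used in (b) shows $c$ is an isomorphism in $\h$; then \cite[Corollary 3.6]{NP} applied to the morphism $(1_A,\tilde\phi,c)$ of $\mathbb{F}$-triangles makes $\tilde\phi$ an isomorphism, and $(1_A,\tilde\phi,c)$ is the required equivalence (Definition~\ref{eq2} only demands an isomorphism on $C$, not the identity). The paper takes a different route here: it uses that both $x'$ and $x''$ are left minimal (Lemma~\ref{min}), builds morphisms of $\mathbb{F}$-triangles in both directions over $1_A$, and concludes the middle map is an isomorphism by minimality before invoking \cite[Corollary~3.6]{NP} for $c$. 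Your approach (once corrected) reaches the same conclusion via the right-hand endomorphism instead of minimality.
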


\begin{proof}
(a) By Theorem \ref{ck}, $A\xrightarrow{\underline x} B\xrightarrow{\underline y} C$ admits an $\mathbb{F}$-triangle
$$A\oplus W_1\xrightarrow{\alpha=\left(\begin{smallmatrix}x&a_2\\a_1&a_3\end{smallmatrix}\right)} B\oplus W_2 \xrightarrow{\left(\begin{smallmatrix}y&b_2\\b_1&b_3\end{smallmatrix}\right)} C\oplus W_3\dashrightarrow$$
with $W_1,W_2,W_3\in \W$. Since $\alpha$ is $\W$-monic, we have the following commutative diagram:
$$\xymatrix{A\oplus W_1\ar[r]^-{\left(\begin{smallmatrix}x&a_2\\a_1&a_3\end{smallmatrix}\right)} \ar[d]_-{\svech{0}{1}} &B\oplus W_2 \ar[dl]^-{\svech{r_1}{r_2}} \ar[r]^-{\left(\begin{smallmatrix}y&b_2\\b_1&b_3\end{smallmatrix}\right)} &C\oplus W_3 \ar@{-->}[r] &\\
W_1}$$
Then we have the following commutative diagram.
$$\xymatrix{
W_1 \ar[d]_-{\svecv{0}{1}} \ar@{=}[r] &W_1 \ar[d]^-{\svecv{a_2}{a_3}}\\
A\oplus W_1\ar[r]^-{\left(\begin{smallmatrix}x&a_2\\a_1&a_3\end{smallmatrix}\right)} \ar[d]_-{\svech{1}{0}} &B\oplus W_2  \ar[d]_-{\svech{b_1'}{b_2'}} \ar[r]_-{\left(\begin{smallmatrix}y&b_2\\b_1&b_3\end{smallmatrix}\right)} &C\oplus W_3 \ar@{=}[d] \ar@{-->}[r] &\\
A \ar@{-->}[d] \ar[r]_{b_1'x} &B_1 \ar@{-->}[d] \ar[r]_-{\svecv{c_1'}{c_2'}} &C\oplus W_3 \ar@{-->}[r] &\\
&&
}
$$
Since $\svech{r_1}{r_2} \circ \svecv{a_2}{a_3}=1$, the second column splits. Hence $B\oplus W_2\cong B_1\oplus W_1$. By the proof of Proposition \ref{main1}, $b_1'x$ is $\W$-monic and $\svecv{c_1'}{c_2'}$ is $\W$-epic. By Lemma \ref{exten}, $B_1\in \h$. Then $\underline {b_1'}$ is an isomorphism in $\underline \h$. Since $y=c_1'b_1'$, $\underline {c_1'}=\underline y(\underline {b_1'})^{-1}$.
Now we can get the following commutative diagram.
$$\xymatrix{
A \ar[r]^{x'} \ar@{=}[d] &B' \ar[d]^{s'} \ar[r]^{y'} &C \ar@{-->}[r]  \ar[d]^-{\svecv{1}{0}} &\\
A  \ar[r]^{b_1'x} &B_1 \ar[r]_-{\svecv{c_1'}{c_2'}} \ar[d]_{c_2'} &C\oplus W_3 \ar[d]^-{\svech{0}{1}} \ar@{-->}[r] &\\
&W_3 \ar@{=}[r] \ar@{-->}[d] &W_3 \ar@{-->}[d]\\
&&
}
$$
By the proof of Proposition \ref{main1} and duality, $x'$ is $\W$-monic and $y'$ is $\W$-epic. Hence $A\xrightarrow{x'}B'\xrightarrow{y'}C\dashrightarrow$ is an $\mathbb{F}$-triangle.
Thus we have the following commutative diagram of short exact sequences in $(\underline \h,\underline {\mathfrak{F}})$.
$$\xymatrix@C=1cm@R=0.3cm{
&&B' \ar[drr]^{\underline {y'}} \ar[dd]^{(\underline {b_1'})^{-1}\underline {s'}=\underline {b'}}_{\cong}\\
A \ar[drr]_{\underline x} \ar[urr]^{\underline {x'}} &&&&C\\
&&B \ar[urr]_{\underline {y}}
}
$$

(b) Now assume that $A\xrightarrow{\underline x} B\xrightarrow{\underline y} C$ is an almost split sequence in $(\underline \h,\underline {\mathfrak{F}})$. By the proof of (a), $A\xrightarrow{\underline {x'}} B'\xrightarrow{\underline {y'}} C$ is also an almost split sequence in $(\underline \h,\underline {\mathfrak{F}})$. Hence $A\xrightarrow{x'} B'\xrightarrow{y'} C\dashrightarrow$ can not split. Let $D$ be an indecomposable object in $\h$ and $a:A\to D$ be any morphism which is not a section. If $D\in \W$, since $x'$ is $\W$-monic, $d$ factors through $x'$. If $D\notin \W$, since $\underline a$ is not a section, either, there is a morphism $d:B'\to D$ such that $\underline a=\underline {dx'}$. Thus $a-dx'$ factors through $\W$, then it factors through $x'$. Hence there is a morphism $d':B'\to D$ such that $a-dx'=d'x'$, $a=(d+d')x'$, which implies $x'$ is left almost split. Dually we can show that $y'$ is right almost split. Hence $A\xrightarrow{x'} B'\xrightarrow{y'} C\dashrightarrow$ is almost split in $(\h,\mathbb{F},\mathfrak{s}|_{\mathbb{F}})$.

(c) If there is another $\mathbb{F}$-triangle
$$\xymatrix{A \ar[r]^-{x''} &B'' \ar[r]^-{y''} &C\ar@{-->}[r] &}$$
which induces an equivalence between short exact sequences in $(\underline \h,\underline {\mathfrak{F}})$.
$$\xymatrix@C=1cm@R=0.3cm{
&B \ar[r]^{\underline y} \ar[dd]^{\underline {b''}}_{\cong} &C \ar[dd]^{\cong}\\
A \ar[dr]_{\underline {x''}} \ar[ur]^{\underline x} \\
&B'' \ar[r]_{\underline {y''}} &C
}
$$
since $\underline x$ is left minimal, by the proof of (b), we get that $x''$ is left minimal. Since $\underline {b''b'x'}=\underline {x''}$, denote $b''b'$ by $b$, $x''-bx'$ factors through $\W$. Since $x'$ is $\W$-monic, $x''-bx'$ factors through $x'$. Then there exists a morphism $t_1:B'\to B''$ such that $x''-bx'=t_1x'$. Hence we get a commutative diagram of $\mathbb{F}$-triangles
$$\xymatrix{
A \ar[r]^-{x'} \ar@{=}[d] &B' \ar[r]^-{y'} \ar[d]^{b+t_1} &C\ar@{-->}[r] \ar[d]^c &\\
A \ar[r]_-{x''} &B'' \ar[r]_-{y''} &C\ar@{-->}[r] &
}$$
By the similar argument, we can get the following commutative diagram of $\mathbb{F}$-triangles.
$$\xymatrix{
A \ar[r]^-{x''} \ar@{=}[d] &B'' \ar[r]^-{y''} \ar[d]^{e} &C\ar@{-->}[r] \ar[d] &\\
A \ar[r]_-{x'} &B' \ar[r]_-{y'} &C\ar@{-->}[r] &
}$$
Since $x'$ and $x''$ are left minimal, $e(b+t_1)$ and $(b+t_1)e$ are isomorphisms. Hence $b+t_1$ is an isomorphism, by \cite[Corollary 3.6]{NP}, $c$ is also an isomorphism.
\end{proof}

By Proposition \ref{P2}, we can get the following corollary immediately.

\begin{cor}\label{C1}
Let $A\xrightarrow{\underline x} B\xrightarrow{\underline y} C$ be an almost split sequence in $(\underline \h,\underline {\mathfrak{F}})$. Then
\begin{itemize}
\item[(1)]  If $A\xrightarrow{\underline x} B\xrightarrow{\underline y} C$ is the image of an $\mathbb{F}$-triangle $A\xrightarrow{x} B\xrightarrow{y} C\dashrightarrow$, then this $\mathbb{F}$-triangle is an almost split sequence in $(\h,\mathbb{F},\mathfrak{s}|_{\mathbb{F}})$.
\item[(2)] The pre-image of $A\xrightarrow{\underline x} B\xrightarrow{\underline y} C$ is unique up to equivalence.
\item[(3)] If there is an equivalence between short exact sequences in $(\underline \h,\underline {\mathfrak{F}})$:
$$\xymatrix@C=1cm@R=0.3cm{
&B \ar[r]^{\underline y} \ar[dd]^{\cong} &C\ar[dd]^{\cong}\\
A \ar[dr]_{\underline {x'}} \ar[ur]^{\underline x} \\
&B' \ar[r]_{\underline {y'}} &C
}
$$
then any pre-image of $A\xrightarrow{\underline x} B\xrightarrow{\underline y} C$ and any pre-image of $A\xrightarrow{\underline {x'}} B'\xrightarrow{\underline {y'}} C$ are equivalent.
\end{itemize}
\end{cor}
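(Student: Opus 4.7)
The plan is to derive all three parts of Corollary~\ref{C1} as direct applications of the three clauses of Proposition~\ref{P2}, with the main mechanism being: every short exact sequence in $(\underline{\h},\underline{\mathfrak{F}})$ has a preferred pre-image constructed in part~(a), and part~(c) forces any two pre-images to be equivalent. The only conceptual point to verify is that ``being almost split'' is transported along an equivalence of $\mathbb{F}$-triangles, which is immediate since the defining lifting properties depend only on the morphisms $x',y'$ up to composition with isomorphisms at the middle term.

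For part~(1), I would start from the given almost split sequence $A\xrightarrow{\underline x}B\xrightarrow{\underline y}C$ in $(\underline{\h},\underline{\mathfrak{F}})$ and apply Proposition~\ref{P2}(a) to obtain a pre-image $\mathbb{F}$-triangle $A\xrightarrow{x_0}B_0\xrightarrow{y_0}C\dashrightarrow$. By Proposition~\ref{P2}(b), this pre-image is almost split in $(\h,\mathbb{F},\mathfrak{s}|_{\mathbb{F}})$. Now the hypothesis supplies a second $\mathbb{F}$-triangle $A\xrightarrow{x}B\xrightarrow{y}C\dashrightarrow$ whose image in $\underline{\h}$ equals $A\xrightarrow{\underline x}B\xrightarrow{\underline y}C$, so trivially the identity at $B$ gives the equivalence of short exact sequences in $(\underline{\h},\underline{\mathfrak{F}})$ required to invoke Proposition~\ref{P2}(c). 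That clause then supplies an isomorphism $B\xrightarrow{\sim}B_0$ fitting into a commutative diagram of $\mathbb{F}$-triangles, transporting the almost split property from $(x_0,y_0)$ to $(x,y)$.

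Part~(2) I would obtain as the special case of Proposition~\ref{P2}(c) where the equivalence in $\underline{\h}$ is the identity: given two pre-images $A\xrightarrow{x'}B'\xrightarrow{y'}C\dashrightarrow$ and $A\xrightarrow{x''}B''\xrightarrow{y''}C\dashrightarrow$ of the same almost split sequence, both project to $A\xrightarrow{\underline x}B\xrightarrow{\underline y}C$, so the tautological identity equivalence between their images satisfies the hypothesis of~(c), which directly yields the equivalence between $\mathbb{F}$-triangles.

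Part~(3) combines the previous ideas: assume the two almost split sequences $A\xrightarrow{\underline x}B\xrightarrow{\underline y}C$ and $A\xrightarrow{\underline{x'}}B'\xrightarrow{\underline{y'}}C$ are equivalent in $(\underline{\h},\underline{\mathfrak{F}})$, and pick pre-images $\sigma$ and $\sigma'$ respectively. By Proposition~\ref{P2}(a), $\sigma$ projects to $A\xrightarrow{\underline x}B\xrightarrow{\underline y}C$ up to an isomorphism at the middle term in $\underline{\h}$; composing with the assumed equivalence, the image of $\sigma$ is equivalent in $\underline{\h}$ to $A\xrightarrow{\underline{x'}}B'\xrightarrow{\underline{y'}}C$, which is in turn the image of $\sigma'$. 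Thus $\sigma$ and $\sigma'$ satisfy the hypothesis of Proposition~\ref{P2}(c), yielding the desired equivalence of $\mathbb{F}$-triangles. The main (minor) obstacle is keeping track of which isomorphisms appear in $\underline{\h}$ versus in $\h$, but since~(c) is precisely the statement that a projected equivalence lifts, no further argument is required.
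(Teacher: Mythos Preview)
Your proposal is correct and matches the paper's approach: the paper simply states that Corollary~\ref{C1} follows immediately from Proposition~\ref{P2}, and your argument is a faithful unpacking of that implication, using (a) to produce the distinguished pre-image, (b) to certify it almost split, and (c) to compare it with any other $\mathbb{F}$-triangle projecting to the given sequence. The only small imprecision is in part~(2): Proposition~\ref{P2}(c) as stated compares an arbitrary $\mathbb{F}$-triangle with the specific pre-image built in~(a), not two arbitrary pre-images with each other, so strictly one applies~(c) twice and uses transitivity of equivalence---but this is a cosmetic point and does not affect the validity of your argument.
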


By \cite[Theorem 4.19]{Sh}, since $\underline \h$ is quasi-abelian, a short exact sequence $A\xrightarrow{\underline x} B\xrightarrow{\underline y} C$ in $(\underline \h, \underline {\mathfrak{F}})$ is almost split if and only if $\underline x$ is minimal left almost split (resp. $\underline y$ is minimal right almost split). Then we can get the following corollary.

\begin{cor}
An $\mathbb{F}$-triangle $A\xrightarrow{x} B\xrightarrow{y} C\dashrightarrow$ is almost split if and only if $x$ is minimal left almost split (resp. $y$ is minimal right almost split).
\end{cor}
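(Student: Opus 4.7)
The plan is to reduce the corollary to the quasi-abelian characterization from \cite[Theorem 4.19]{Sh} recalled just above, via the correspondence between almost split $\mathbb{F}$-triangles in $(\h,\mathbb F,\mathfrak s|_{\mathbb F})$ and almost split short exact sequences in $(\underline \h,\underline{\mathfrak F})$ coming from Propositions \ref{P1} and \ref{P2} together with Corollary \ref{C1}. The forward direction is immediate: if the $\mathbb{F}$-triangle is almost split, then $x$ is left almost split by Definition \ref{as}(b) and left minimal by Lemma \ref{min}.

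For the reverse direction, I would show that if $x$ is minimal left almost split in $(\h,\mathbb F,\mathfrak s|_{\mathbb F})$, then the induced morphism $\underline x$ is minimal left almost split in the quasi-abelian category $\underline \h$. Once this is established, the quasi-abelian result yields that $A\xrightarrow{\underline x} B\xrightarrow{\underline y} C$ is an almost split short exact sequence in $\underline \h$, and Corollary \ref{C1}(1) lifts this back to the statement that the original $\mathbb{F}$-triangle is almost split. Transferring the ``left almost split'' property is routine: given $\underline a\colon A\to M$ in $\underline \h$ that is not a section, any representative $a\colon A\to M$ in $\h$ fails to be a section in $\h$ (a retraction for $a$ would descend to one for $\underline a$), so by hypothesis there is an $m\colon B\to M$ with $a=mx$, whence $\underline a=\underline m\,\underline x$.

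The main obstacle is the transfer of left minimality. Assume $\underline b\colon B\to B$ satisfies $\underline b\,\underline x=\underline x$, so $bx-x$ factors through some $W\in\W$ as $bx-x=tw$ with $w\colon A\to W$ and $t\colon W\to B$. Because the $\mathbb F$-triangle forces $x$ to be $\W$-monic (Definition \ref{DF1}(a)), there exists $w'\colon B\to W$ with $w=w'x$. Setting $b':=b-tw'$ gives $b'x=bx-tw'x=bx-tw=x$, so left minimality of $x$ in $\h$ forces $b'$ to be an isomorphism. Since $b-b'=tw'$ factors through $\W$, we have $\underline b=\underline{b'}$ in $\underline \h$, and this is an isomorphism there; this is the required left minimality of $\underline x$. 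The ``resp.''\ clause for $y$ is handled by the dual argument, using that $y$ is $\W$-epic.
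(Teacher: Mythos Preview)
Your proof is correct and follows essentially the same route as the paper's: both reduce the ``if'' direction to showing that minimal left almost split for $x$ descends to minimal left almost split for $\underline x$, then invoke \cite[Theorem 4.19]{Sh} and lift back via Corollary~\ref{C1}(1) (the paper cites Proposition~\ref{P2} directly). Your treatment of left minimality---factoring $bx-x$ through $\W$, using that $x$ is $\W$-monic to absorb the $\W$-factor into a morphism $B\to B$, and concluding via minimality of $x$---is exactly the paper's argument, spelled out in slightly more detail.
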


\begin{proof}
We only discuss the condition associated with $x$, the other condition is by dual.

The ``only if" part is followed by Definition \ref{as} and Proposition \ref{min}. Now we show the ``if" part. By Proposition \ref{P2}, we only need to prove that if $x$ is minimal left almost split, so is $\underline x$. By definition, we can obtain that $\underline x$ is left almost split when $x$ is.

Let $b:B\to B$ be a morphism such that $\underline x=\underline {bx}$. Then $x-bx$ factors through $\W$. Thus there is a morphism $b':B\to B$ such that $b'x=x-bx$ (note that $b'$ factors through $\W$). Then $x=(b+b')x$. Since $x$ is left minimal, $b+b'$ is an isomorphism, hence $\underline b$ is an isomorphism, which implies that $\underline x$ is left minimal.
\end{proof}

Now we can conclude the following result.

\begin{thm}\label{main3}
There is a one-to-one correspondence (up to equivalences) between the almost split sequences in $(\h,\mathbb{F},\mathfrak{s}|_{\mathbb{F}})$ and the almost split sequences in $(\underline \h,\underline {\mathfrak{F}})$.
\end{thm}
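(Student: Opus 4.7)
The plan is to combine the two one-way results already established---Proposition \ref{P1} on descent, and Proposition \ref{P2}(b) together with Corollary \ref{C1} on lifting---into explicit mutually inverse maps at the level of equivalence classes. No new constructions are needed; the theorem is essentially a packaging of what is already in place.

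First I would define the descent map $\Phi$ by sending the equivalence class of an almost split sequence $A \xrightarrow{x} B \xrightarrow{y} C \dashrightarrow$ in $(\h,\mathbb{F},\mathfrak{s}|_{\mathbb{F}})$ to the class of $A \xrightarrow{\underline{x}} B \xrightarrow{\underline{y}} C$ in $(\underline{\h},\underline{\mathfrak{F}})$. That the image is an almost split sequence is exactly Proposition \ref{P1}. Well-definedness on equivalence classes (in the sense of Definition \ref{eq2}) is immediate because the canonical quotient functor $\pi\colon \h \to \underline{\h}$ preserves commutative diagrams and sends isomorphisms to isomorphisms.

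Next I would define the lifting map $\Psi$ by choosing, for each almost split sequence $A \xrightarrow{\underline{x}} B \xrightarrow{\underline{y}} C$ in $(\underline{\h},\underline{\mathfrak{F}})$, a pre-image $A \xrightarrow{x'} B' \xrightarrow{y'} C \dashrightarrow$ as provided by Proposition \ref{P2}(a), and setting $\Psi$ to send the class of the former to the class of the latter. Proposition \ref{P2}(b) guarantees the pre-image is itself almost split in $(\h,\mathbb{F},\mathfrak{s}|_{\mathbb{F}})$, while Corollary \ref{C1}(2)--(3) ensures both that the chosen pre-image is unique up to equivalence of $\mathbb{F}$-triangles, and that equivalent almost split sequences in $\underline{\h}$ yield equivalent pre-images; this is precisely what makes $\Psi$ well-defined on equivalence classes.

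Finally I would verify that $\Phi$ and $\Psi$ are mutually inverse. For $\Phi \circ \Psi$, the commutative triangle exhibited in Proposition \ref{P2}(a) is itself an equivalence in $(\underline{\h},\underline{\mathfrak{F}})$ between $A \xrightarrow{\underline{x'}} B' \xrightarrow{\underline{y'}} C$ and the original $A \xrightarrow{\underline{x}} B \xrightarrow{\underline{y}} C$. For $\Psi \circ \Phi$, starting from an almost split sequence $A \xrightarrow{x} B \xrightarrow{y} C \dashrightarrow$ in $(\h,\mathbb{F},\mathfrak{s}|_{\mathbb{F}})$, its image $A \xrightarrow{\underline{x}} B \xrightarrow{\underline{y}} C$ is almost split by Proposition \ref{P1}, and the original $\mathbb{F}$-triangle is visibly a pre-image of this image, so Corollary \ref{C1}(2) identifies $\Psi\Phi$ with the identity on equivalence classes. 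The only potential obstacle is purely bookkeeping---keeping straight the two distinct equivalence relations of Definition \ref{eq2} (one in $(\h,\mathbb{F},\mathfrak{s}|_{\mathbb{F}})$ and one in $(\underline{\h},\underline{\mathfrak{F}})$) and checking they match up under $\pi$---but this matching is exactly the content of Corollary \ref{C1}(3), so no genuine difficulty remains.
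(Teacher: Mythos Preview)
Your proposal is correct and follows essentially the same approach as the paper: the paper defines the two maps $\pi$ (your $\Phi$) and $\sigma$ (your $\Psi$) and then simply cites Propositions \ref{P1}, \ref{P2} and Corollary \ref{C1} to conclude they are mutually inverse up to equivalence. Your write-up is somewhat more explicit about the verification of $\Phi\Psi$ and $\Psi\Phi$, but the underlying argument is identical.
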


\begin{proof}
The correspondences are the following:
\begin{align*}
\sigma: ~\text{almost split sequence } A\xrightarrow{\underline x} B\xrightarrow{\underline y} C~\Longrightarrow \text{ its pre-image (a fixed one)};\\
\pi: ~\text{almost split sequence } A\xrightarrow{x} B\xrightarrow{y} C\dashrightarrow~\Longrightarrow~A\xrightarrow{\underline x} B\xrightarrow{\underline y} C.\quad \quad \quad
\end{align*}
By Proposition \ref{P1}, Proposition \ref{P2} and Corollary \ref{C1}, $\sigma$ and $\pi$ are one-to-one up to equivalences.
\end{proof}

According to Theorem \ref{main3}, we have the following corollary.

\begin{cor}
$(\h,\mathbb{F},\mathfrak{s}|_{\mathbb{F}})$ has almost split sequences if and only if $(\underline \h,\underline {\mathfrak{F}})$ has almost split sequences.
\end{cor}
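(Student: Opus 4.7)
The plan is to deduce this corollary directly from Theorem \ref{main3}. That theorem already establishes a one-to-one correspondence (up to equivalence) between almost split sequences in $(\h,\mathbb{F},\mathfrak{s}|_{\mathbb{F}})$ and in $(\underline \h,\underline {\mathfrak{F}})$ via the quotient functor $\pi$ and its ``pre-image'' inverse. So the only content that remains is to check that, under the Krull-Schmidt assumption on $\B$, this bijection correctly matches indecomposable objects and their (non-)projective/(non-)injective statuses across the two categories; once this compatibility is in place, ``having almost split sequences'' passes between the two categories term-by-term.

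For the forward direction, assume $(\h,\mathbb{F},\mathfrak{s}|_{\mathbb{F}})$ has almost split sequences. Given an indecomposable non-projective object $\overline C$ of $\underline \h$, I would use the Krull-Schmidt property of $\B$ together with the fact that $\W\subseteq \ker\pi$ to lift $\overline C$ to an indecomposable representative $C\in \h$ with $C\notin \W$. Since $\pi$ carries $\mathbb{F}$-triangles to short exact sequences in $(\underline \h,\underline{\mathfrak{F}})$, projectivity of $C$ with respect to $\mathbb{F}$ would force projectivity of $\overline C$ in $(\underline \h,\underline{\mathfrak{F}})$; hence $C$ must be non-projective in $(\h,\mathbb{F})$. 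By hypothesis there is an almost split sequence $A\to B\to C \dashrightarrow$ in $(\h,\mathbb{F},\mathfrak{s}|_{\mathbb{F}})$, and Proposition \ref{P1} shows that its image under $\pi$ is an almost split sequence ending at $\overline C$ in $(\underline \h,\underline{\mathfrak{F}})$. The case of non-injective objects at the source is dual.

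For the reverse direction, assume $(\underline \h,\underline{\mathfrak{F}})$ has almost split sequences, and let $C$ be an indecomposable object of $\h_{\W}$ that is non-projective with respect to $\mathbb{F}$. I would argue that $\overline C$ is non-projective in $(\underline \h,\underline{\mathfrak{F}})$: by the pre-image construction in Proposition \ref{P2}(a), every short exact sequence in $\underline{\mathfrak{F}}$ ending at $\overline C$ lifts to an $\mathbb{F}$-triangle ending at $C$, so projectivity of $\overline C$ would entail projectivity of $C$. The assumption then yields an almost split sequence ending at $\overline C$, and Proposition \ref{P2}(b) shows that its pre-image is an almost split sequence ending at $C$ in $(\h,\mathbb{F},\mathfrak{s}|_{\mathbb{F}})$; the non-injective case is again dual. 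The principal obstacle in this proof is precisely this compatibility check of projective/injective status across the ideal quotient $\pi$, which rests on the observation that $\W$ is a subcategory of projective-injective objects of $(\h,\mathbb{F},\mathfrak{s}|_{\mathbb{F}})$ that becomes zero in $\underline \h$; everything else is a formal consequence of Theorem \ref{main3}.
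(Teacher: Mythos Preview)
Your approach is the paper's approach: the paper offers no proof at all beyond ``According to Theorem~\ref{main3}'', so you are simply filling in the compatibility details that the authors leave implicit. The forward direction is fine as written.

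One small correction in the reverse direction: your appeal to Proposition~\ref{P2}(a) is misplaced. That proposition assumes $A,B,C\in\h_\W$ and produces a pre-image of a given short exact sequence, which is the wrong direction for showing that projectivity of $\overline C$ forces projectivity of $C$. The clean argument goes the other way: take any $\mathbb{F}$-triangle $A\xrightarrow{x}B\xrightarrow{y}C\dashrightarrow$; its image under $\pi$ is a short exact sequence in $(\underline\h,\underline{\mathfrak{F}})$ ending at $\overline C$, which splits if $\overline C$ is projective. A section $\underline s$ of $\underline y$ gives $1_C-ys$ factoring through $\W$, and since $y$ is $\W$-epic this difference factors through $y$, producing an honest section of $y$. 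Thus the $\mathbb{F}$-triangle splits and $C$ is projective. With this adjustment your argument is complete and matches the paper's intended (but unwritten) proof.
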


\begin{rem}
We claim that Theorem \ref{main3} has a more general version. Note that $(\h,\mathbb{F},\mathfrak{s}|_{\mathbb{F}})$ is an extriangulated category in which $\W$ is a subcategory of projective-injective objects. $(\underline \h,\underline {\mathfrak{F}})$ is the same as $(\underline \h,\underline {\mathbb{F}},\underline {\mathfrak{s}|_{\mathbb{F}}})$. Hence by \cite[Proposition 5.11]{INP} and the proof Proposition \ref{P2}, we can conclude the following result:
\begin{itemize}
\item Let $(\mathcal A, \EE_{\mathcal A}, \mathfrak{s}_{\mathcal A})$ be a Krull-Schmidt extriangulated category and $\mathfrak{D}$ be a subcategory of projective-injective objects of $\A$. Then there is a one-to-one correspondence (up to equivalences) between the almost split sequences in $(\A,\EE_{\A},\mathfrak{s}_{\A})$ and the almost split sequences in $(\A/\mathfrak{D},\EE_{\A}/\mathfrak{D},\mathfrak{s}_{\A}/\mathfrak{D})$ (one can find details of this notion in \cite[Corollary 5.9, Proposition 5.11]{INP}).
\end{itemize}
\end{rem}


\section{Applications}

 We give some applications of the main results of this article, which are associated with  a cluster tilting object. 
\medskip

Let $\C$ be a Krull-Schmidt, Hom-finite, $k$-linear triangulated category with shift factor {\rm [1]}. Let $T\in \C$ be a cluster tiling object. Denote $\add T$ by $\T$. Then $(\T,\T)$ is a cotorsion pair on $\C$ and the heart of $(\T,\T)$ is just $\C/\T$. Moreover, we have $\C/\T\simeq \mod\End_{\C}(T[-1])$ by \cite[Corollaries 4.5, 4.6]{KZ} (also see \cite[Corollary 6.5]{IY}). Hence $\C/\T$ has Auslander-Reiten sequences. Note that Auslander-Reiten triangles in $\C$ (see \cite{Ha} and \cite{RV}) and Auslander-Reiten sequences in $\C/\T$ are both almost split sequences defined in Definition \ref{as}. By the results in \cite{KZ}, an object in $\C/\T$ is injective (resp. projective) if and only if it belongs to $\T[1]$ (resp. $\T[-1]$). For any indecomposable object $X\notin (\T\cup \T[1])$ (resp. $Z\notin (\T\cup \T[-1])$), it admits an Auslander-Reiten sequence $X\xrightarrow{\underline a} Y\xrightarrow {\underline b} Z$ in $\C/\T$. By Proposition \ref{P2}, we can find a pre-image of it, which is an Auslander-Reiten triangle in $\C$. By Theorem \ref{main3}, we can conclude the following result.

\begin{prop}
There is a one-to-one correspondence (up to equivalences) between the Auslander-Reiten triangles $X\to Y\to Z\to X[1]$ in $\C$ with $X,Z\in \C_\T$ and the Auslander-Reiten sequences in $\C/\T$.
\end{prop}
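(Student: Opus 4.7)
The strategy is to realize the claim as the special case of Theorem \ref{main3} applied to the cotorsion pair $(\T,\T)$ on $\C$, combined with the observation that almost split sequences in $\C$ with end-terms outside $\T$ coincide with almost split sequences in the associated extriangulated subcategory $(\h,\mathbb{F},\mathfrak{s}|_{\mathbb{F}})$.

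First, I would identify the relevant categories. For the cotorsion pair $(\T,\T)$ we have $\W=\T\cap\T=\T$. The defining $\EE$-triangles of $(\T,\T)$ show immediately that $\h=\B^{+}\cap\B^{-}=\C$, hence $\underline{\h}=\C/\T$. In particular $\h_{\W}=\C_{\T}$. Under the equivalence $\C/\T\simeq \mod\End_{\C}(T[-1])$ recalled in the excerpt, the almost split sequences in $(\underline{\h},\underline{\mathfrak{F}})$ are precisely the Auslander--Reiten sequences in $\C/\T$, so the right-hand side of the claimed bijection is exactly the set of almost split sequences in $(\underline{\h},\underline{\mathfrak{F}})$ in the sense of Definition \ref{as}.

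Next, I would match the left-hand side with almost split sequences in $(\h,\mathbb{F},\mathfrak{s}|_{\mathbb{F}})$. Given an Auslander--Reiten triangle $X\to Y\to Z\to X[1]$ in $\C$ with $X,Z\in\C_{\T}$, Proposition \ref{BF} shows that it is an $\mathbb{F}$-triangle, and since the objects of $(\h,\mathbb{F},\mathfrak{s}|_{\mathbb{F}})$ coincide with those of $\C$, the ``not a section/retraction'' and ``nonsplit'' conditions in Definition \ref{as} transfer verbatim; thus it is an almost split sequence in $(\h,\mathbb{F},\mathfrak{s}|_{\mathbb{F}})$. Conversely, every almost split sequence in $(\h,\mathbb{F},\mathfrak{s}|_{\mathbb{F}})$ is in particular a distinguished triangle in $\C$; its end-terms are indecomposable and lie outside $\W=\T$ by Corollary \ref{serre}, hence in $\C_{\T}$, and the same verbatim translation of the almost split conditions makes it an Auslander--Reiten triangle in $\C$. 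So the left-hand side of the claimed bijection coincides with the set of almost split sequences in $(\h,\mathbb{F},\mathfrak{s}|_{\mathbb{F}})$.

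Finally, I would invoke Theorem \ref{main3} to obtain the one-to-one correspondence (up to equivalence) between almost split sequences in $(\h,\mathbb{F},\mathfrak{s}|_{\mathbb{F}})$ and in $(\underline{\h},\underline{\mathfrak{F}})$, which by the two identifications above is exactly the stated bijection. The only delicate step is the last verification in the previous paragraph: one must check carefully that an almost split sequence in $(\h,\mathbb{F},\mathfrak{s}|_{\mathbb{F}})$ really is almost split in $\C$ (not merely among $\mathbb{F}$-triangles). This follows because morphisms in both categories coincide, so conditions (b) and (c) of Definition \ref{as} test the same property; this is the main (but mild) obstacle, and it is handled by the preceding remarks together with Corollary \ref{serre}.
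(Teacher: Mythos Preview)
Your proposal is correct and follows essentially the same route as the paper: the paper deduces the proposition directly from Theorem \ref{main3} applied to the cotorsion pair $(\T,\T)$, using Proposition \ref{BF} for the identification of Auslander--Reiten triangles with end-terms in $\C_{\T}$ with almost split sequences in $(\h,\mathbb{F},\mathfrak{s}|_{\mathbb{F}})$, and the preceding remarks to identify Auslander--Reiten sequences in $\C/\T$ with almost split sequences in $(\underline{\h},\underline{\mathfrak{F}})$. Your write-up is somewhat more explicit than the paper about the converse identification (that an almost split sequence in $(\h,\mathbb{F},\mathfrak{s}|_{\mathbb{F}})$ is an Auslander--Reiten triangle in $\C$), which the paper leaves implicit; your justification via $\h=\C$ and Corollary \ref{serre} is exactly what is needed.
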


 Additionally, we have made the following observation.

\begin{lem}\label{6-1}
Let $X,Z\in \C$ be indecomposable objects.
\begin{itemize}
\item[(a-1)] If $X\notin (\T\cup \T[1])$, then $X$ admits an Auslander-Reiten triangle $X\to Y'\to Z'\to X[1]$ where $Y'\notin \T$ and $Z'\notin \T$.
\item[(a-2)] If $X\in \T[1]$, then $X$ admits an Auslander-Reiten triangle $X\to Y'\to Z'\to X[1]$ with $Z'\in \T$.
\item[(b-1)] If $Z\notin (\T\cup \T[-1])$, then $Z$ admits an Auslander-Reiten triangle $X''\to Y''\to Z\to X[1]$, we can find that $Y''\notin \T$ and $X''\notin \T$.
\item[(b-2)] If $Z\in \T[-1]$, then $Z$ admits an Auslander-Reiten triangle $X''\to Y''\to Z\to X[1]$ with $X''\in \T$.
\end{itemize}
\end{lem}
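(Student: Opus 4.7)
The lemma splits into four parts. Parts (a-1) and (b-1) handle the generic case; (a-2) and (b-2) treat the boundary cases $X\in\T[1]$ and $Z\in\T[-1]$, which need a separate treatment since these objects are injective (resp.\ projective) in $\C/\T$ and no AR sequence in $\C/\T$ starts (resp.\ ends) at them.

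For (a-1): since $X$ is indecomposable with $X\notin\T\cup\T[1]$, its image in $\C/\T\simeq\mod\End_{\C}(T[-1])$ is a nonzero, non-injective indecomposable module, and so admits an Auslander--Reiten sequence $X\to Y\to Z$ in $\C/\T$. A nonsplit AR sequence in an abelian category must have nonzero middle term, hence $Y\neq 0$ in $\C/\T$, so $Y\notin\T$; and $Z\notin\T$ as an indecomposable endpoint. Applying Proposition~\ref{P2}(b) to the cotorsion pair $(\T,\T)$, whose heart is $\h=\C$, we obtain a pre-image $\mathbb{F}$-triangle $X\to Y'\to Z'\to X[1]$ in $\C$ which is almost split in $(\C,\mathbb{F},\mathfrak{s}|_{\mathbb{F}})$. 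Because the almost split conditions in Definition~\ref{as} are stated purely in terms of morphisms in the ambient category, and here the ambient category is $\C$ itself, the lifted triangle is an Auslander--Reiten triangle in the usual triangulated sense. Part (b-1) is dual, using the AR sequence ending at $Z$ in $\C/\T$.

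For (a-2) with $X=T'[1]$ ($T'$ an indecomposable summand of $T$), I construct the Auslander--Reiten triangle directly from the cluster-tilting structure. Using the equivalence $\add T\simeq\proj\End_{\C}(T)^{\op}$, pick a minimal left almost split morphism $\alpha\colon T'\to B$ of $\add T$, complete $\alpha$ to a distinguished triangle
\[
T'\xrightarrow{\alpha}B\to C\to T'[1]
\]
in $\C$, and shift by $[1]$ to obtain $T'[1]\to B[1]\to C[1]\to T'[2]$. The plan is to verify that (i) $C\in\T[-1]$, so that $Z':=C[1]\in\T$, and (ii) the shifted triangle is almost split in $\C$. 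For (i) we apply $\Hom_{\C}(T,-)$ to the original triangle; the rigidity vanishing $\Hom_{\C}(T,T'[1])=\EE(T,T')=0$ then identifies $\Hom_{\C}(T,C)$ with the cokernel, taken in $\mod\End_{\C}(T)^{\op}$, of the minimal left almost split morphism between the two indecomposable projectives corresponding to $T'$ and $B$, and matching this cokernel against the classification of $\T[-1]$-objects in the heart of the shifted cotorsion pair $(\T[-1],\T[-1])$ pins down $C$ up to summands of $\T$; a Krull--Schmidt and minimality argument on $\alpha$ then removes any spurious $\T$-summands. For (ii) the non-splitness, indecomposability of the endpoints, and the left/right almost split properties are inherited from the corresponding properties of $\alpha$ in $\add T$ via the long exact sequences of $\Hom_{\C}(-,M)$ applied to the triangle, together with the rigidity vanishing which restricts the factorisations to pass through $\add T$. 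Part (b-2) is the formal dual, built from a minimal right almost split morphism $\gamma\colon B''\to T'$ in $\add T$ and a shift by $[-1]$.

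The main obstacle is step (i), namely to show that the cone $C$ of $\alpha$ (dually, the fibre $A$ of $\gamma$) lies in $\T[-1]$ (resp.\ $\T[1]$). This requires a careful coordination of three ingredients: the rigidity $\EE(T,T)=0$, which kills the relevant obstruction groups; the Koenig--Zhu style identifications of the indecomposable projectives and injectives in the hearts of the shifted cotorsion pairs $(\T[n],\T[n])$ for $n\in\{-1,0,1\}$; and the Krull--Schmidt structure of $\C$, which allows one to extract an indecomposable $Z'\in\T$ (resp.\ $X''\in\T$) after possibly cancelling trivial summands. Once this step is in hand, the transfer of the almost split property from $\add T$ to $\C$ is a routine diagram chase with the long exact sequences of Proposition~\ref{exs}.
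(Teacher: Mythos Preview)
Your treatment of (a-1) and (b-1) matches the paper's: lift the Auslander--Reiten sequence from $\C/\T$ via Proposition~\ref{P2}.

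For (a-2) and (b-2), however, your approach diverges from the paper's, and the route you propose has a genuine gap. The paper does \emph{not} build the AR triangle from a source map in $\add T$. Instead it observes that if $X\in\T[1]$ then $X[-2]\in\T[-1]$ is an indecomposable projective of $\C/\T$, hence admits an AR sequence \emph{starting} at it; by the argument already used for (a-1) (Proposition~\ref{P2}) this lifts to an AR triangle $X[-2]\to Y_1'\to Z_1\to X[-1]$ in $\C$. Shifting by $[2]$ yields an AR triangle $X\to Y_1'[2]\to Z_1[2]\to X[1]$. Now if $Z_1[2]\notin\T$, Proposition~\ref{BF} forces this triangle to descend to an AR sequence in $\C/\T$ starting at the injective object $X$, a contradiction; hence $Z_1[2]\in\T$. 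Part (b-2) is dual. Thus the ``main obstacle'' you identify is bypassed entirely by reducing to the case already handled and arguing by contradiction.

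The gap in your direct construction is in step~(ii). A minimal left almost split morphism $\alpha\colon T'\to B$ in the subcategory $\add T$ need not shift to a left almost split morphism $\alpha[1]\colon T'[1]\to B[1]$ in the whole of $\C$: the defining property of $\alpha$ only controls non-sections $T'\to T''$ with $T''\in\add T$, whereas you must factor every non-section $T'[1]\to M$ for arbitrary $M\in\C$. Your appeal to ``rigidity vanishing which restricts the factorisations to pass through $\add T$'' does not close this; rigidity gives $\Hom_{\C}(\T,\T[1])=0$ but says nothing about $\Hom_{\C}(C,M)$ for general $M$, which is what governs whether $f$ factors through $\alpha[1]$ in the rotated triangle $C\to T'[1]\xrightarrow{\alpha[1]}B[1]$. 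Your plan for step~(i) is likewise only a sketch: identifying $\Hom_{\C}(T,C)$ as a cokernel in $\mod\End_{\C}(T)^{\op}$ does not by itself pin down $C$ inside $\C$, and the ``matching against the classification of $\T[-1]$-objects'' is not made precise. The paper's shift-and-contradict argument stays entirely within the machinery of Section~5 and avoids both difficulties.
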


\begin{proof}
We check (a-1) and (a-2), the other two is by dual.

If $X\notin (\T\cup \T[1])$, then in $\C/\T$ it admits an Auslander-Reiten sequence $0\to X\to Y\to Z'\to 0$. By Proposition \ref{P2}, $X$ admits an Auslander-Reiten triangle $X\to Y'\to Z'\to X[1]$ such that $Y'\notin \T$ and $Z'\notin \T$.

If $X\in \T[1]$, then $X[-2]\in \T[-1]$. Hence $X[-2]$ admits an Auslander-Reiten sequence $$0\to X[-2]\to Y_1\to Z_1\to 0$$ in $\C/\T$. By Proposition \ref{P2}, we can obtain an Auslander-Reiten triangle $X[-2]\to Y_1'\to Z_1\to X[-1]$ in $\C$. Then $X$ admits an Auslander-Reiten triangle $X\xrightarrow{a} Y_1[2]'\xrightarrow{b} Z_1[2]\to X[1]$.

If $Z_1[2]\notin \T$, by Proposition \ref{BF}, $0\to X\xrightarrow{\underline a} Y_1[2]'\xrightarrow{\underline b} Z_1[2]\to 0$ is an Auslander-Reiten sequence in $\C/\T$. But $X$ is injective in $\C/\T$, a contradiction. Hence $Z_1[2]\in \T$.
\end{proof}

By the proof of this lemma, we have the following corollary.

\begin{cor}\label{6-2}
Let $X\in \T$ be an indecomposable object. Then $X$ admits two Auslander-Reiten triangles
$$X\to Y'\to Z'\to X[1]~~\mbox{and}~~ X[-1]\to Z''\to Y''\to X$$
with $Z',Z''\in \C_\T$.
\end{cor}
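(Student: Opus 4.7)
The plan is to deduce Corollary \ref{6-2} from Lemma \ref{6-1} by a direct shift argument: for indecomposable $X\in \T$, both $X[1]\in \T[1]$ and $X[-1]\in \T[-1]$, so parts (a-2) and (b-2) of the lemma apply to $X[1]$ and $X[-1]$, respectively, and the two triangles required by the corollary will be obtained by shifting the resulting triangles back to $X$.

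Concretely, I would first apply Lemma \ref{6-1}(a-2) to $X[1]\in \T[1]$ to get an Auslander-Reiten triangle $X[1]\to A\to B\to X[2]$ in $\C$ with $B\in \T$. Since $[-1]$ is a triangulated autoequivalence of $\C$, the shifted triangle $X\to A[-1]\to B[-1]\to X[1]$ is again an Auslander-Reiten triangle in $\C$, so I set $Y':=A[-1]$ and $Z':=B[-1]\in \T[-1]$. Dually, applying Lemma \ref{6-1}(b-2) to $X[-1]\in \T[-1]$ produces an Auslander-Reiten triangle $C\to D\to X[-1]\to C[1]$ with $C\in \T$; shifting by $[1]$ and rewriting in the rotated form used in the corollary gives $X[-1]\to C[1]\to D[1]\to X$, so I set $Z'':=C[1]\in \T[1]$ and $Y'':=D[1]$.

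The one non-cosmetic step that remains is to check that $Z',Z''\in \C_{\T}$, i.e.\ that neither carries a nonzero summand from $\T$. This reduces to the claim $\T\cap \T[\pm 1]=0$ on indecomposables, and I expect this small rigidity argument to be the main (if modest) obstacle. I would handle $\T\cap \T[1]=0$ by contradiction: if $W$ is a nonzero indecomposable in $\T\cap \T[1]$, then $W$ and $W[-1]$ both lie in $\T$, and the cotorsion-pair axiom $\Hom_{\C}(\T,\T[1])=0$ for $(\T,\T)$ forces
$$\End_{\C}(W)\;=\;\Hom_{\C}(W,W[-1][1])\;\subseteq\;\Hom_{\C}(\T,\T[1])\;=\;0,$$
contradicting $W\neq 0$. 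The other half $\T\cap \T[-1]=0$ follows by applying $[1]$. With this in hand, every indecomposable summand of $Z'\in \T[-1]$ lies outside $\T$ (and similarly for $Z''\in \T[1]$), which gives $Z',Z''\in \C_{\T}$ and completes the verification.
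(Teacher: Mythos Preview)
Your argument is correct and is essentially the paper's intended proof: shift Lemma~\ref{6-1}(a-2) and (b-2) from $X[1]$ and $X[-1]$ back to $X$, where the corollary's second triangle is indeed just the rotated presentation of the AR~triangle $Z''\to Y''\to X\to Z''[1]$ ending at $X$. Your explicit rigidity computation $\T\cap\T[\pm1]=0$ from $\Hom_\C(\T,\T[1])=0$ is exactly the small step the paper leaves implicit when concluding $Z',Z''\in\C_\T$.
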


We can illustrate our results by the following classic example.

\begin{exm}
Let $Q$ be  the quiver
$3\longrightarrow 2\longrightarrow 1$
and $\C=\C_Q$ be the cluster category of $Q$ \cite{BMRRT}. The Auslander-Reiten quiver of $\C$ is
\begin{align}
	\tiny{\xymatrix @R=3mm @C=5mm{
			&&{\begin{smallmatrix}3\\2\\1\end{smallmatrix}}\ar[rd]&&
			{\begin{smallmatrix}1[1]\end{smallmatrix}}\ar[rd]\ar@{.}[ll]&&
			{\begin{smallmatrix}1\end{smallmatrix}}\ar[rd]\ar@{.}[ll]&&\\
			&{\begin{smallmatrix}2\\1\end{smallmatrix}}\ar[ru]\ar[rd]&&
			{\begin{smallmatrix}3\\2\end{smallmatrix}}\ar[ru]\ar[rd]\ar@{.}[ll]&&
			{\begin{smallmatrix}2\\1\end{smallmatrix}}[1]\ar[ru]\ar[rd]\ar@{.}[ll]&&
			{\begin{smallmatrix}2\\1\end{smallmatrix}}\ar[rd]\ar@{.}[ll]
			\\
			{\begin{smallmatrix}1\end{smallmatrix}}\ar[ru]&&\textnormal{$\begin{smallmatrix}2\end{smallmatrix}$}\ar[ru]\ar@{.}[ll]&&
			{\begin{smallmatrix}3\end{smallmatrix}}\ar[ru]\ar@{.}[ll]&&\textnormal{$\begin{smallmatrix}3\\2\\1\end{smallmatrix}[1]$}\ar[ru]\ar@{.}[ll]&&
			{\begin{smallmatrix}3\\2\\1\end{smallmatrix}}.\ar@{.}[ll]
	}}\notag
\end{align}
The direct sum $T=1[1]\oplus \begin{smallmatrix}2\\1\end{smallmatrix}[1]\oplus \begin{smallmatrix}3\\2\\1\end{smallmatrix}[1]$ gives a cluster tilting object. The Auslander-Reiten quiver of $\C/\T\simeq \mod\End_{\C}(T[-1])$ is the following:
\begin{align}
	\tiny{\xymatrix @R=6mm @C=6mm{
			&&{\begin{smallmatrix}3\\2\\1\end{smallmatrix}}\ar[rd]&&\\
			&{\begin{smallmatrix}2\\1\end{smallmatrix}}\ar[ru]\ar[rd]&&
			{\begin{smallmatrix}2\\3\end{smallmatrix}}\ar[rd]\ar@{.}[ll]&
			\\
			{\begin{smallmatrix}1\end{smallmatrix}}\ar[ru]&&\textnormal{$\begin{smallmatrix}2\end{smallmatrix}$}\ar[ru]\ar@{.}[ll]&&
			{\begin{smallmatrix}3\end{smallmatrix}}.\ar@{.}[ll]
	}}\notag
\end{align}
\end{exm}
\vspace{2mm}

According to \cite[Theorem I.2.4]{RV}, $\C$ has a Serre functor if and only if $\C$ has Auslander-Reiten triangles. Hence by Lemma \ref{6-1} and Corollary \ref{6-2}, we can obtain the following theorem.

\begin{thm}
Let $\C$ be a Krull-Schmidt, Hom-finite, $k$-linear triangulated category. If $\C$ has a cluster tilting object, then $\C$ has a Serre functor.
\end{thm}

%

\vspace{1cm}

\hspace{-4mm}\textbf{Data Availability}\hspace{2mm} Data sharing not applicable to this article as no datasets were generated or analyzed during
the current study.
\vspace{2mm}

\hspace{-4mm}\textbf{Conflict of Interests}\hspace{2mm} The authors declare that they have no conflicts of interest to this work.
\vspace{1cm}

\end{document}